\newtheorem{lemma}{Lemma}
\newtheorem{proposition}{Proposition}
\newtheorem{theorem}{Theorem}
\newcommand{\OM}{\mathop{{\rm\Omega}}\nolimits}
\newcommand{\POM}{\mathop{{\rm P\Omega}}\nolimits}
\newcommand*{\GL}{\mathop{{\rm GL}}\nolimits}
\newcommand*{\SL}{\mathop{{\rm SL}}\nolimits}
\newcommand*{\GO}{\mathop{{\rm GO}}\nolimits}
\newcommand*{\SO}{\mathop{{\rm SO}}\nolimits}
\newcommand*{\CO}{\mathop{{\rm CO}}\nolimits}
\newcommand*{\CSO}{\mathop{{\rm CSO}}\nolimits}
\newcommand*{\SU}{\mathop{{\rm SU}}\nolimits}
\newcommand*{\GU}{\mathop{{\rm GU}}\nolimits}
\newcommand*{\Sp}{\mathop{{\rm Sp}}\nolimits}
\newcommand*{\PSL}{\mathop{{\rm PSL}}\nolimits}
\newcommand*{\PSU}{\mathop{{\rm PSU}}\nolimits}
\newcommand*{\PSp}{\mathop{{\rm PSp}}\nolimits}
\newcommand*{\Sz}{\mathop{{\rm Sz}}\nolimits}
\newcommand*{\ord}{\mathop{{\rm ord}}\nolimits}
\newcommand*{\F}{\mathop{\mathbb{F}}\nolimits}
\newcommand*{\bsm}{\left(\begin{smallmatrix}}
\newcommand*{\esm}{\end{smallmatrix}\right)}
\newcommand*{\bp}{\begin{pmatrix}}
\newcommand*{\ep}{\end{pmatrix}}
\newcommand*{\bpm}{\begin{pmatrix}}
\newcommand*{\epm}{\end{pmatrix}}
\newcommand*{\bbm}{\begin{bmatrix}}
\newcommand*{\ebm}{\end{bmatrix}}
\newcommand*{\ov}{\overline}
\newcommand*{\al}{\alpha}
\newcommand*{\eps}{\varepsilon}
\newcommand*{\ga}{\gamma}
\newcommand*{\lam}{\lambda}
\newcommand*{\Om}{\Omega}
\renewcommand*{\phi}{\varphi}
\begin{document}
\footskip=30pt
\date{}

\title{Simple groups with Brauer trees of principal blocks \\ in the shape of a star}
\author{Andrei Kukharev}

\address{Siberian Federal University, Krasnoyarsk, Russia} 
\email{kukharev.av@mail.ru}

\renewcommand{\baselinestretch}{1.2}

\keywords{Brauer tree, Brauer star, simple group, finite group}.

\begin{abstract}
We have found a list of finite simple groups with cyclic Sylow $p$-subgroup whose principal $p$-blocks have Brauer trees in the shape of a star, that is a tree of diameter at most $2$. Moreover, for an arbitrary finite group $G$ with cyclic Sylow $p$-subgroup, we have obtained a necessary condition when the Brauer tree of the principal $p$-block of $G$ is a star.
\end{abstract}

\maketitle
\pagestyle{plain}


\section*{Introduction}

Let $G$ be a finite group, $p$ be a prime number dividing the order of $G$. Suppose that a Sylow $p$-subgroup of $G$ is cyclic. Then the Brauer graph of a $p$-block of $G$ is uniquely defined. Moreover, this graph is a tree.

Denote by $\mathfrak{X}_p$ the class of finite groups with non-trivial cyclic Sylow $p$-subgroup such that the Brauer tree of the principal $p$-block is a \emph{star}, that is a tree of diameter at most $2$. 
Note that the Brauer graph of a $p$-block with cyclic defect group is a star if and only if every $p$-modular irreducible character of this block lifts to an ordinary irreducible character (see \cite[Lemma 3.1]{WilZal14}).

We are interested in the description of the class $\mathfrak{X}_p$. The problem arises from the work \cite{Blau}, where the author studied the properties of such groups. For instance, he has shown that if $G \in \mathfrak{X}_p$ and $G$ is not a $p$-solvable group, then the star has an even number of edges. A similar problem was studied in \cite{His87}, where the author considered the class $\mathfrak{L}_p$ of groups all whose absolutely irreducible $p$-modular characters are liftable. 

It is known that if $G$ is a $p$-solvable group with cyclic Sylow $p$-subgroup, then Brauer trees of all $p$-blocks are starts, in particularly $G \in \mathfrak{X}_p$. But there exist also nonsolvable groups having this property. For instance, $A_5 \in \mathfrak{X}_3$.

The main goal of this work is to find all simple finite groups with the property $\mathfrak{X}_p$. But some non-simple groups (namely, symmetric and classical groups) also will be covered during our study. 

For simple groups, we have obtained the following result.

\begin{theorem}\label{t-main}
Let $G$ be a finite simple group, and let $p$ be a primer dividing the order of $G$. Then $G \in \mathfrak{X}_p$ if and only if one of the following statements holds.

1) $G= C_p$;

2) $G= \PSL_2(q)$, $p\neq 2$ and $p$ divides $q \pm 1$;

3) $G= \PSL_3(q)$, $p\neq 2$ and $p$ divides $q + 1$;

4) $G= \PSU_3(q^2)$, $p\neq 2$ and $p$ divides $q - 1$;

5) $G= A_5$, $p \in \{3, 5\}$;

6) $G= A_6$ and $p = 5$;

4) $G= \Sz(q)$, $q= 2^{2n+1}$ ($n\geq 1$), where $p \neq 2$ divides $q-1$ or $q+r+1$;

5) $G= {}^2G_2(q^2)$, $q^2= 3^{2n+1}$ ($n\geq 1$), where $p \neq 2$ divides $q^2-1$ or $q^2+\sqrt{3} q+1$;

6) $G \in \{ M_{11}, M_{12}, J_3 \}$ and $p=5$;

7) $G= J_1$ and $p \in \{3, 5\}$.
\end{theorem}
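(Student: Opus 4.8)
The plan is to run the whole argument through the Classification of Finite Simple Groups, so that $G$ is $C_p$, an alternating group $A_n$ ($n\ge 5$), a simple group of Lie type, or a sporadic group (including the Tits group). Membership in $\mathfrak{X}_p$ already presupposes a non-trivial cyclic Sylow $p$-subgroup, so the first step in each family is to determine the pairs $(G,p)$ for which $\Syl_p(G)$ is cyclic; this discards the great majority of cases. For the surviving pairs one has a genuine Brauer tree $T$ of the principal $p$-block $B_0$ and must decide when its diameter is at most $2$. Two general filters will carry much of the load. Since a simple non-abelian group is not $p$-solvable, \cite{Blau} forces a star $T$ to have an even number $l(B_0)$ of edges. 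Moreover, once $l(B_0)$ and the exceptional multiplicity $m$ are computed (so $k(B_0)=l(B_0)+m$), a block with $l(B_0)\le 2$ is automatically a star, and only blocks with $l(B_0)\ge 3$ whose tree is not a claw $K_{1,l(B_0)}$ need to be excluded --- and for those the precise \emph{shape} of $T$ must be extracted from the known classifications.

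The easy parts come first. For $G=C_p$ the unique block has the one-edge star, so $C_p\in\mathfrak{X}_p$. For $G=A_n$ ($n\ge 5$) a Sylow $p$-subgroup is cyclic exactly when $p$ is odd and $p\le n\le 2p-1$, in which case the $p$-core of $(n)$ has weight $1$; the principal $p$-block of $S_n$ is then a weight-$1$ block, whose Brauer tree is a straight line with $p-1$ edges, and one descends to $A_n$ by Clifford theory, the only subtlety being the ``folding'' of the line that occurs when the block is fixed by tensoring with the sign character. Sorting through the resulting trees, together with the exceptional isomorphisms $A_5\cong\PSL_2(4)\cong\PSL_2(5)$, $A_6\cong\PSL_2(9)$ and $A_8\cong\PSL_4(2)$, leaves exactly $A_5$ ($p\in\{3,5\}$) and $A_6$ ($p=5$). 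For the sporadic groups and the Tits group, the Brauer trees of all blocks with cyclic defect group are tabulated (Hiss--Lux), so the argument reduces to a finite inspection of those tables, the outcome being $M_{11},M_{12},J_3$ at $p=5$ and $J_1$ at $p\in\{3,5\}$.

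The heart of the proof is the groups of Lie type. In the defining characteristic a cyclic Sylow $p$-subgroup forces the group to be of untwisted rank $1$ over the prime field, i.e.\ $G=\PSL_2(p)$ with $p\ge 5$; the Brauer tree of the principal $p$-block is then a line (its decomposition numbers are those of the restricted $\SL_2(p)$-modules), hence a star only when it has at most two edges, which happens only for $p=5$, that is $\PSL_2(5)=A_5$. In the cross characteristic one puts $d=d_p(q)$, the multiplicative order of $q$ modulo $p$, so that $p\mid\Phi_d(q)$; cyclicity of $\Syl_p(G)$ means that a single cyclotomic factor $\Phi_d$ is relevant and occurs essentially to the first power, which already restricts $d$ (and hence the families) severely. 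For the low-rank cases one computes $T$ directly from the ordinary and modular character theory: $\PSL_2(q)$ with $p\mid q\pm1$, $\PSL_3(q)$ with $p\mid q+1$, $\PSU_3(q)$ with $p\mid q-1$, and the rank-$1$ groups $\Sz(q)$ and ${}^2G_2(q^2)$; in every one of these the tree turns out to be a star with the exceptional vertex at its centre, which supplies the ``if'' direction for items (2)--(4) and for the Suzuki and Ree cases. For the remaining groups of Lie type one invokes the classification of Brauer trees of unipotent (and then of all) cyclic $p$-blocks --- Fong--Srinivasan for the classical groups, and work of Hiss, L\"ubeck, Malle and others for the exceptional groups --- using that the unipotent characters of $B_0$ span a line of $T$ governed by the relevant relative Weyl group; together with Blau's parity constraint and the contribution of the non-unipotent characters, this yields $l(B_0)\ge 3$ and a tree of diameter $>2$ in every case not already on the list.

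The main obstacle is precisely this last step. There is no uniform closed form for the shape of the Brauer tree of a principal block of a group of Lie type, so one is forced into a case analysis over all types, all twisted forms, and all admissible $d$, each time pulling from the relevant source just enough of the tree to see that its diameter exceeds $2$ (or, in the handful of genuinely small configurations, that it does not). The delicate bookkeeping is to make sure that no configuration with a small relative Weyl group --- where a star could a priori reappear --- slips through; by contrast the cyclic, alternating, sporadic and defining-characteristic parts are comparatively routine.
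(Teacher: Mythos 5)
Your proposal follows essentially the same route as the paper: a CFSG case analysis filtered first by cyclicity of the Sylow $p$-subgroup, then Blau's parity constraint, the line shape of the principal-block tree for classical groups (Fong--Srinivasan, Gow/Robinson for even $q$) combined with lower bounds on the number of edges $|N_G(P)/C_G(P)|$ via the relative Weyl group and explicit embeddings, and the known trees (Hiss--Lux, Hiss--L\"ubeck--Malle, Geck, Burkhardt, Hiss) for the sporadic, exceptional, Suzuki and Ree cases. The only overstatement is that the sporadic case is not purely a table lookup: a few groups (the Baby Monster at $p\in\{13,19\}$ and $Fi_{24}'$ at $p\in\{11,13\}$) are not settled by the Hiss--Lux tables and need a separate argument, which the paper supplies via Blau's involution/centralizer criterion together with an external lemma showing those trees contain a path of length at least $3$.
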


Moreover, for an arbitrary finite group $G$, the following theorem gives a necessary condition when $G \in \mathfrak{X}_p$.

\begin{theorem}\label{t-ext}
Let $G$ be a non-$p$-solvable group with a non-trivial cyclic Sylow $p$-subgroup $P$. Then there exists the smallest normal subgroup $K$ in $G$ properly containing $O_ {p'}(G)$, and the quotient group $L = K / O_{p'}(G) $ is simple non-abelian. Moreover, if $G \in \mathfrak{X}_p$, then $L \in \mathfrak{X}_p$.
\end{theorem}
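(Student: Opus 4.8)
\textbf{Reduction modulo $O_{p'}(G)$.} The plan is to settle the group-theoretic assertions first and the block-theoretic one last. Put $\bar G = G/O_{p'}(G)$. Since $O_{p'}(G)$ is a $p'$-group, a Sylow $p$-subgroup of $\bar G$ is isomorphic to $P$, hence nontrivial and cyclic; moreover $O_{p'}(\bar G) = 1$, and $\bar G$ is again not $p$-solvable (an extension of the $p$-solvable group $O_{p'}(G)$ by a $p$-solvable group would be $p$-solvable). The principal block $B_0(G)$ has $O_{p'}(G)$ in its kernel, so it is the inflation of $B_0(\bar G)$ and has the same Brauer tree; hence $G \in \mathfrak{X}_p$ if and only if $\bar G \in \mathfrak{X}_p$, and the normal subgroups of $G$ properly containing $O_{p'}(G)$ correspond to the nontrivial normal subgroups of $\bar G$. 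So it is enough to prove that $\bar G$ has a smallest nontrivial normal subgroup $L$, that $L$ is simple non-abelian, and that $\bar G \in \mathfrak{X}_p$ implies $L \in \mathfrak{X}_p$; then $K$ is the preimage of $L$ in $G$ and $L = K/O_{p'}(G)$.

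\textbf{Uniqueness and shape of $L$.} A minimal normal subgroup of $\bar G$ cannot be a $p'$-group (else it lies in $O_{p'}(\bar G) = 1$), so $p$ divides its order. Two distinct minimal normal subgroups $N_1 \neq N_2$ would give $N_1 N_2 = N_1 \times N_2$ with $p^2$ dividing its order, forcing $C_p \times C_p \leq \bar G$, which is impossible when a Sylow $p$-subgroup is cyclic. So $\bar G$ has a unique minimal normal subgroup $L$; it is contained in every nontrivial normal subgroup, hence is the smallest one. Writing $L = S_1 \times \cdots \times S_k$ with $S_i \cong S$ simple, we have $p \mid |S|$, and $k \geq 2$ again produces $C_p \times C_p$; so $k = 1$ and $L \cong S$. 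If $S$ were abelian, $L$ would be an elementary abelian $p$-subgroup of the cyclic Sylow $p$-subgroup, whence $L \cong C_p$.

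\textbf{Excluding $L \cong C_p$.} Assume $L \cong C_p$; I would argue by induction on $|\bar G|$. If $C_{\bar G}(L) \neq \bar G$, then $C := C_{\bar G}(L)$ is a proper normal subgroup, $\bar G/C$ embeds into $\mathrm{Aut}(C_p)$ and is a $p'$-group, so $C$ is non-$p$-solvable with $O_{p'}(C) = 1$ (it is characteristic in $C$, normal in $\bar G$, and would contain $L$) and cyclic Sylow $p$-subgroup; by induction the unique minimal normal subgroup of $C$ is simple non-abelian, contradicting that $L$ is a minimal normal subgroup of $C$ of order $p$. If $C_{\bar G}(L) = \bar G$, so $L \leq Z(\bar G)$, then a Schur--Zassenhaus argument (using centrality of $L$) gives $O_{p'}(\bar G/L) = 1$, while $\bar G/L$ is non-$p$-solvable with cyclic Sylow $p$-subgroup — necessarily nontrivial, or $\bar G/L$ would be a $p'$-group. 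By induction $\bar G/L$ has a unique minimal normal subgroup $\bar L$, simple non-abelian, and its preimage $M$ in $\bar G$ is forced — because $L$ is the unique minimal normal subgroup of $\bar G$ — to be perfect with $Z(M) = L$, i.e.\ quasi-simple. But then $M$, and hence $M/Z(M) = \bar L$, has a cyclic Sylow $p$-subgroup, so the $p$-part of the Schur multiplier of $\bar L$ is trivial (restriction to a Sylow $p$-subgroup is injective on it, and cyclic groups have trivial Schur multiplier), whence $p \nmid |Z(M)|$ — contradicting $Z(M) \cong C_p$. Therefore $L$ is simple non-abelian; since $Z(L) = 1$, uniqueness of $L$ gives $C_{\bar G}(L) = 1$, so $L \triangleleft \bar G \leq \mathrm{Aut}(L)$ with $\bar G/L$ solvable by the Schreier conjecture.

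\textbf{The block-theoretic step, and the main obstacle.} Now assume $\bar G \in \mathfrak{X}_p$. Then $L$ has a nontrivial cyclic Sylow $p$-subgroup, and by \cite[Lemma 3.1]{WilZal14} it remains to show that every $\psi \in \mathrm{IBr}(B_0(L))$ lifts to an ordinary irreducible character of $L$. As $\bar G/L$ is solvable, choose a chain $L = H_0 \triangleleft H_1 \triangleleft \cdots \triangleleft H_r = \bar G$ with each $H_{i+1}/H_i$ cyclic of prime order; each $H_i$ has cyclic Sylow $p$-subgroup and $B_0(H_{i+1})$ covers $B_0(H_i)$, so it suffices to prove: if $N \triangleleft H$ have cyclic Sylow $p$-subgroups and $[H:N]$ is prime, then a star Brauer tree for $B_0(H)$ implies a star Brauer tree for $B_0(N)$; iterating down the chain then yields the claim for $B_0(L)$. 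I would establish this via the Clifford theory of blocks with cyclic defect group, separating the case $[H:N] = p$, where the defect group of $B_0(N)$ has index $p$ in that of $B_0(H)$, from the case $[H:N] \neq p$, where $B_0(H)$ and $B_0(N)$ share a cyclic defect group and $H/N$ acts on the Brauer tree of $B_0(N)$ by a graph automorphism, and in each case reading off the Brauer tree of $B_0(N)$ from that of $B_0(H)$. This final step is the main obstacle: one has to control the fusion of the edges in a nontrivial $H/N$-orbit and the location of the exceptional vertex, particularly when $p \mid [H:N]$, and check that the \emph{star} property is passed down. It would help, and I expect it to hold, that in our situation $[\bar G : L]$ is prime to $p$ — an order-$p$ field, graph, or diagonal automorphism of a simple group with cyclic Sylow $p$-subgroup does not combine with it into a cyclic group — which would eliminate the case $[H:N] = p$ altogether.
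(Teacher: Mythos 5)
Your reduction modulo $O_{p'}(G)$ and your proof that $\bar G=G/O_{p'}(G)$ has a unique minimal normal subgroup $L$ which is simple and non-abelian are correct, and in fact more self-contained than the paper, which simply cites \cite[Lemma~6.1]{Naehrig} for uniqueness/simplicity and \cite[Lemma~5.1]{Blau} for the rest; your exclusion of $L\cong C_p$ via quasi-simplicity and the triviality of the $p$-part of the Schur multiplier of a group with cyclic Sylow $p$-subgroup is sound. The genuine gap is the final implication $G\in\mathfrak{X}_p\Rightarrow L\in\mathfrak{X}_p$, which you do not prove: you reduce it to a descent along a chain of prime-index normal subgroups together with an unspecified Clifford-theoretic analysis of how a star-shaped Brauer tree behaves under such a step, and you yourself label this ``the main obstacle.'' In addition, the auxiliary claim you lean on --- that $[\bar G:L]$ is prime to $p$ because an order-$p$ field, graph or diagonal automorphism cannot combine with a cyclic Sylow $p$-subgroup of $L$ into a cyclic group --- is only asserted (``I expect it to hold''); as stated it would require a CFSG-style case analysis of outer automorphism groups, so the block-theoretic half of the theorem is not established by your argument.

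The paper closes both holes at once with two facts you did not use. First, since $G$ is not $p$-solvable, $\bar G$ cannot be a Frobenius group with kernel $P$, and then \cite[Lemma~5.1]{Blau} gives $P\leq L$ directly; hence $p\nmid[\bar G:L]$ with no examination of outer automorphisms. Second, once the index is coprime to $p$, no Clifford theory of trees is needed: by \cite[Lemma~4.2]{Feit} (Lemma~\ref{L-tree} and Proposition~\ref{some_prop}(3) in the paper) the trees $\tau_0(\bar G)$ and $\tau_0(L)$ are \emph{similar}, i.e.\ both arise by winding a common tree around the exceptional vertex, and any tree similar to a star is again a star; combined with Lemma~\ref{e_subgroup} this settles the descent for the whole $p'$-index $[\bar G:L]$ in one step, rather than edge-by-edge along prime-index subgroups. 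To repair your proof, replace your last paragraph by (i) an argument that $P\leq L$ and (ii) the similarity argument just described; without these, the ``moreover'' part of the statement remains unproved.
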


\section{Preliminaries}\label{S-simple}

Recall some basic facts about Brauer trees of finite groups. We refer the reader to  \cite{Benson, Feit} for details.

Let $p$ be a primer dividing the order of a finite group $G$. We will denote by $\mathrm{Irr}(G)$ the set of irreducible ordinary characters of $G$ and by $\mathrm{IBr}_p(G)$ the set of irreducible Brauer ($p$-modular) characters of $G$. The restriction $\chi^o$ of $\chi \in \mathrm{Irr}(G)$ to the set on $p$-regular elements of $G$ can be decomposed as 
$$
\chi^o = \sum\limits_{\phi \in \mathrm{IBr}_p(G)} d_{\chi\phi} \phi.
$$
The coefficients $d_{\chi\phi}$ are called the \emph{decomposition} numbers. They form a decomposition matrix.

The \emph{Brauer graph} of $G$ is an undirected graph, the vertices of which are labelled by elements of $\mathrm{Irr}(G)$, and the edges are labelled by elements of $\mathrm{IBr}(G)$.
Some vertices may be labelled by a few irreductible ordinary characters $\chi_1,..., \chi_m$ if they have the same restriction to the $p$-regular conjugacy classes. We call a such vertice \emph{exceptional} with multiplicity $m$. Two vertices labeled by $\chi, \psi \in \mathrm{Irr}(G)$ are adjacent if there exists $\phi \in \mathrm{IBr}_p(G)$ such that $d_{\chi \phi} \neq 0$ and $d_{\psi \phi} \neq 0$. The connected components of the Brauer graph are called \emph{$p$-blocks} of $G$.

Let $B$ be a $p$-block of $G$ with cyclic defect group. 
Then the Brauer graph corresponding to $B$ is a tree, which we will denote by $\tau(B)$. If $e$ is the number of edges in $\tau(B)$, then $\tau(B)$ has $e+1$ vertices. A $p$-block may have not more then one exceptional vertice (with multiplicity $m>1$).

A vertex of a Brauer graph is called \emph{non-real} if its character has a non-real value for some $p$-regual element. The \emph{real stem} of a Brauer tree is a subtree obtained by removing all non-real vertices. The real stem always has the shape of a straight line (see \cite[p.~3]{H-L}).

\begin{lemma}\cite[p.\,212]{Benson}\label{e}
If a group $G$ has a cyclic Sylow $p$-subgroup $P$, then the number $e$ of edges of Brauer tree of the principal $p$-block of $G$ is equal to $|N_G(P) / C_G(P)|$, and the multiplicity of the exceptional vertice is $m = (|P|-1)/e$.
\end{lemma}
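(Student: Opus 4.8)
The plan is to obtain both formulas as a specialisation of the structure theory of $p$-blocks with cyclic defect group to the principal block. First I would record that, since $P$ is cyclic it is abelian, so $P \le C_G(P)$ and in particular $P\, C_G(P) = C_G(P)$; moreover, being a Sylow $p$-subgroup, $P$ is a defect group of the principal $p$-block $B_0 = B_0(G)$. Thus $B_0$ is a block with cyclic defect group $D = P$, and the Brauer--Dade theory of such blocks applies: the Brauer graph $\tau(B_0)$ is a tree, the number of its edges coincides with the number $l(B_0)$ of irreducible Brauer characters in $B_0$, and this number equals the inertial index $e(B_0)$ of the block.

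The heart of the argument is to compute $e(B_0)$. By definition the inertial index is $e(B_0) = |N_G(P, b) : P\, C_G(P)|$, where $b$ is a root of $B_0$, i.e.\ a block of $C_G(P)$ with $b^G = B_0$, and $N_G(P, b)$ is its stabiliser in $N_G(P)$. By Brauer's third main theorem the root of the principal block of $G$ is the principal block of $C_G(P)$. Since conjugation by any $n \in N_G(P)$ is an automorphism of $C_G(P)$, it fixes the principal block of $C_G(P)$; hence $N_G(P, b) = N_G(P)$. Combining this with $P\, C_G(P) = C_G(P)$ yields
$$
e = e(B_0) = |N_G(P) : C_G(P)| = |N_G(P)/C_G(P)|,
$$
which is the first assertion.

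For the multiplicity I would invoke the fundamental numerical relation of cyclic blocks, namely $e \cdot m = |D| - 1$ for the multiplicity $m$ of the unique exceptional vertex; equivalently, $B_0$ has exactly $e + (|D|-1)/e$ ordinary irreducible characters, while $\tau(B_0)$ has $e+1$ vertices one of which carries $m$ of them. With $D = P$ this gives $m = (|P|-1)/e$ at once. The substantive input here is the deep classification of blocks with cyclic defect group, which I am invoking rather than reproving; within that framework the only genuine step is the identification of the inertial index, where the delicate points are pinning down the root block through Brauer's third main theorem and using the abelianness of $P$ to collapse $P\,C_G(P)$ to $C_G(P)$.
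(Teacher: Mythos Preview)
Your argument is correct. The paper itself does not supply a proof of this lemma; it is stated with a bare citation to \cite[p.\,212]{Benson} and used throughout as a black box. You have unpacked what that citation actually contains: the general Brauer--Dade theory of blocks with cyclic defect group, specialised to the principal block via Brauer's third main theorem (to identify the root $b$ as the principal block of $C_G(P)$, hence $N_G(P,b)=N_G(P)$) and the observation $P\,C_G(P)=C_G(P)$ from the abelianness of $P$. The numerical relation $e\cdot m = |P|-1$ then gives the multiplicity formula. This is exactly the standard derivation one finds behind the cited reference, so there is nothing to correct; your write-up simply makes explicit what the paper imports wholesale.
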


\begin{lemma}\cite[Theorem 1, Corollary 1]{Blau}\label{tree-prop}
Let $G$ be a simple non-abelian group with a non-trivial cyclic Sylow $p$-subgroup $P$. Suppose that the Brauer tree of the principal $p$-block of $G$ is a star with $e$ edges. Then
1) $e$ is even, 
2) if the number $|C_G(P)|$ is odd, then all involutions of $G$ form a unique conjugacy class.
\end{lemma}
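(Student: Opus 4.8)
The plan is to combine the combinatorics of a star with the action of complex conjugation on the Brauer tree, using the ``real stem is a straight line'' fact recalled above together with Lemma~\ref{e}. For the set-up: since $G$ is simple non-abelian it has even order, and a group with a cyclic Sylow $2$-subgroup is $2$-nilpotent by Burnside's normal $p$-complement theorem, hence not simple non-abelian; therefore $p$ is odd. Applying the same theorem to $P$ gives $N_G(P)\neq C_G(P)$, so by Lemma~\ref{e} we have $e=|N_G(P)/C_G(P)|\geq 2$; moreover $e\mid p-1$ and $em=p^a-1$, where $p^a=|P|$. Complex conjugation $\chi\mapsto\overline{\chi}$ preserves decomposition numbers, so it acts as a graph automorphism $\sigma$ of the tree of the principal block, fixing the (real) trivial character. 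Because the centre of the star (the unique vertex of degree $\geq 2$) is $\sigma$-invariant and the trivial character has degree $1<e$, the trivial character must sit at a leaf.

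For part~1 I would argue by parity. The non-real vertices are partitioned by $\sigma$ into conjugate pairs, so their number is even; hence the number $\rho$ of real vertices satisfies $\rho\equiv e+1\pmod 2$. Deleting the non-real vertices leaves the real stem, which is a straight line, so the real vertices form a path and $\rho\leq 3$. Since the trivial leaf is real, $\rho\in\{1,2,3\}$, and $e$ is odd exactly when $\rho=2$; it remains to exclude $\rho=2$. If $m=1$ this is immediate, since then $e=p^a-1$ is even because $p$ is odd. If $m>1$, the exceptional vertex is $\sigma$-invariant (a block has a unique exceptional vertex), so its common $p$-regular restriction is $\sigma$-fixed and it is a real vertex; it cannot be the trivial leaf, so it is the centre. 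One is then left with the configuration in which the centre is exceptional, the trivial character is the only real leaf, and the remaining leaves fall into conjugate pairs. This configuration satisfies every purely block-theoretic constraint, so excluding it is the crux and must invoke the simplicity of $G$.

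The bridge to simplicity, and the tool for part~2, is the reading of $\sigma$ through an involution. For $p$ odd the only involution of $\mathrm{Aut}(P)$ is inversion $x\mapsto x^{-1}$, so a generator $x$ of $P$ is a real element of $G$ if and only if some element of $N_G(P)$ inverts $P$, i.e.\ if and only if the cyclic group $N_G(P)/C_G(P)$ of order $e$ contains an involution, i.e.\ if and only if $e$ is even. When moreover $|C_G(P)|$ is odd, a Sylow $2$-subgroup of $N_G(P)$ embeds into $N_G(P)/C_G(P)$; hence it is cyclic and contains an involution $t$ with $txt^{-1}=x^{-1}$ for all $x\in P$, so that $\langle P,t\rangle\cong D_{2p^a}$ with $p^a$ odd, inside which all involutions are conjugate. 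For part~2 I would then show that every involution of $G$ is $G$-conjugate to $t$. The intended route is a Frobenius--Schur count over the principal block: the $\sigma$-fixed irreducible characters of the block are those on the real stem together with the real exceptional characters, their number is constrained by the star shape and by $|C_G(P)|$ being odd, and the identity $\#\{g:g^2=1\}=\sum_{\chi}\nu(\chi)\chi(1)$, combined with the fact that $t$ inverts a full Sylow $p$-subgroup, should force the involutions into a single class.

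The main obstacle, common to both parts, is precisely this reality book-keeping at the exceptional vertex: turning ``the centre is an exceptional, $\sigma$-fixed vertex whose individual characters may be non-real'' into a bound on the number of involution classes of $G$. Resolving it requires the Frobenius--Schur and real-class analysis of the principal block rather than the tree combinatorics alone, and it is here that the simplicity of $G$ enters, through the existence and fusion of the involution $t$ inverting $P$; the same input simultaneously eliminates the leftover $\rho=2$ configuration in part~1.
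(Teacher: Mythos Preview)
The paper does not give a proof of this lemma at all: it is stated with the citation \cite[Theorem 1, Corollary 1]{Blau} and used as a black box. So there is no ``paper's own proof'' to compare against; the question is whether your outline stands on its own.

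It does not, and you say so yourself. For part~1 you correctly reduce to the configuration in which $m>1$, the exceptional vertex is the (necessarily real) centre, the trivial character is the unique real leaf, and the remaining $e-1$ leaves come in complex-conjugate pairs, forcing $e$ odd. You then write that ``excluding it is the crux and must invoke the simplicity of $G$'', but you never perform that exclusion. The phrases ``the intended route is a Frobenius--Schur count'' and ``should force the involutions into a single class'' are a plan, not an argument: you do not compute $\sum_\chi \nu(\chi)\chi(1)$ over the block, you do not control the Frobenius--Schur indicators of the exceptional characters (which can be $0$ or $\pm1$ individually even when the vertex is $\sigma$-fixed), and you do not explain how the count distinguishes one involution class from several. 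The same unfinished step is, as you note, what is needed to close part~2.

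Concretely, the missing ingredient is the actual content of Blau's paper: a careful analysis linking the reality properties and degrees of the characters in a star-shaped principal block to the involution structure of $G$ (via the Brauer--Suzuki style arguments and the explicit formula for the number of involutions). Your reduction is sound and your identification of the obstacle is accurate, but the proposal stops exactly where the real work begins; as written it is a reformulation of the statement rather than a proof of it.
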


\begin{lemma}\label{e_subgroup}
Suppose that $G$ is a group with cyclic Sylow $p$-subgroup $P$. Suppose that $H$ is a normal subgroup of $G$ such that $(|G/H|,p)=1$. Let $e_G, e_H$ be the numbers of edges of Brauer tree of the principal $p$-block of the groups $G$ and $H$, correspondently. Then $e_H \mid e_G$.
\end{lemma}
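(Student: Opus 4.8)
The plan is to exploit Lemma~\ref{e} together with elementary properties of normalizers and centralizers of Sylow subgroups. Since $(|G/H|,p)=1$, a Sylow $p$-subgroup of $H$ is also a Sylow $p$-subgroup of $G$; so $P\le H$ and $P$ is cyclic as a subgroup of $G$. By Lemma~\ref{e} applied to $G$ and to $H$ (both have cyclic Sylow $p$-subgroup $P$), we have $e_G=|N_G(P)/C_G(P)|$ and $e_H=|N_H(P)/C_H(P)|$, where $N_H(P)=N_G(P)\cap H$ and $C_H(P)=C_G(P)\cap H$. Thus the task reduces to showing that $|N_H(P)/C_H(P)|$ divides $|N_G(P)/C_G(P)|$.

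First I would observe that $N_H(P)=N_G(P)\cap H$ and $C_H(P)=C_G(P)\cap H$ are both normal in $N_G(P)$: indeed $H\norm G$, and $N_G(P)$ and $C_G(P)$ are their own normalizers' overgroups in the obvious way, so intersecting with the normal subgroup $H$ yields subgroups normalized by $N_G(P)$. Hence $N_H(P)/C_H(P)$ is a normal subgroup of $N_G(P)/C_H(P)$. Next I would set up the natural map. Consider the canonical injection $C_G(P)/C_H(P)\hookrightarrow N_G(P)/C_H(P)$ and the quotient $N_G(P)/C_G(P)$. The subgroup $N_H(P)C_G(P)/C_G(P)$ of $N_G(P)/C_G(P)$ is, by the second isomorphism theorem, isomorphic to $N_H(P)/(N_H(P)\cap C_G(P))=N_H(P)/C_H(P)$ (using $N_H(P)\cap C_G(P)=N_G(P)\cap H\cap C_G(P)=C_G(P)\cap H=C_H(P)$). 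Therefore $N_H(P)/C_H(P)$ embeds as a subgroup of $N_G(P)/C_G(P)$, and by Lagrange $e_H=|N_H(P)/C_H(P)|$ divides $|N_G(P)/C_G(P)|=e_G$.

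I do not expect a genuine obstacle here: the only point requiring a little care is the verification that $N_H(P)\cap C_G(P)=C_H(P)$ and that the second-isomorphism-theorem manipulation is legitimate, i.e. that $C_G(P)$ normalizes $N_H(P)$ (which holds since $C_G(P)\le N_G(P)$ and $N_H(P)\norm N_G(P)$). One subtlety worth a sentence in the write-up is that one should make sure $P$ is genuinely a Sylow $p$-subgroup of $H$, not merely a $p$-subgroup, so that Lemma~\ref{e} applies to $H$; this is immediate from $(|G/H|,p)=1$, which forces $|P|$ to equal the full $p$-part of $|H|$. With these remarks in place the divisibility $e_H\mid e_G$ follows.
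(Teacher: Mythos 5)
Your proof is correct, but it takes a different route from the paper. The paper invokes Frattini's argument ($G=HN_G(P)$, valid since $H\norm G$ and $P\in\Syl_p(H)$) and, combining $|N_H(P)|=|H||N_G(P)|/|G|$ with $|C_H(P)|=|C_G(P)||H|/|C_G(P)H|$, obtains the exact relation $e_H=\frac{|C_G(P)H|}{|G|}\,e_G$, so that $e_G/e_H=[G:C_G(P)H]$ is an integer because $C_G(P)H$ is a subgroup ($H$ being normal). You instead embed $N_H(P)/C_H(P)$ into $N_G(P)/C_G(P)$ via the second isomorphism theorem, using $N_H(P)\cap C_G(P)=C_H(P)$, and conclude by Lagrange; for the isomorphism step the fact you actually need is that $N_H(P)$ normalizes $C_G(P)$ (equivalently, $C_G(P)\norm N_H(P)C_G(P)$), which is immediate from $C_G(P)\norm N_G(P)$ — your remark that $C_G(P)$ normalizes $N_H(P)$ is true but is not the relevant hypothesis. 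The trade-off: your argument is slightly more general in that it never uses the Frattini argument, nor indeed the normality of $H$, once Lemma~\ref{e} is available for both groups (normality enters the lemma only through the principal-block interpretation), whereas the paper's computation buys more, namely the precise value of the quotient $e_G/e_H$ as the index $[G:C_G(P)H]$, not just the divisibility.
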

\begin{proof}
Since $P \in Syl_p(H)$, applying Frattini's argument, we obtain
$$e_H = |N_H(P)| / |C_H(P)| = |C_G(P) H|/|G| \cdot e_G,$$
where $C_G(P) H$ is a subgroup of $G$, because $H$ is normal in $G$.
\end{proof}
Let $(\tau, Q)$ be the Brauer tree of a block $B$ with the exceptional vertex $Q$. Then the tree $(\tau,Q)^n$ is obtained by winding up $\tau$ around $Q$ which created $n$ branches, where the original tree is considered as one of these branches. 

We say that the Brauer trees of blocks $B_1$ and $B_2$ are \emph{similar} if there is a tree $(\tau, Q)$ such that $\tau(B_1)= (\tau,Q)^m$ and $\tau(B_2)= (\tau,Q)^n$ for some $m, n \in \mathbb{N}$.

We denote by $B_0(G)$ the principal $p$-block of $G$, by $\tau_0(G)$ the Brauer tree of $B_0(G)$, and by $e_0(G)$ or $e_G$ the number of edges in $\tau_0(G)$.

\begin{lemma}\label{L-tree}
Let $G$ be a group with a cyclic Sylow $p$-subgroup $P$. Let $H$ be a normal subgroup of $G$ of index coprime to $p$. Then:

1) $\tau_0(G)$ is similar to $\tau_0(H)$;

2) If $\tau_0(G)$ is a line, then the same holds true for $\tau_0(H)$.
\end{lemma}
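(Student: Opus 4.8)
The plan is to reduce both statements to the behaviour of Brauer trees under a normal subgroup of index prime to $p$. First I would fix the setup. Since $p\nmid[G:H]$ we have $P\in\Syl_p(H)$, so $B_0(H)$ is a $p$-block of $H$ with the same (cyclic) defect group $P$; being principal it is $G$-invariant, and $B_0(G)$ covers $B_0(H)$. By Lemma~\ref{e_subgroup}, $e_H\mid e_G$; put $n=e_G/e_H$. By Lemma~\ref{e} the exceptional multiplicities satisfy $e_H m_H=|P|-1=e_G m_G$, hence $m_H=n\,m_G$, so the numerical invariants are already consistent with an $n$-fold winding.

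The core step is to show that $\tau_0(G)=(\tau_0(H),Q)^{\,n}$, where $Q$ is the exceptional vertex. This is an instance of the Clifford theory of blocks with cyclic defect groups (see \cite{Feit}): since $B_0(G)$ covers the $G$-invariant block $B_0(H)$ and $[G:H]$ is prime to $p$, restriction of ordinary and modular characters from $G$ to $H$ defines a folding of $\tau_0(G)$ onto $\tau_0(H)$ in which the exceptional vertex plays the role of the winding axis $Q$, while every other vertex and every edge of $\tau_0(H)$ is covered by exactly $n$ vertices, respectively edges, of $\tau_0(G)$ -- precisely the combinatorics of an $n$-fold winding around $Q$. Granting this, part~1) is immediate: taking $(\tau,Q)=(\tau_0(H),Q)$ we get $\tau_0(H)=(\tau,Q)^{1}$ and $\tau_0(G)=(\tau,Q)^{n}$, so $\tau_0(G)$ and $\tau_0(H)$ are similar in the sense defined above. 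The main obstacle, as I see it, is establishing this folding rigorously; if one does not wish to quote it wholesale, it can be assembled from the explicit Clifford-theoretic description of $\mathrm{Irr}$, $\mathrm{IBr}_p$ and the decomposition numbers of the two blocks, handling first the case where $[G:H]$ is prime and then inducting along a subnormal series from $H$ up to $G$, the successive windings composing because the exceptional vertex survives at each stage.

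Finally, part~2) follows from the winding description by an elementary graph-theoretic argument. Suppose $\tau_0(G)=(\tau_0(H),Q)^{\,n}$ is a line. Since $P\neq1$ we have $e_H\geq1$, so $\tau_0(H)$ has at least one edge; being a tree on at least two vertices it is connected, so $Q$ has degree at least $1$ in $\tau_0(H)$, and hence degree at least $n$ in $(\tau_0(H),Q)^{\,n}=\tau_0(G)$. A line has all vertices of degree at most $2$, so $n\leq2$. If $n=1$ then $\tau_0(H)=(\tau_0(H),Q)^{1}=\tau_0(G)$ is a line. If $n=2$ then $(\tau_0(H),Q)^{2}$ is obtained by gluing two copies of $\tau_0(H)$ along $Q$, and this is a line only when $\tau_0(H)$ is itself a line with $Q$ as one of its two endpoints; in particular $\tau_0(H)$ is a line. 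These two cases exhaust the possibilities.
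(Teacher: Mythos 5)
Your proposal is correct and follows essentially the same route as the paper: part 1) rests on the same covering result for blocks of a normal subgroup of $p'$-index (the paper simply cites \cite[Lemma 4.2]{Feit}, while you state it in the sharper form $\tau_0(G)=(\tau_0(H),Q)^{\,n}$ with $n=e_G/e_H$), and part 2) is then elementary combinatorics of windings. Your degree-of-$Q$ argument for 2) is a clean variant of the paper's parity case analysis (which works directly from the similarity relation together with $e_H\mid e_G$), but it is not a genuinely different approach.
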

\begin{proof}
The first part of the lemma follows from \cite[Lemma 4.2]{Feit}.

Suppose that  $\tau_0(G)$ is a line with $e_G$ edges. Since $\tau_0(G)$ is similar to $\tau_0(H)$, we have that $\tau_0(G)= \tau^m$ and $\tau_0(H)= \tau^n$ for some tree $\tau$.
If $e_G$ is odd, then $\tau$ coincides with $\tau_0(G)$. It follows from $e_H\leq e_G$ that $\tau_0(H)=\tau$.

If $e_G$ is even, then either $\tau_0(G)=\tau$ or $\tau_0(G)=\tau^2$. In both cases, we obtain that $\tau_0(H)$ is a line.

\end{proof}

The following Lemma describes some simple properties of the class $\mathfrak{X}_p$.

\begin{proposition}\label{some_prop}
Let $G$ be a finite group, and $P$ be a Sylow $p$-subgroup of $G$.

1) If $G$ is $p$-solvable and $P$ is cyclic, then $G \in \mathfrak{X}_p$.

2) $G \in \mathfrak{X}_p$ if and only if $G / O_{p'}(G) \in \mathfrak{X}_p$.

3) Suppose that $H$ is a normal subgroup of $G$ such that $|G/H|$ is coprime to $p$. If $G \in \mathfrak{X}_p$, then $H \in \mathfrak{X}_p$.
\end{proposition}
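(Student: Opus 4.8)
The plan is to treat the three items separately, in each case reducing to facts already recorded above or to standard block theory; throughout I assume $P \neq 1$, as otherwise every assertion is vacuous. For part 1, I would invoke the Fong--Swan theorem: in a $p$-solvable group every irreducible Brauer character is the restriction of an ordinary irreducible character, so in particular every Brauer character lying in $B_0(G)$ lifts. When $P$ is cyclic, $B_0(G)$ has cyclic defect group, so the criterion quoted in the Introduction (\cite[Lemma 3.1]{WilZal14}) applies and shows that $\tau_0(G)$ is a star, i.e. $G \in \mathfrak{X}_p$.

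For part 2, set $N = O_{p'}(G)$. As $N$ is a $p'$-group, $P$ maps isomorphically onto a Sylow $p$-subgroup of $G/N$, so the property ``non-trivial cyclic Sylow $p$-subgroup'' holds for $G$ precisely when it holds for $G/N$; it therefore suffices to show $\tau_0(G) = \tau_0(G/N)$. I would argue this via the classical observation that the $G$-stable central idempotent $e = |N|^{-1}\sum_{n \in N} n$ of $kN$ is central in $kG$, that $kGe \cong k[G/N]$, and that $B_0(kG)$ --- which contains the trivial module, inflated from $G/N$ --- is the summand of $kGe$ corresponding to $B_0(k[G/N])$; on the ordinary side, every $\chi \in \mathrm{Irr}(B_0(G))$ lies over $1_N$ (since $B_0(G)$ covers $B_0(N) = \{1_N\}$, $N$ being a $p'$-group) and hence is inflated from $G/N$. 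Thus inflation is a bijection between $B_0(G/N)$ and $B_0(G)$ preserving the decomposition matrix, so the Brauer trees coincide and $G \in \mathfrak{X}_p \iff G/N \in \mathfrak{X}_p$.

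For part 3, coprimality of $|G/H|$ to $p$ forces $P \le H$, so $P$ is a non-trivial cyclic Sylow $p$-subgroup of $H$ and both hypothesis and conclusion are meaningful. By Lemma \ref{L-tree}(1) the trees $\tau_0(G)$ and $\tau_0(H)$ are similar, say $\tau_0(G) = (\tau, Q)^m$ and $\tau_0(H) = (\tau, Q)^n$; by Lemma \ref{e_subgroup}, $e_H \mid e_G$, in particular $e_H \le e_G$. Assuming $\tau_0(G)$ is a star, I would split on the position of $Q$ in it. If $Q$ is the centre (the only case with anything to prove is $e_G \ge 2$), then, since removing the centre of a star leaves only isolated vertices, each of the $m$ branches at $Q$ is again a star centred at $Q$; hence so is $\tau$, and then $(\tau, Q)^n$ is a star. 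If $Q$ is a leaf, then the degree of $Q$ in $\tau_0(G) = (\tau, Q)^m$ equals $m$ times its degree in $\tau$, which is at least $1$, so $m = 1$ and $\tau = \tau_0(G)$; hence $\tau_0(H) = \tau^n$ has $n\,e_G$ edges, and $e_H \le e_G$ gives $n = 1$, so $\tau_0(H) = \tau_0(G)$. In every case $\tau_0(H)$ is a star, so $H \in \mathfrak{X}_p$.

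The only genuinely delicate part is the third; there the point to be careful about is that the inequality $e_H \le e_G$ from Lemma \ref{e_subgroup}, combined with the shape of $\tau_0(G)$, really does exclude the ``bad'' windings --- for instance several $2$-edge paths glued at a common leaf, a tree of diameter $4$ --- and this is exactly where the edge-count restriction is used. Parts 1 and 2 are essentially bookkeeping on top of standard results: the only thing that needs verification is that membership in $\mathfrak{X}_p$ is insensitive to passing modulo $O_{p'}(G)$, which the central-idempotent argument settles cleanly.
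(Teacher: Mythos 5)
Your proposal is correct and follows essentially the same route as the paper: part 3 rests on exactly the two lemmas (similarity of $\tau_0(G)$ and $\tau_0(H)$, and $e_H \mid e_G$) that the paper invokes, with the winding/case analysis merely spelled out, and part 2 is the detailed version of the paper's observation that the kernel of $B_0(G)$ is $O_{p'}(G)$. The only cosmetic difference is in part 1, where you derive the star shape from Fong--Swan together with the lifting criterion of \cite[Lemma 3.1]{WilZal14}, whereas the paper cites the $p$-solvable Brauer-tree result of Feit directly; the two are equivalent.
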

\begin{proof}
1) If $G$ is a $p$-solvable group, then according to \cite[Lemma X.4.2]{Feit}, the Brauer tree of any $p$-block of $G$ with a cyclic defect group is a star.

2) The second statement holds because the kernel of the principal $p$-block of $G$ is equal to $O_{p'}(G)$.

3) The third statement follows from Lemmas \ref{L-tree} and \ref{e_subgroup}. 
\label{L-tree}

\end{proof}

\begin{proof}[Proof of Theorem 2]

Let $G$ be a non-$p$-solvable group with a non-trivial cyclic Sylow $p$-subgroup $P$. Let $H = G / O_{p'}(G)$.
Then, by \cite[Lemma 6.1]{Naehrig}, there is a unique minimal normal subgroup $L$ in $H$, and $L$ is simple.

Denote by $K$ a subgroup of $G$ such that $L \cong K/O_{p'}(G)$. Then $K$ is normal in $G$, and $K$ containing $O_ {p'}(G)$ properly, because $L \neq 1$.

Since $G$ is not $p$-solvable, the subgroup $H$ can not be a Frobenius group with kernel $P$. Thus, according to \cite[Lemma~5.1]{Blau}, $L$ contains $P$, hence $p$ doesn't divide $|G/K|$. It gives that $K$ is not abelian.

If $G \in \mathfrak{X}_p$, then using Proposition \ref{some_prop} we obtain that $K \in \mathfrak{X}_p$ and $L \in \mathfrak{X}_p$.
\end{proof}

The proof of Theorem \ref{t-main} is based on the classification of simple finite groups \cite{atlas}. We will consequentially consider cyclic groups, alternating groups, classical groups, exceptional groups of Lie type and sporadic groups.

\section{Cyclic, symmetric and alternating groups}

For cyclic groups, the result is simple. 
\begin{proposition}
$C_p \in \mathfrak{X}_p$ for any prime $p$.
\end{proposition}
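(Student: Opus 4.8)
The plan is to observe that $C_p$ falls immediately under the $p$-solvable case already handled. Concretely, a Sylow $p$-subgroup of $C_p$ is $C_p$ itself, which is cyclic and non-trivial, and $C_p$ is abelian, hence $p$-solvable. Therefore Proposition~\ref{some_prop}(1) applies verbatim and yields $C_p \in \mathfrak{X}_p$. This is the shortest route and requires no computation beyond citing the preliminaries.

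If one prefers an explicit verification rather than an appeal to the general $p$-solvable statement, I would instead compute the Brauer tree directly. Since $C_p$ is a $p$-group, it has a single $p$-block, the principal one, with defect group $P = C_p$. By Lemma~\ref{e}, the number of edges is
$$
e = |N_{C_p}(P) / C_{C_p}(P)| = |C_p / C_p| = 1,
$$
so $\tau_0(C_p)$ consists of a single edge joining the trivial character to an exceptional vertex of multiplicity $m = (|P|-1)/e = p-1$. A tree with one edge has diameter $1 \le 2$, hence it is a star, and $C_p \in \mathfrak{X}_p$.

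There is essentially no obstacle here: the statement is a base case recorded for completeness before the substantive analysis of alternating, classical, exceptional and sporadic groups begins. The only thing to be careful about is the hypothesis that $p$ actually divides $|G|$ (needed for the Brauer tree to be defined), which is automatic since $|C_p| = p$.
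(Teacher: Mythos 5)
Your first route (abelian $\Rightarrow$ $p$-solvable, then apply Proposition \ref{some_prop}(1)) is exactly the paper's own argument, so the proposal is correct and essentially identical in approach; the explicit one-edge Brauer tree computation you add is a correct but unnecessary bonus.
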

\begin{proof}
The statement holds because each abelian group is solvable, i.e. it is $p$-solvable for any $p$ dividing the order of the group.
\end{proof}

Note that by definition the trivial group is not in $\mathfrak{X}_p$.

For symmetric and alternating groups, we have the following result.

\begin{proposition}

1) $S_n \in \mathfrak{X}_p$ if and only if $p=2$ and $n \in \{2,3\}$, or $p=3$ and $n \in \{3,4,5\}$.

2) $A_n \in \mathfrak{X}_p$ if and only if $p=3$ and $n \in \{3,4,5\}$, or $p=5$ and $n \in \{5,6\}$.
\end{proposition}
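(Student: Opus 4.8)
The plan is to reduce to a Sylow $p$-subgroup of order exactly $p$, compute the number $e$ of edges of the principal Brauer tree via Lemma~\ref{e}, and then use that this tree is always a straight line, so that it is a star precisely when $e\le 2$.

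\emph{Step 1: reducing $n$.} If $n\ge 2p$, then $S_n$ (and, for odd $p$, $A_n$) contains two disjoint $p$-cycles, so its Sylow $p$-subgroup contains $C_p\times C_p$ and is not cyclic; if $n<p$, then $p\nmid|S_n|$. Hence $S_n$, and for odd $p$ also $A_n$, has a non-trivial cyclic Sylow $p$-subgroup if and only if $p\le n\le 2p-1$, in which case $P\cong C_p$ (a $p$-cycle is an even permutation, so $P\le A_n$ for odd $p$). For $p=2$ this restricts $S_n$ to $n\in\{2,3\}$, while $A_n$ has no non-trivial cyclic Sylow $2$-subgroup at all (a Sylow $2$-subgroup of $A_n$ is trivial for $n\le 3$ and contains a Klein four group for $n\ge 4$); this accounts for the absence of $p=2$ in part~2. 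From now on $p\le n\le 2p-1$, with $p$ odd when $G=A_n$.

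\emph{Step 2: the number of edges.} An element of $S_n$ centralizing the $p$-cycle $\sigma=(1\,2\cdots p)$ acts on $\{1,\dots,p\}$ as a power of $\sigma$ and arbitrarily on the remaining $n-p$ points, while $N_{S_p}(\langle\sigma\rangle)$ is the Frobenius group $C_p\rtimes C_{p-1}$; thus $C_{S_n}(P)=P\times S_{n-p}$ and $N_{S_n}(P)=(P\rtimes C_{p-1})\times S_{n-p}$, and Lemma~\ref{e} gives $e_{S_n}=p-1$ with trivial exceptional multiplicity. For $A_n$ (so $p$ odd) one intersects with $A_n$: the generator of the factor $C_{p-1}$ acts on $\{1,\dots,p\}$ as multiplication by a primitive root modulo $p$, that is, it fixes one point and moves the remaining $p-1$ in a single $(p-1)$-cycle, hence is an \emph{odd} permutation since $p$ is odd. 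A parity count then gives $e_{A_n}=p-1$ when $n-p\ge 2$ (both $C_{p-1}$ and $S_{n-p}$ contain odd permutations, so normalizer and centralizer each drop by a factor $2$) and $e_{A_n}=(p-1)/2$ when $n-p\in\{0,1\}$ (only the normalizer drops).

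\emph{Step 3: the tree is a line, and conclusion.} Every irreducible character of $S_n$ is real, so no vertex of $\tau_0(S_n)$ is non-real; hence $\tau_0(S_n)$ equals its real stem, which is a straight line. For odd $p$, $A_n\trianglelefteq S_n$ has index $2$ coprime to $p$, so $\tau_0(A_n)$ is a straight line by Lemma~\ref{L-tree}(2). A straight line is a star exactly when it has at most two edges, so by Step~2: $\tau_0(S_n)$ is a star iff $p-1\le 2$, i.e.\ $p\in\{2,3\}$; and $\tau_0(A_n)$ is a star iff $e_{A_n}\le 2$, i.e.\ (with $p$ odd) $p=3$, or $p=5$ with $n-p\in\{0,1\}$. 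Intersecting with the range $p\le n\le 2p-1$ of Step~1 gives exactly $p=2,\ n\in\{2,3\}$ or $p=3,\ n\in\{3,4,5\}$ for $S_n$, and $p=3,\ n\in\{3,4,5\}$ or $p=5,\ n\in\{5,6\}$ for $A_n$. The only slightly delicate point is the parity bookkeeping of Step~2, in particular recognizing that the torus-normalizer factor $C_{p-1}$ meets $A_n$ in index $2$; once the tree is known to be a line, the rest is immediate. (The solvable groups among those considered, namely $S_2,S_3,S_4,A_3,A_4$, could alternatively be disposed of at once by Proposition~\ref{some_prop}(1).)
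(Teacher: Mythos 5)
Your proof is correct, and for part 1 it is essentially the argument of the paper: all ordinary characters of $S_n$ are real, so $\tau_0(S_n)$ coincides with its real stem and is a line, and the star condition reduces to $e\le 2$ (the paper gets $e=|P|-1$ from the absence of an exceptional vertex, you get it equivalently from $N_{S_n}(P)/C_{S_n}(P)\cong C_{p-1}$ via Lemma~\ref{e}; same content). For part 2 you genuinely deviate in a useful way: the paper simply quotes Hiss \cite[p.\,282]{His87} for the values $e_{A_n}=p-1$ (when $n-p\ge 2$) and $(p-1)/2$ (when $n-p\le 1$), and leaves the shape of $\tau_0(A_n)$ implicit, whereas you recover these edge numbers by an elementary parity count in $N_{S_n}(P)=(C_p\rtimes C_{p-1})\times S_{n-p}$ (correctly noting that the generator of $C_{p-1}$ is an odd permutation) and then deduce that $\tau_0(A_n)$ is a line from $\tau_0(S_n)$ via Lemma~\ref{L-tree}(2), since $[S_n:A_n]=2$ is prime to the odd $p$. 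This makes the $A_n$ case self-contained and in fact closes a small gap the paper papers over by citation: knowing $e$ alone does not rule out a star when $e\ge 3$, and your line argument is exactly what is needed there. The boundary bookkeeping ($p\le n\le 2p-1$, triviality of cyclic Sylow $2$-subgroups of $A_n$, $m=(|P|-1)/e=1$) is all accurate.
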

\begin{proof}
1) Let $G = S_n$ ($n \geq 2$). A sylow $p$-subgroup $P$ of $G$ is cyclic if and only if $p \leq n < 2p$ (or $n/2 < p \leq n$). Suppose that $P$ is cyclic. Since all ordinary irreducible characters of $S_n$ are real, they lay on the real steam which is a line, and there are no exceptional characters (i.e. multiplicity $m=1$). Thus, the number of edges in $\tau_0(G)$ is $e = |P| - 1$. This tree is a star if and only if $e \leq 2$. This holds if and only if $|P|=3$ and $n \in \{ 3,4,5 \}$, or $|P|=2$ and $n \in \{ 2,3 \}$.

2) For $G = A_n$ ($n \geq 3$), the number of edges in $\tau_0(G)$ is $e=p-1$ if $n/2 < p < n-1$, and $e=(p-1)/2$ if $p \in \{n-1, n\}$ (see \cite[p.282]{His87}). It gives desired.

\end{proof}

\section{Sporadic groups}

\begin{proposition}
Let $G$ be one of the sporadic groups. Then $G \in \mathfrak{X}_p$ if and only if one of the following statements holds.

1) $G=M_{11}$ and $p=5$;

2) $G=M_{23}$ and $p=5$;

3) $G=J_1$ and $p \in \{3,5\}$;

4) $G=J_3$ and $p = 5$.
\end{proposition}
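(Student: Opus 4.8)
The plan is to examine the $26$ sporadic groups one at a time. No finite non-abelian simple group has a cyclic Sylow $2$-subgroup (a standard transfer-theoretic fact), and for odd $p$ a Sylow $p$-subgroup of a sporadic group is cyclic exactly when $p^{2}\nmid|G|$; hence it suffices to consider, for each sporadic $G$, the finitely many odd primes $p$ with $p\mid|G|$ and $p^{2}\nmid|G|$, which are read off from the order factorisations in \cite{atlas}. For every such pair $P:=\Syl_{p}(G)$ is cyclic of order $p$.

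For a fixed admissible pair $(G,p)$ I would first determine $e=e_{0}(G)=|N_{G}(P):C_{G}(P)|$, the number of edges of $\tau_{0}(G)$, from the Atlas data on the normaliser of a subgroup of order $p$ (Lemma \ref{e}); recall $e\mid p-1$. Two remarks then dispose of most pairs without further work. If $e$ is odd, then $\tau_{0}(G)$ cannot be a star, because $G$ is simple and non-abelian (Lemma \ref{tree-prop}, part 1), so $G\notin\mathfrak{X}_{p}$. If $e\le 2$, then $\tau_{0}(G)$ is a tree with at most two edges and therefore has diameter at most $2$ automatically, so $G\in\mathfrak{X}_{p}$; in particular this settles the pair $(J_{1},3)$, $J_{1}$ being the only sporadic group with a cyclic Sylow $3$-subgroup (there $e\mid 2$, and $e=1$ is impossible by Burnside). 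After these reductions only finitely many pairs remain, precisely those with $e$ even and $e\ge 4$.

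For each of those remaining pairs I would appeal to the explicit determination of the Brauer tree of the principal $p$-block of the relevant sporadic group, as collected in \cite{H-L} and completed in subsequent work, and simply check whether the tree has diameter at most $2$; equivalently, by \cite[Lemma~3.1]{WilZal14}, whether every irreducible $p$-modular character of $B_{0}(G)$ lifts to $\mathrm{Irr}(G)$. Carrying this out, together with the pairs already settled, yields exactly the list in the statement. The main difficulty is not conceptual but a matter of completeness and bookkeeping: one must enumerate the admissible primes for all $26$ groups without omission, and for the larger groups -- the Monster, the Baby Monster, the Fischer groups, $Th$, $Ly$ and others, where several primes $p$ satisfy $p\mid|G|$, $p^{2}\nmid|G|$ -- one must rely on the (by now complete) literature determining these Brauer trees, parts of which required substantial computation.
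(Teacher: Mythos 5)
Your overall strategy is essentially the paper's: restrict to odd $p$ with $P$ cyclic of order $p$, compute $e=|N_G(P):C_G(P)|$ from Atlas-type local data (Lemma \ref{e}), throw out the pairs with $e$ odd by Blau's parity result (Lemma \ref{tree-prop}), accept the pairs with $e\le 2$ outright, and settle the remaining pairs with $e\ge 4$ even by inspecting the known Brauer trees in \cite{H-L} (or decomposition matrices \cite{Bre}). One small unproved step: the blanket claim that for odd $p$ a Sylow $p$-subgroup of a sporadic group is cyclic exactly when $p^{2}\nmid|G|$ is true, but it is itself a case-by-case Atlas check (one must exclude cyclic Sylow subgroups of order $p^{2}$ for the primes with $p^{2}\,\|\,|G|$, e.g.\ $p=3$ for $M_{11}$, $M_{23}$, $HS$); this is harmless, since cyclicity can be read off directly, but it should not be presented as a standard fact.

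The genuine gap is your final step, ``rely on the (by now complete) literature determining these Brauer trees.'' That is exactly the point where the paper cannot simply cite \cite{H-L}: for the Baby Monster with $p\in\{13,19\}$ and for $Fi_{24}'$ with $p\in\{11,13\}$ the trees of the principal blocks are not available there, and the paper instead invokes \cite[Lemma 5.1]{K-P-sporadic}, which shows directly that in these cases $\tau_0(G)$ contains a path of length at least $3$ --- enough to exclude a star without knowing the full tree. As written, your plan stalls on precisely these pairs unless you either supply such a partial argument (a path of length $3$, or a non-liftable Brauer character via \cite[Lemma 3.1]{WilZal14}) or document an actual determination of those trees, which you have not done and which should not be assumed. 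Relatedly, the ``bookkeeping'' you defer is where the real content (and risk of error) lies: the star candidates at $p=5$ with $e=4$ must each be checked against the explicit tree, and indeed the paper itself is not internally consistent here (Theorem \ref{t-main} lists $M_{12}$ where this proposition lists $M_{23}$), so a proof that merely promises to consult the literature for these cases has not yet discharged the decisive verifications.
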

\begin{proof}
Brauer trees of sporadic groups for most values of $p$ can be found in \cite{H-L}. Some trees can be also easily constructed from decomposition matrices \cite{Bre}. Also, using information about the orders of the centralizer $C_G(P)$ and normalizer $N_G(P)$ of a Sylow $p$-subgroup $P$ of $G$, it is easy to show with Lemma \ref{tree-prop} that $\tau_0(G)$ is not a star for most sporadic groups.

For instance, consider the Mathieu group $G = M_{23}$ of the order $2^7 \cdot 3^2 \cdot 5 \cdot 7 \cdot 11 \cdot 23$. For $p \in \{2,3\}$, a Sylow $p$-subgroup $P$ of $G$ is not cyclic. The Brauer tree of the principal $5$-block is a star, according to \cite{H-L}. For $p \in  \{7,23\}$, the order $|N_G(P)/C_G(P)|$ is odd, hence $G \notin \mathfrak{X}_p$. For $p=11$, the order of $C_G(P)$ is odd and $G$ has more than one conjugacy class of involutions, and  $G \notin \mathfrak{X}_p$ by Lemma \ref{tree-prop}. Other groups are analysed the same way. Only two groups, Baby Monster $BM$ for $p \in \{13,19\}$ and Fisher group $Fi_{24}'$ for $p \in \{11,13\}$, are required special consideration. In \cite[Lemma 5.1]{K-P-sporadic}, it has been showed that in these cases the graph $\tau_0(G)$ contains a path of length at least $3$.
\end{proof}

\section{Classical groups}

In this section, we will consider Brauer trees of finite classical groups: linear, symplectic, unitary, and orthogonal.

\begin{lemma}\label{2-sylow}
Let $G$ be a finite group which is not solvable. Then a Sylow $2$-subgroup of $G$ is not cyclic.
\end{lemma}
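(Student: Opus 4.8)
The plan is to prove the contrapositive: if a Sylow $2$-subgroup $P$ of $G$ is cyclic, then $G$ is solvable. The main tool is Burnside's normal $p$-complement theorem applied with $p=2$, together with the Feit--Thompson odd-order theorem for the tail.

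First I would record the elementary fact that for a cyclic $2$-group $P$ the automorphism group $\mathrm{Aut}(P)$ is itself a $2$-group (it is trivial when $|P|\le 2$ and of order $|P|/2$ otherwise). Since $P$ is abelian we have $P\le C_G(P)\le N_G(P)$, and because $P\in\Syl_2(G)$ it is a Sylow $2$-subgroup of both $C_G(P)$ and $N_G(P)$; hence the index $[N_G(P):C_G(P)]$ is odd. On the other hand $N_G(P)/C_G(P)$ embeds into $\mathrm{Aut}(P)$, so its order is a power of $2$. These two facts force $N_G(P)=C_G(P)$, i.e. $P\le Z(N_G(P))$.

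Burnside's normal $p$-complement theorem then provides a normal subgroup $N\trianglelefteq G$ of odd order with $G=PN$, so that $G/N\cong P$. Now $G/N$ is a $2$-group, hence solvable, and $N$ is solvable by the Feit--Thompson theorem; an extension of a solvable group by a solvable group is solvable, so $G$ is solvable. The contrapositive is precisely the statement of the lemma. (If one wishes to include groups of odd order in the discussion, there $P=1$ is cyclic and solvability is again immediate from Feit--Thompson, with $N=G$.)

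I do not expect any real obstacle here: the argument is the classical Burnside transfer computation, and the only genuinely deep input is the odd-order theorem. That input seems unavoidable, since the assertion would fail for any hypothetical non-solvable group of odd order, whose Sylow $2$-subgroup is the (cyclic) trivial group; so the lemma is, in effect, a packaging of Burnside's theorem together with Feit--Thompson.
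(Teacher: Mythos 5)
Your proof is correct and follows essentially the same route as the paper: cyclic Sylow $2$-subgroup forces $2$-nilpotency (the paper cites Huppert, Satz IV.2.8, where you spell out the Burnside transfer argument via $N_G(P)=C_G(P)$), and then the odd-order normal complement is solvable by Feit--Thompson, so $G$ is solvable. The only difference is that you prove the normal $2$-complement step explicitly instead of citing it, which is a harmless (indeed slightly more self-contained) variation.
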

\begin{proof}
Let $G$ is a non-solvable group. Then by the Feit-Thompson theorem, $2$ divides $|G|$.

Suppose that a Sylow $2$-subgroup $P$ of $G$ is cyclic. Then according to \cite[Satz IV.2.8]{Huppert}, $G$ is $2$-nilpotent, i.e. $G$ has a normal subgroup $H$ such that $G/H \cong P$. It gives a contradiction.

\end{proof}

\begin{lemma}\label{p_div_q}\cite[Proposition 5.1]{S-W}
Let $G$ be a simple finite group of Lie type over $\mathbb{F}_q$, which is not isomorphic $\mathrm{PSL}_2(q)$. If $p \mid q$, then Sylow $p$-subgroup of $G$ is not abelian.
\end{lemma}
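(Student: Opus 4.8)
The plan is to reduce the statement to the structure of a Sylow $p$-subgroup in the defining characteristic and then run a short case analysis over the families of simple groups of Lie type. Write $q = p^f$. First I would invoke the standard description of a Sylow $p$-subgroup $P$ of $G$: it is conjugate to the unipotent radical $U$ of a Borel subgroup $B = UT$, and $|P| = |U| = q^N$, where $N$ is the number of positive roots of the relevant (relative) root system. Since the center of the simply connected cover is a $p'$-group in defining characteristic, passing between $G$ and its universal or adjoint version does not change the isomorphism type of $P$; thus $P$ is abelian if and only if $U$ is, and it suffices to decide when $U$ is abelian.

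For the untwisted Chevalley groups I would use that $U = \prod_{\al > 0} U_\al$ with $U_\al \cong (\F_q, +)$, together with the Chevalley commutator relations $[U_\al, U_\be] \leq \prod_{i, j \geq 1} U_{i\al + j\be}$. Hence $U$ is abelian precisely when no sum of two positive roots is again a root; among irreducible root systems this forces rank $1$, i.e.\ type $A_1$, so $G \cong \PSL_2(q)$, which is excluded by hypothesis. In every other untwisted type there exist simple roots $\al, \be$ with $\al + \be$ a root, so $[U_\al, U_\be] \neq 1$ and $U$ is non-abelian.

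Next I would treat the twisted groups. For the twisted types of relative rank at least $2$ --- the groups ${}^2A_n$ with $n \geq 3$, ${}^2D_n$, ${}^3D_4$, ${}^2E_6$, ${}^2F_4$ --- the same commutator bookkeeping in the relative root system (or simply the presence of a non-abelian rank-two unipotent subgroup) shows $U$ is non-abelian. That leaves the rank-one twisted families --- the unitary groups $\PSU_3$, the Suzuki groups $\Sz$, and the Ree groups ${}^2G_2$ --- for which the Sylow $p$-subgroup $U$ has a well-known explicit form as a non-abelian group (with center of order equal to the size of the defining field); the degenerate small members, which are solvable or non-simple, are ruled out by the simplicity hypothesis. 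Collecting the cases, $U$ is abelian only when $G \cong \PSL_2(q)$; since that group is excluded, $P \cong U$ is non-abelian in every remaining case, as claimed.

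The main obstacle is keeping the case division uniform and short rather than re-deriving the commutator relations family by family. The economical observation is that abelianness of $U$ can only happen in very small (relative) rank, so the whole argument collapses to three headings: rank-one untwisted (type $A_1$ --- excluded); rank-one twisted (${}^2A_2$, ${}^2B_2$, ${}^2G_2$ --- non-abelian $U$ with center of order $q$); and everything of rank $\geq 2$ (non-abelian via a single rank-two sub-configuration). The rank-one twisted groups are the delicate point, since there the relative root system has only one positive root and the ``sum of two positive roots'' argument is unavailable, so one must fall back on the explicit Sylow $p$-subgroup structure; this is exactly the verification carried out in \cite[Proposition 5.1]{S-W}.
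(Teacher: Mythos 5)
Your proposal is correct in substance, but note that the paper does not prove this lemma at all: it is quoted verbatim from \cite[Proposition 5.1]{S-W}, so you are supplying a self-contained argument where the paper relies on a citation. Your route is the standard defining-characteristic one: a Sylow $p$-subgroup is the unipotent radical $U$ of a Borel subgroup, the centre of the universal version is a $p'$-group so the isomorphism type of $U$ is insensitive to the isogeny type, and the Chevalley commutator relations force $U$ to be non-abelian as soon as two positive roots sum to a root, which happens in every irreducible system of rank at least $2$; the rank-one twisted families are then handled by the explicit non-abelian structure of their Sylow $p$-subgroups. This buys transparency and independence from \cite{S-W}, at the cost of the bookkeeping you acknowledge. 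Two small points would tighten it: (i) in characteristics $2$ and $3$ some structure constants vanish, so you should take $\al,\be$ to be \emph{adjacent simple} roots, where the coefficient of $x_{\al+\be}(tu)$ in the commutator formula is $\pm 1$ and hence nonzero in every characteristic; (ii) your dismissal of the degenerate small members as ``solvable or non-simple'' misses the one genuinely relevant case, ${}^2G_2(3)'\cong \PSL_2(8)$, which \emph{is} simple and, viewed in defining characteristic $3$, has an abelian (cyclic of order $9$) Sylow $3$-subgroup; it is excluded by the hypothesis that $G$ is not isomorphic to $\PSL_2(q)$, not by simplicity, and this is precisely why that hypothesis is stated as an abstract isomorphism condition. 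With these adjustments your argument is a complete and correct substitute for the cited result.
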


Thus, it is sufficient to consider only cases when $p \neq 2$, and when $p$ does not divide $q$ (except linear groups). We will use Stather's result \cite[Table 1]{Stat} about sizes of Sylow $p$-subgroups for such case. Also, the following Lemma shows that, in this case, the number of edges in a $p$-block depends only on  $\ord_p(q)$, the multiplicative order of $q$ modulo $p$.

\begin{lemma}\cite{Geck92}\label{independ_q}
Let $G$ be a finite group of Lie type over a field of $q$ elements with cyclic (non-trivial) Sylow $p$-subgroup. Suppose that $p \nmid q$, and let $d = \ord_p(q)$. Then $p$-modular decomposition matrix directly depends only on $d$, and does not depend on the particular choice of $q$ and $p$.
\end{lemma}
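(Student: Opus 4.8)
The plan is to reduce the assertion to the standard combinatorial parametrisations --- of the ordinary characters, of the non-defining-characteristic $p$-blocks, and of the Brauer trees of cyclic blocks --- of a finite reductive group, and then to observe that every ingredient of that combinatorics is governed by $d=\ord_p(q)$ and by the root datum of $G$, and never by the individual values of $q$ and $p$.

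First I would record that, since $P$ is cyclic, non-trivial and $p\nmid q$, \emph{every} $p$-block of $G$ has cyclic defect group (a defect group lies inside a Sylow $p$-subgroup). By the Dade--Brauer theory of blocks of cyclic defect, the $p$-modular decomposition matrix of such a block is determined, up to repeating the row of the exceptional vertex, by the underlying Brauer tree: a planar embedded tree together with a distinguished exceptional vertex; and (cf.\ Lemma~\ref{e}) the multiplicity $m=(|D|-1)/e$ of that vertex, where $D$ is the defect group and $e$ the number of edges. Repeating a row does not change the shape of the matrix, so it suffices to show that the planar embedded tree and the location of the exceptional vertex of every $p$-block of $G$ depend only on $d$.

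Next I would fix the vertices and edges. Lusztig's classification parametrises $\mathrm{Irr}(G)$ by $G^{*F}$-classes of pairs $(s,\lambda)$, with $s$ semisimple in the dual group and $\lambda$ a unipotent character of $C_{G^*}(s)^F$, and the degree of the corresponding character is a generic degree polynomial depending only on the root datum. For $p\nmid q$ the partition of $\mathrm{Irr}(G)$ into $p$-blocks is governed by $d$-Harish-Chandra theory: $p$-blocks correspond to $G^{*F}$-classes of $d$-cuspidal pairs $(\mathbf{L},\mu)$, and the characters of a block, their planar order and the exceptional vertex are read off from the corresponding cyclotomic Hecke algebra of the relative Weyl group $W_{G^*}(\mathbf{L},\mu)$ --- cyclic of order $e$ when $P$ is cyclic --- together with the generic degrees. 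The $d$-split Levi subgroups, the $d$-cuspidal pairs, these relative Weyl groups and the parameters of their Hecke algebras (monomials in $q$) are all extracted from the root datum and $d$ alone; and the Brauer tree of a cyclic cyclotomic Hecke algebra specialised at a primitive $p$-th root of unity --- the planar order of its vertices and which of them is exceptional --- depends only on which root of unity is met, i.e.\ again only on $d$. (This is exactly the computation carried out in \cite{Geck92}; for the classical series one may alternatively use the explicit trees of Fong--Srinivasan, and the ordering of vertices around the exceptional vertex always comes from reducing generic degree polynomials modulo $\Phi_d(q)$, which sees $d$ but not the particular $q$.)

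Finally, to pass from unipotent blocks to arbitrary $p$-blocks I would invoke the Jordan decomposition of blocks (in the classical series this is already in Fong--Srinivasan; in general it is the theorem of Bonnaf\'e--Rouquier and Cabanes--Enguehard): the block attached to $(s,\mathbf{L},\mu)$ with $s$ a semisimple $p'$-element is Morita equivalent, compatibly with decomposition numbers, to a unipotent block of $C_{G^*}(s)^F$, whose structure and $d$-Harish-Chandra combinatorics are themselves generic in $q$. The step I expect to be the main obstacle is making this reduction genuinely uniform: one must verify that the combinatorial matches of $d$-cuspidal data are true identifications rather than numerical coincidences, handle the finitely many degenerate situations (very small $q$, the Suzuki and Ree groups, bad primes, and the places where the Jordan map is not literally an isometry), and check that none of these degenerations forces an extra gluing of ordinary characters which would change the planar tree. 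It is this bookkeeping over the exceptional cases, not the generic argument, where the real work lies.
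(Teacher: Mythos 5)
The paper offers no proof of this lemma at all: it is quoted directly from \cite{Geck92}, so there is nothing internal to compare your argument against. What you have written is essentially a reconstruction of the strategy behind the cited result (and its descendants in $d$-Harish-Chandra theory): cyclic-defect theory reduces the decomposition matrix to the Brauer tree with its exceptional vertex, the block combinatorics is read off from $d$-cuspidal pairs and cyclotomic Hecke algebras whose data come from the root datum and $d$ alone, and Jordan decomposition transports the statement from unipotent to arbitrary blocks. That is the right route and the same one the paper implicitly relies on. Two technical corrections, though. First, the relevant specialisation is at a \emph{$d$-th} root of unity, not a ``primitive $p$-th root of unity'': the point of $d=\ord_p(q)$ is exactly that the image of $q$ in characteristic $p$ has multiplicative order $d$, and it is this order that the Hecke-algebra decomposition map sees. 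Second, you do not need (and should not claim) genericity of the \emph{planar embedded} tree: the decomposition matrix is determined by the abstract tree together with the exceptional vertex and its multiplicity; the planar embedding governs the module category and its genericity is a subtler matter, so invoking it adds risk without adding anything. Your handling of the exceptional multiplicity $m=(|P|-1)/e$ (which genuinely does depend on $p$ and $q$) by ``repeating the row'' is the correct reading of the lemma as stated.

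The real shortfall is the one you name yourself: the entire verification that the $d$-cuspidal/Jordan-decomposition bookkeeping is uniform in $q$ and $p$, including small $q$, bad primes and the Suzuki and Ree groups, is deferred, so as it stands this is a plan rather than a proof. It is worth noting, however, that for the way the lemma is actually used in the paper this heaviest step is unnecessary: Lemma~\ref{independ_q} is invoked only for \emph{principal} blocks of classical groups (e.g.\ $\SO_{2m+1}(q)$ with $d=3$, or $\GO^{\pm}_{2m}(q)$), in order to transfer a GAP computation of the edge number $e$ from one convenient pair $(q,p)$ to all pairs with the same $d$. The principal block is unipotent, so the Fong--Srinivasan/Hecke-algebra genericity \cite{FS90,Geck92} already covers every application in the paper, and the Jordan-decomposition uniformity you flag as the main obstacle only matters for the lemma in the broad generality in which it is stated, not in the generality in which it is used.
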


\begin{lemma}\label{polyg}

Suppose that $G \in \{ \GL_n(q),~ \Sp_{2n}(q) \}$, where $q \geq 2$, or  $G \in \{ \GU_n(q^2), \SO_{2n+1}(q), \\ \CSO^{\pm}_{2n}(q)  \}$, where $q$ is odd, or $G \in \{ \SU_n(q^2),~ \GO^{\pm}_{2n}(q)\}$, where $q$ is even. Suppose also that a Sylow $p$-subgroup of $G$ is cyclic, $p \nmid q$ and $p\neq 2$. Then the Brauer tree of the principal block of $G$ is a line.
\end{lemma}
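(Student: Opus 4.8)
The plan is to reduce everything to the unitriangular (split) case via the "defect one" theory of Fong–Srinivasan, then invoke known results on the shape of the Brauer tree. First I would note that under the hypotheses $p \nmid q$, $p \neq 2$ and the Sylow $p$-subgroup being cyclic, the block $B_0(G)$ is a block of cyclic defect, and by Lemma \ref{independ_q} the decomposition matrix — hence the Brauer tree up to similarity — depends only on $d = \ord_p(q)$. The groups in the list are exactly the ambient connected reductive groups (or the appropriate index-two variants $\CSO$, $\SO$, $\SU$, $\GU$, $\GO$) whose $p$-local structure at a cyclic torus has been worked out; for these the principal block's Brauer tree is governed by the relevant cyclotomic Hecke algebra at the $e$-th root of unity, which in type $A$ and types $B,C,D$ with $p$ odd is the Iwahori–Hecke algebra of a symmetric group $S_e$ (coming from the relative Weyl group $N_G(S)/C_G(S) \cong S_e$ of the cyclic Sylow $p$-torus $S$).

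The key steps, in order, are: (i) identify $e = e_G = |N_G(P)/C_G(P)|$ with the order of the relative Weyl group of the unique cyclic Sylow $\Phi_d$-torus, using Lemma \ref{e} together with Stather's tables \cite{Stat} and the structure theory of tori in classical groups; in all the listed cases one gets that this relative Weyl group is cyclic of order $e$, so $N_G(P)/C_G(P)$ is cyclic. (ii) Appeal to the fact that for a block with cyclic defect whose inertial quotient is a \emph{cyclic} group acting on the defect group, the Brauer tree is always a \emph{straight line} (an open polygon): this is the classical statement that when the Brauer tree has a nontrivial automorphism coming from a cyclic Green correspondent, or more directly when the relevant Hecke algebra is that of a cyclic group $\mathbb{Z}/e$, all the simple modules are "strung out" in a line with the exceptional vertex at one end. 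Equivalently, one can argue via the real-stem criterion quoted from \cite{H-L}: all ordinary irreducible characters in $B_0(G)$ for these groups are real-valued on $p$-regular classes (for $\GL_n$, $\Sp_{2n}$, and the odd-$q$ orthogonal cases this is standard; for the unitary cases one uses that complex conjugation is already implemented inside the group), so the whole Brauer tree equals its real stem, which is a line by \cite[p.~3]{H-L}. (iii) Conclude, since a tree of diameter $2$ that is also a line must have $e \le 2$.

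The main obstacle I expect is step (i): verifying uniformly across all the families — general linear, symplectic, the two orthogonal types in odd characteristic (where one must be careful about $\SO$ versus $\GO$ versus $\CSO$ and the Dynkin-diagram automorphism of $D_n$), and the unitary and even-characteristic orthogonal cases — that the relative Weyl group $N_G(P)/C_G(P)$ of the cyclic Sylow $p$-subgroup is genuinely cyclic and not some larger group like $S_3$ or a wreath product. This is where the precise hypotheses on which group of each isogeny type is taken (e.g. $\GU_n$ rather than $\SU_n$ when $q$ is odd) enter, and it is handled case-by-case using Stather's classification of when the Sylow $p$-subgroup is cyclic \cite{Stat}: cyclicity of $P$ forces the ambient $\Phi_d$-torus to be a single cyclic factor, which pins down the relative Weyl group. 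Once cyclicity of the inertial quotient is in hand, the shape of the tree (a line) is immediate from cyclic-defect theory, and the remaining two steps are formal.
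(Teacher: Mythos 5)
Your proposal has a genuine gap at its central step (ii), and in fact both routes you offer there fail. The first route rests on the claim that a cyclic-defect block whose inertial quotient $N_G(P)/C_G(P)$ is cyclic must have a Brauer tree in the shape of a line. But for $p$ odd and $P$ cyclic the inertial quotient of \emph{any} block with cyclic defect group is automatically cyclic (it embeds into the cyclic group $\mathrm{Aut}(P)$), so this principle would force every Brauer tree at odd primes to be a line --- which is false, as the many non-line trees for sporadic and exceptional groups treated elsewhere in this paper show. Knowing $e=|N_G(P)/C_G(P)|$, or that the relative Weyl group of the Sylow $\Phi_d$-torus is cyclic, determines the local (star-shaped) tree of $N_G(P)$ but says nothing about the planar/graph structure of the global tree; that is exactly the hard content of the lemma. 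Your second route, via the real stem, is also not available in the generality you claim: the assertion that all ordinary characters in $B_0(G)$ are real-valued on $p$-regular classes is false for $\GL_n(q)$ and $\GU_n(q^2)$ (already $\GL_1$ and $\GU_1$ have non-real characters), and for $\Sp_{2n}(q)$, $\SO_{2n+1}(q)$, $\CSO^{\pm}_{2n}(q)$ with $q$ odd (e.g.\ $q\equiv 3 \pmod 4$ gives non-real classes); ``complex conjugation implemented inside the group'' does not hold for the unitary groups. To salvage a reality argument one would have to show that the \emph{non-exceptional} characters of the principal block are real (indeed rational, e.g.\ unipotent), which is itself a nontrivial block-theoretic fact and is not supplied by your outline.

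The paper's proof is much more economical and citation-based: for $\GL_n(q)$ it invokes Fong--Srinivasan \cite{FS80}, and for $\Sp_{2n}(q)$, $\GU_n(q^2)$, $\SO_{2n+1}(q)$, $\CSO^{\pm}_{2n}(q)$ with $q$ odd it invokes \cite{FS90}, where the trees of cyclic-defect blocks of these groups are determined to be open polygons. The real-stem argument is used only where it genuinely applies: for $\Sp_{2n}(q)$ and $\GO^{\pm}_{2n}(q)$ with $q$ even, every element is a product of two involutions by Gow \cite{Gow}, so all characters are real and the tree equals its real stem, a line \cite{H-L}; and for $\SU_n(q^2)$ with $q$ even, rationality of the non-exceptional characters of $B_0$ follows from \cite{Rob}. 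If you want a proof independent of \cite{FS80,FS90}, you must either establish the reality/rationality of the relevant principal-block characters case by case, or carry out the Hecke-algebra/generalized Green correspondence analysis in full --- neither of which your step (i) on cyclicity of the relative Weyl group accomplishes.
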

\begin{proof}
The result has been proven in \cite{FS80} for $\GL_n(q)$ with any $q$, and in \cite{FS90} for the groups  $\Sp_{2n}(q)$, $\GU_n(q^2)$, $\SO_{2n+1}(q)$, and $\CSO^{\pm}_{2n}(q)$, when $q$ is odd.

Assume that $G$ is one of the groups $\Sp_{2n}(q)$ or $\GO^{\pm}_{2n}(q)$, where $q$ is even. Then, by Gow \cite{Gow}, each element of the group $G$ is a product of two involutions. It follows that each element of $G$ is conjugated to its inverse. Therefore, each ordinary character of $G$ is real-valued. Thus, the Brauer tree of each block with cyclic defect group coincides with its real stem, i.e. it has the shape of a line.

Assume that $G = \SU_n(q^2)$, $q$ is even. Since $G$ is quasi-simple, it follows from \cite[Section 6]{Rob} that each non-exceptional character in $B_0(G)$ is rational-valued, and hence it is located on the real stem. Thus, $\tau_0(G)$ is a line.
\end{proof}

Thus, for these groups, we need just to find the number $e_0(G)$ of edges in the principal block of $G$, i.e. the order of $N_G(P)/C_G(P)$. As we will see soon, for general linear groups, this number is equal to the multiplicative order of $q$ modulo $p$. 
This is not always true for other classical groups, but the approach is similar.

\subsection{Linear groups}\label{linear}

Recall that the order of the general linear group $\mathrm{GL}(n,q)$ over the finite field $\mathbb{F}_q$ with $q$ elements is equal to
\begin{equation}
|\GL_n(q)| = q^{n(n-1)/2}\cdot (q-1)\cdot\ldots\cdot (q^n-1). 
\end{equation}

The special linear group $\mathrm{SL}(n,q)$ is a normal subgroup of $\mathrm{GL}(n,q)$ of index $q-1$. The projective special linear group $\mathrm{PSL}(n,q)$ is obtained from $\mathrm{SL}(n,q)$ by factoring out its center $Z$ whose order equals $(n,q-1)$.

\begin{lemma}\cite[Lemma~2]{K-P15b}\label{GL_sylow_cyclic}
Let $G=\GL_n(q)$, $p \nmid q$ and $d = \ord_p(q)$. Then

1) A Sylow $p$-subgroup $P$ of $G$ is cyclic and non-trivial if and only if $d \leq n < 2d$. 

2) If $d \leq n < 2d$, then $|N_G(P)/C_G(P)|=d$.
\end{lemma}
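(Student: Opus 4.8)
The plan is to work directly with a generator $g$ of $P$ acting on the natural module $V=\F_q^n$, so that everything reduces to the $\F_q$-representation theory of the cyclic group $\langle g\rangle$ together with elementary $p$-adic valuations.

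First, for part~1), I would compute the $p$-part of $|G|$. Since $p\nmid q$, the formula $|\GL_n(q)|=q^{n(n-1)/2}\prod_{i=1}^n(q^i-1)$ shows $|G|_p=\prod_{i=1}^n(q^i-1)_p$, and $p\mid q^i-1$ exactly when $d\mid i$. Hence if $n<d$ then $p\nmid|G|$ and $P=1$; if $d\le n<2d$ then $d$ is the only multiple of $d$ in $[1,n]$, so $|P|=(q^d-1)_p=:p^a$; and if $n\ge 2d$ then $(q^d-1)_p^{\,2}$ divides $|G|$. In the middle range, $\F_{q^d}^\times$ embedded block-diagonally in $\GL_d(q)\le\GL_n(q)$ (acting as the identity on an $(n-d)$-dimensional complement) is a cyclic subgroup of order $q^d-1$ whose Sylow $p$-subgroup has order $p^a=|P|$; by Sylow's theorem $P$ is $G$-conjugate to it, so $P$ is cyclic and nontrivial. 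When $n\ge2d$, two such blocks give $\F_{q^d}^\times\times\F_{q^d}^\times\le G$, whose Sylow $p$-subgroup is $C_{p^a}\times C_{p^a}$, non-cyclic since $p^a>1$; hence $P$ is non-cyclic. This settles 1).

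For part~2), assume $d\le n<2d$ and fix a generator $g$ of $P$, which I may take inside $\F_{q^d}^\times\subset\GL_d(q)\subset\GL_n(q)$ as above. As a semisimple element (its order $p^a$ is prime to $\mathrm{char}\,\F_q$), $g$ is determined up to $G$-conjugacy by its eigenvalues in $\bar\F_q$: a primitive $p^a$-th root of unity $\zeta$ together with its Frobenius conjugates, plus $1$ with some multiplicity. I first record that $\ord_{p^a}(q)=d$: from $p^a\mid q^d-1$ we get $\ord_{p^a}(q)\mid d$, while reduction mod $p$ gives $d=\ord_p(q)\mid\ord_{p^a}(q)$. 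Thus the Frobenius orbit of $\zeta$ has size $d$ and consists of $d$ distinct primitive $p^a$-th roots, and $1$ occurs with multiplicity $n-d<d$. Decomposing $V$ into its $\F_q[g]$-isotypic components $V=V_\zeta\oplus V_1$ with $\dim V_\zeta=d$, $\dim V_1=n-d$, the endomorphism ring is $\F_{q^d}\times M_{n-d}(\F_q)$, whence $C_G(P)=C_G(g)\cong\F_{q^d}^\times\times\GL_{n-d}(q)$. Finally, $N_G(P)/C_G(P)$ is the group of automorphisms of the cyclic group $P$ realised by conjugation in $G$, i.e.\ $\{k\in(\Z/p^a)^\times : g\sim_G g^k\}$. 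Comparing eigenvalue multisets, $g^k\sim_G g$ iff $\zeta^k$ lies in the Frobenius orbit of $\zeta$, i.e.\ iff $k\equiv q^j\pmod{p^a}$ for some $j$; hence $N_G(P)/C_G(P)\cong\langle q\bmod p^a\rangle$, of order $\ord_{p^a}(q)=d$.

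The routine parts are the order count and the module decomposition. The load-bearing step — and the main obstacle — is the \emph{upper} bound $|N_G(P)/C_G(P)|\le d$: one must rule out elements of $\GL_n(q)$ conjugating $g$ to a power $g^k$ with $k$ outside the Frobenius orbit of $q$ modulo $p^a$. This is exactly the rigidity of rational canonical form for semisimple elements (two such elements of $\GL_n(q)$ are $\F_q$-conjugate iff they are $\bar\F_q$-conjugate), so the argument hinges on making the eigenvalue bookkeeping precise: distinctness of $\zeta,\zeta^q,\dots,\zeta^{q^{d-1}}$, the fact that none of them equals $1$ (using $n<2d$), and matching of multiplicities only in the claimed cases. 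The lower bound $\ge d$ is then immediate, since already $N_{\GL_d(q)}(\F_{q^d}^\times)=\Gamma L_1(q^d)$ induces the full cyclic group of order $d$ of field automorphisms on $P$.
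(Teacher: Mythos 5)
The paper does not actually prove this lemma; it is quoted verbatim from \cite[Lemma~2]{K-P15b}, so there is no internal proof to compare against. Your argument is correct and self-contained: the count $|G|_p=\prod_{d\mid i,\,i\le n}(q^i-1)_p$ settles when $P$ is trivial or non-cyclic, the block-diagonal copy of $\F_{q^d}^\times$ realises a cyclic Sylow subgroup in the range $d\le n<2d$, the observation $\ord_{p^a}(q)=d$ is proved correctly, and the identification $N_G(P)/C_G(P)\cong\{k:\ g\sim_G g^k\}$ (legitimate because any element conjugating $g$ to $g^k$ automatically normalises $\langle g\rangle=P$) together with the fact that conjugacy of semisimple matrices over $\F_q$ is detected by eigenvalue multisets over $\overline{\F}_q$ gives exactly $\langle q\bmod p^a\rangle$, of order $d$. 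The only difference from the standard treatment (and, presumably, from the cited reference) is in how the upper bound $|N_G(P)/C_G(P)|\le d$ is obtained: the usual route quotes the structure of the Singer cycle normaliser, $N_{\GL_d(q)}(\F_{q^d}^\times)=\F_{q^d}^\times\rtimes\mathrm{Gal}(\F_{q^d}/\F_q)$ of order $d(q^d-1)$, whereas you bypass that theorem by comparing characteristic polynomials of $g$ and $g^k$ directly; this is slightly more elementary and makes transparent why only the powers $k\equiv q^j\pmod{p^a}$ occur, at the cost of the explicit centraliser computation $C_G(P)\cong\F_{q^d}^\times\times\GL_{n-d}(q)$, which your module decomposition handles correctly. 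No gaps.
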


\begin{proposition}\label{GL-tree}
Let $G=\GL_n(q)$, $n \geq 2$. Then $G \in \mathfrak{X}_p$ if and only if one of the following statements holds.

1) $n=2$ and $p=q \in \{ 2,\,3\}$;

2) $n \in \{2,\,3\}$, $p \neq 2$ and $p \mid q+1$.

\end{proposition}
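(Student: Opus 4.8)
The plan is to reduce the general linear group question to the already-settled case of the projective special linear group, using the structural results of Section~\ref{S-simple} together with Lemma~\ref{polyg} and Lemma~\ref{GL_sylow_cyclic}. First I would dispose of the easy cases $p \mid q$: by Lemma~\ref{p_div_q} (applied, via the simple quotient, to any group of Lie type other than $\PSL_2(q)$) a Sylow $p$-subgroup of $\GL_n(q)$ fails to be cyclic once $n \geq 3$, and for $n = 2$ a direct check shows the Sylow $p$-subgroup is cyclic exactly when $q = p \in \{2,3\}$, in which case $\GL_2(q)$ is solvable and hence lies in $\mathfrak{X}_p$ by Proposition~\ref{some_prop}(1). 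This yields case~1) of the statement. Also, by Lemma~\ref{2-sylow}, we never need to consider $p = 2$ except inside solvable groups, so from now on we assume $p \neq 2$ and $p \nmid q$.

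Next, with $p \neq 2$ and $p \nmid q$, set $d = \ord_p(q)$. By Lemma~\ref{GL_sylow_cyclic}(1) the Sylow $p$-subgroup $P$ is cyclic and non-trivial iff $d \leq n < 2d$, and then, by Lemma~\ref{GL_sylow_cyclic}(2) and Lemma~\ref{e}, the principal block of $\GL_n(q)$ has $e_0(\GL_n(q)) = d$ edges. By Lemma~\ref{polyg} the tree $\tau_0(\GL_n(q))$ is a line; so $\GL_n(q) \in \mathfrak{X}_p$ if and only if this line has at most $2$ edges, i.e. $d \in \{1, 2\}$. The case $d = 1$ means $p \mid q - 1$ and $1 \leq n < 2$, which forces $n = 1$ and is excluded by the hypothesis $n \geq 2$; so the only surviving possibility is $d = 2$, i.e. $p \mid q + 1$ (and $p \nmid q - 1$), with $2 \leq n < 4$, that is $n \in \{2,3\}$. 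This is precisely case~2) of the Proposition, and conversely in each of these situations $d = 2$ gives a line with $2$ edges, hence membership in $\mathfrak{X}_p$.

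I expect the only real subtlety to be the bookkeeping for $p \mid q$ in the $n = 2$ case (distinguishing when $\GL_2(q)$ is solvable and checking directly that the Sylow $p$-subgroup, which sits inside a group with a normal $\SL_2$, is cyclic only for $q = p \in \{2,3\}$), together with making sure the boundary of the inequality $d \leq n < 2d$ is handled correctly when $d = 1$. Everything else is a matter of combining Lemma~\ref{polyg} (tree is a line), Lemma~\ref{GL_sylow_cyclic} (number of edges equals $d$), and the elementary observation that a line is a star iff it has $\leq 2$ edges. Alternatively, one could route the argument through $\PSL_n(q)$ and Proposition~\ref{some_prop}(3)/Lemma~\ref{L-tree}, since $\GL_n(q)/\SL_n(q)$ and $\SL_n(q)/\PSL_n(q)$ have orders dividing $q-1$, hence coprime to $p$ when $p \mid q+1$ with $p$ odd; but the direct computation via Lemma~\ref{GL_sylow_cyclic} is cleaner and avoids having to track the center.
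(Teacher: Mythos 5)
Your second paragraph (the case $p \nmid q$) is exactly the paper's argument: Lemma~\ref{GL_sylow_cyclic} gives cyclicity iff $d \leq n < 2d$ with $e_0(\GL_n(q)) = d$, Lemma~\ref{polyg} makes $\tau_0(\GL_n(q))$ a line, $n \geq 2$ forces $d \geq 2$, and the star condition becomes $d = 2$, $n \in \{2,3\}$. The problem is your treatment of $p \mid q$, where there is a genuine error. For $n = 2$ and $q = p^f$, the Sylow $p$-subgroup of $\GL_2(q)$ is the group of upper unitriangular matrices, isomorphic to the additive group of $\F_q$; it is cyclic exactly when $q = p$, for \emph{every} prime $p$, not only for $p \in \{2,3\}$. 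So $\GL_2(5)$, $\GL_2(7)$, etc.\ all have non-trivial cyclic Sylow $p$-subgroups, and your proof never decides whether they lie in $\mathfrak{X}_p$: you cannot discard them by non-cyclicity (your stated reason is false), and you cannot put them in via Proposition~\ref{some_prop}(1) either, since $\GL_2(p)$ for $p \geq 5$ involves the simple group $\PSL_2(p)$ and is therefore not $p$-solvable. These groups must be excluded by an actual computation of the tree: the paper notes $|N_G(P)| = p(p-1)^2$ and $|C_G(P)| = p(p-1)$, so by Lemma~\ref{e} the principal block has $p-1$ edges, and by Lemma~\ref{polyg} the tree is a line; hence $\GL_2(p) \in \mathfrak{X}_p$ iff $p - 1 \leq 2$, i.e.\ $p \in \{2,3\}$. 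Without some such count (or an equivalent reference, as the paper later uses Dornhoff for $\SL_2(p)$), case~1) is not established.

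Two smaller points. First, your appeal to Lemma~\ref{2-sylow} ("we never need to consider $p=2$ except inside solvable groups") is not quite enough as stated, since $\GL_2(3)$ is solvable and one still has to know its Sylow $2$-subgroup is non-cyclic; but this is harmless because your own $d$-argument already eliminates $p = 2$, $p \nmid q$ through the case $d = 1$, so the lemma is not needed at all. Second, invoking Lemma~\ref{p_div_q} "via the simple quotient" for $n \geq 3$, $p \mid q$ works but is heavier than necessary (and brushes against the exceptional isomorphism $\PSL_3(2) \cong \PSL_2(7)$); the unipotent upper triangular subgroup of $\GL_n(q)$ is visibly non-abelian for $n \geq 3$, which settles that case directly.
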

\begin{proof}
Assume that $p \mid q$. In this case, $P$ is cyclic if and only if $n=2$ and $q = p$. 
Let $G = \mathrm{GL}_2(p)$. Then $|N_G(P)|=p(p-1)^2$ and $|C_G(P)|=p(p-1)$. Hence, $\tau_0(G)$ is a line (by Lemma \ref{polyg}) with $|N_G(P)/C_G(P)|= p-1$ edges. Therefore, $\mathrm{GL}_2(p) \in \mathfrak{X}_p$ if and only if $p \in \{2,3\}$.

Assume now that $p \nmid q$, and $d = \ord_p(q)$.
Since $P$ is cyclic, by Lemma \ref{GL_sylow_cyclic} we have that $p \neq 2$, $n < 2d$ and $|N_G(P)/C_G(P)| = d$. In particulary, $d \geq 2$. Thus, $\tau_0(G)$ is a star if and only if $d=2$ and $n=2,3$.

\end{proof}

\begin{lemma}\label{GL-SL-tree}
Let $G=\GL_n(q)$, $H = \SL_n(q)$. 
Suppose that a Sylow $p$-subgroup $P$ of $H$ is cyclic (and non-trivial).

1) If $p \nmid q$, then $\tau_0(G)$ and $\tau_0(H)$ contain the same number of edges.

2) If  $p \nmid q$ and  $p \nmid q - 1$, then $\tau_0(G) = \tau_0(H)$.
\end{lemma}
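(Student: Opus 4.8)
The plan is to exploit the normal subgroup structure $H = \SL_n(q) \norm G = \GL_n(q)$ together with the earlier results on similarity of Brauer trees. Since $H$ has index $q-1$ in $G$, we split into two cases according to whether $p$ divides $q-1$.

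For part 1), note that $P \in \Syl_p(H)$, and by Lemma \ref{GL_sylow_cyclic}, if $p \nmid q$ then a cyclic Sylow $p$-subgroup of $H$ forces $p \neq 2$ and $\ord_p(q) \geq 2$; in particular $p \nmid q-1$ cannot fail to matter only through the factor group. The key computation is the number of edges: by Lemma \ref{e}, $e_G = |N_G(P)/C_G(P)|$ and $e_H = |N_H(P)/C_H(P)|$, and by Lemma \ref{e_subgroup} we have $e_H \mid e_G$ with $e_G/e_H = |C_G(P)H|/|G|$. The main point is to show $C_G(P)H = G$, equivalently that $C_G(P)$ surjects onto $G/H \cong \F_q^\times$ via the determinant. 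For this I would produce an explicit scalar matrix, or more generally a block-diagonal element with the right determinant, lying in $C_G(P)$: since $P$ is cyclic of order coprime to $q$, realize $P$ inside a Coxeter torus of $\GL_d(q) \leq \GL_n(q)$ (with $d = \ord_p(q) \leq n < 2d$) and observe that its centralizer in $G$ contains a $\GL_{n-d}(q)$ factor acting on the complementary subspace, whose determinants already exhaust $\F_q^\times$ when $n > d$; when $n = d$ one instead uses that $C_G(P)$ contains the full (non-split) torus of order $q^d - 1$, whose determinant map to $\F_q^\times$ is surjective because $q^d - 1$ is divisible by $q - 1$ and the norm map $\F_{q^d}^\times \to \F_q^\times$ is onto. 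Hence $e_G = e_H$.

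For part 2), with the additional hypothesis $p \nmid q - 1$, the index $|G/H| = q-1$ is coprime to $p$, so Lemma \ref{L-tree}(1) applies directly: $\tau_0(G)$ is similar to $\tau_0(H)$. Combined with $e_G = e_H$ from part 1), similarity forces equality — two similar trees $\tau^m$ and $\tau^n$ with the same number of edges must coincide, since $e_G = m \cdot e(\tau)$ and $e_H = n \cdot e(\tau)$ give $m = n$. Thus $\tau_0(G) = \tau_0(H)$.

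The main obstacle I anticipate is the surjectivity of the determinant on $C_G(P)$ in the edge-count step, i.e.\ handling the boundary case $n = d$ where there is no complementary $\GL_{n-d}(q)$ to borrow determinants from; this is where the norm-map surjectivity $\F_{q^d}^\times \twoheadrightarrow \F_q^\times$ and the explicit description of the Coxeter torus of $\GL_d(q)$ are genuinely needed. The similarity-implies-equality deduction in part 2) is then purely formal given part 1).
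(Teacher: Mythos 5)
Your proposal is correct and follows essentially the same route as the paper: a Frattini-type comparison of normalizers and centralizers of $P$ in $G$ and $H$ (equivalently, surjectivity of the determinant on $C_G(P)$, so that $C_G(P)H=G$) to get $e_G=e_H$, followed by Lemma \ref{L-tree} to upgrade similarity plus equal edge numbers to equality of trees. In fact your case analysis is slightly more careful than the paper's, which asserts that $C_G(P)$ is a Singer cycle of order $q^n-1$ (literally accurate only when $n=d=\ord_p(q)$), whereas you correctly handle $n>d$ via the $\GL_{n-d}(q)$ factor of the centralizer and reserve the norm-map argument $\F_{q^d}^\times\twoheadrightarrow\F_q^\times$ for the boundary case $n=d$.
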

\begin{proof}
1) According to Frattini's argument, $G = N_G(P)H$. Therefore,
$$G/N_G(P)=H/(H \cap N_G(P)) = H/N_H(P),$$
and $|N_G(P)|/|N_H(P)| = |G|/|H| = q-1$.

Since $p \nmid q$, the centralizers $C_G(P)$ and $C_H(P)$ coincide with Singer cycles in $G$ and $H$. They have orders $q^n-1$ and $(q^n-1)/(q-1)$, correspondingly. It gives that $|N_H(P)/C_H(P)| = |N_G(P)/C_G(P)|$, as desired.

2) If also $(|G/H|,p) = 1$, then by Lemma \ref{L-tree}, Brauer trees $\tau_0(G)$ and $\tau_0(H)$ are similar. And since they contain the same number of edges, they have the same shape.

\end{proof}

\begin{proposition}
Let $H = \PSL_n(q)$ and $G = \SL_n(q)$, where $n \geq 2$ and $q \geq 5$. Then $H \in \mathfrak{X}_p$ ($G \in \mathfrak{X}_p$) if and only if one of the following statements holds.

1) $n=2$ and $p = q = 5$;

2) $n=2$, $p \neq 2$ and $p \mid q-1$;

3) $n \in \{2,\,3\}$, $p \neq 2$ and $p \mid q+1$; 

\end{proposition}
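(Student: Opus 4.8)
The plan is to reduce the claim for $\PSL_n(q)$ to the one for $G=\SL_n(q)$, and then to read the answer off from the general linear case, separating the situations $p\mid q$, $p\mid q-1$ and $p\nmid q(q-1)$. First I record the reductions. By Lemma~\ref{2-sylow} we may assume $p$ is odd, and we may assume a Sylow $p$-subgroup $P$ of $\SL_n(q)$ is cyclic and non-trivial, since otherwise neither $\SL_n(q)$ nor $\PSL_n(q)$ lies in $\mathfrak{X}_p$. Observe that if $p\mid q-1$ and $n\ge 3$ then $P$ is \emph{not} cyclic: the diagonal torus $\{\mathrm{diag}(t_1,\dots,t_n):\prod t_i=1\}$ of $\SL_n(q)$ has order $(q-1)^{n-1}$, hence contains a copy of $(\Z/p)^{n-1}$. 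In particular, if $p\mid|Z(\SL_n(q))|=(n,q-1)$ then $p\mid q-1$ and $p\mid n\ge 3$, which is impossible; thus $p\nmid|Z(\SL_n(q))|$. Since $q\ge 5$, $\SL_n(q)$ is quasi-simple, so $O_{p'}(\SL_n(q))=Z(\SL_n(q))$ and $\PSL_n(q)\cong\SL_n(q)/O_{p'}(\SL_n(q))$; by Proposition~\ref{some_prop}(2) it therefore suffices to decide when $G=\SL_n(q)\in\mathfrak{X}_p$.

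Suppose first $p\mid q$. By Lemma~\ref{p_div_q}, for $n\ge 3$ a Sylow $p$-subgroup of $\SL_n(q)$ is non-abelian, hence non-cyclic; and for $n=2$ it has order $q$ and is cyclic only when $q=p$. So $n=2$ and $q=p$, i.e. $G=\SL_2(p)$. Now $\SL_2(p)$ is normal in $\GL_2(p)$ of index $p-1$, coprime to $p$; by the proof of Proposition~\ref{GL-tree} the tree $\tau_0(\GL_2(p))$ is a line, so $\tau_0(G)$ is a line by Lemma~\ref{L-tree}(2). Computing $N_G(P)$ and $C_G(P)$ inside $\SL_2(p)$ gives, via Lemma~\ref{e}, $e_G=(p-1)/2$, and this line is a star if and only if $(p-1)/2\le 2$, i.e. $p=5$. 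This is item~1).

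Now suppose $p\nmid q$, and put $d=\ord_p(q)$. If $p\mid q-1$, i.e. $d=1$, then by the remark in the first paragraph cyclicity of $P$ forces $n=2$; inside $\SL_2(q)$ one finds $C_G(P)=T$ and $N_G(P)=N_G(T)$, where $T$ is the split torus of order $q-1$ (here $P$ is the cyclic Sylow $p$-subgroup of $T$, characteristic in $T$), so $e_G=2$ by Lemma~\ref{e}. A Brauer tree with two edges has three vertices, hence is a path of length $2$, which is a star; thus $\SL_2(q)\in\mathfrak{X}_p$, giving item~2). If instead $p\nmid q(q-1)$, then $d\ge 2$ and automatically $p\nmid(n,q-1)$, so $P$ is also a Sylow $p$-subgroup of $\GL_n(q)$ and it is cyclic precisely when $d\le n<2d$ (Lemma~\ref{GL_sylow_cyclic}). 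By Lemma~\ref{polyg} the tree $\tau_0(\GL_n(q))$ is a line; by Lemma~\ref{GL-SL-tree}(2) we get $\tau_0(G)=\tau_0(\GL_n(q))$, which has $d$ edges (Lemmas~\ref{GL_sylow_cyclic}(2) and~\ref{GL-SL-tree}(1)). A line with $d$ edges is a star if and only if $d\le 2$, i.e. (as $d\ge 2$) if and only if $d=2$; together with $2\le n<4$ and the equivalence $d=2\iff p\mid q+1$ for odd $p$ with $p\nmid q$, this yields $n\in\{2,3\}$ and $p\mid q+1$, which is item~3). Since items~1)--3) exhaust all cases just analysed, the proof is complete.

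The step I expect to be the main obstacle is the case $p\mid q-1$: there $\GL_n(q)$ has a \emph{non-cyclic} Sylow $p$-subgroup, so neither Lemma~\ref{polyg} nor Lemma~\ref{GL-SL-tree} is available and one has to argue directly inside $\SL_2(q)$ --- fortunately the count $e_G=2$ makes the tree automatically a star, so no finer information is needed. The accompanying bookkeeping --- that cyclicity of $P$ already forces $n=2$ when $p\mid q-1$, and that $p\nmid|Z(\SL_n(q))|$ whenever $P$ is cyclic, which is what legitimizes passing freely between $\SL_n(q)$ and $\PSL_n(q)$ --- is the other point that requires care.
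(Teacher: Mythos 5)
Your proof is correct and follows the paper's overall structure (the case split $p\mid q$, $p\mid q-1$, $p\nmid q(q-1)$, with the last case read off from $\GL_n(q)$ via Lemmas~\ref{GL_sylow_cyclic}, \ref{polyg} and \ref{GL-SL-tree}), but you replace the paper's external citations in the two ``defective'' subcases by internal arguments. For $p\mid q-1$ the paper computes $e=2$ for $\SL_2(q)$ from Dornhoff and then quotes Burkhardt's decomposition matrices to handle $\PSL_2(q)$ separately; you instead observe that under cyclicity of $P$ one always has $p\nmid|Z(\SL_n(q))|$, so $O_{p'}(\SL_n(q))=Z$ and Proposition~\ref{some_prop}(2) transfers the conclusion to $\PSL_n(q)$ uniformly, and you note that any tree with two edges is automatically a star --- a cleaner route that avoids the Burkhardt reference altogether. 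For $p=q$ the paper cites Dornhoff (Theorem~71.3) for the fact that $\tau_0(\SL_2(p))$ is a line with $(p-1)/2$ edges, whereas you derive the line shape from $\tau_0(\GL_2(p))$ via Lemma~\ref{L-tree}(2); this works, but note that the line shape of $\tau_0(\GL_2(p))$ in Proposition~\ref{GL-tree} rests on an application of Lemma~\ref{polyg} outside its stated hypothesis $p\nmid q$, so for full rigour one should (as the paper does for $\SL_2(p)$) invoke the classical defining-characteristic description directly --- the shape information is genuinely needed, since for $p\geq 7$ the count $e=(p-1)/2\geq 3$ alone does not exclude a star. One further small gloss (shared with the paper): your opening reduction ``otherwise neither group lies in $\mathfrak{X}_p$'' tacitly assumes that non-cyclicity of the Sylow $p$-subgroup of $\SL_n(q)$ also rules out $\PSL_n(q)$; this needs a word exactly when $p\mid(n,q-1)$, where the two Sylow subgroups are not isomorphic, though the conclusion (non-cyclic Sylow in $\PSL_n(q)$ for odd $p\mid q-1$, $n\geq 3$) is indeed true.
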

\begin{proof}

We may assume that $p>2$, overwise Sylow $p$-subgroups of both $\SL_n(q)$ and $\PSL_n(q)$ for $q \geq 5$ are not cyclic.

The center $Z$ of $\SL_n(q)$ has the order $(n, q-1)$. Therefore, if $p \nmid q-1$, then $Z$ is in the kernel of the block $B_0(\SL_n(q))$; hence, this block consides with $B_0(\PSL_n(q))$.

First, assume that $n=2$. We will consider three cases: 1) $p \mid q$, 2) $p \mid q-1$, and 3) $p$ doesn't divide $q$ neither $q-1$.

1) Let $p \mid q$. Then a Sylow $p$-subgroup $P$ of $\SL_2(q)$ is cyclic if and only if $p=q$. According to \cite[Theorem 71.3]{Dornhoff}, the Brauer tree of the principal $p$-block of $\SL_2(p)$ is a line with $(p-1)/2$ edges. Thus, $\SL_2(p) \in \mathfrak{X}_p$ if and only if $p \leq 5$.

2) Let $p \mid q-1$. For $G=\SL_2(q)$, we have that $P \in Syl_p(G)$ is cyclic, the centralizer $C_G(P)= \langle \alpha \rangle$ and the normalizer $N_G(y)= \langle \alpha, \beta \rangle$, where $\alpha = \bsm \nu & 0 \\ 0 & \nu^{-1} \esm$ and $\beta = \bsm 0&1\\-1&0 \esm$. Here $\nu$ is a generator of the multiplicative group $\mathbb{F}_q^*$ (see \cite[p.\,230]{Dornhoff}). Therefore, $|N_G(P)/C_G(P)|=2$, and $G \in \mathfrak{X}_p$. 

Now consider $H=\PSL_2(q)$, where $p \mid q-1$. A Sylow $p$-subgroup of $H$ is also cyclic, and it follows from \cite{Burk76} that the tree $\tau_0(H)$ is a line with two edges.

3) Let $p \nmid q$ and $p \nmid q-1$. In this case, the tree $\tau_0(H)$ consides with $\tau_0(G)$. But  $\tau_0(G)$ consides also with $\tau_0(\GL_n(q))$ by Lemma \ref{GL-SL-tree}. Thus, the trees $\tau_0(H)$ and $\tau_0(G)$ are both stars if $p \mid q+1$ (see Proposition \ref{GL-tree}).

Now suppose that $n \geq 3$. In this case, a Sylow $p$-subgroup of $\SL_n(q)$ is cyclic if and only if $p \nmid q$ and $p \nmid q-1$. As well as above, the trees of all three groups $\PSL_n(q)$, $\SL_n(q)$ and $\GL_n(q)$ are the same, and we can apply Proposition \ref{GL-tree}.

\end{proof}

\subsection{Symplectic groups}\label{symp}

Let $V$ be a vector space of even dimension $n=2m$ over the field $\F_q$. Let $f: V\times V\to V$ be a skew-symmetric form on $V$, i.e. a bilinear form such that $f(u,v)= - f(v,u)$ for all $u, v\in V$. 
If $f$ is given by a matrix $W$, then $W= -W^t$, where $t$ means transpose.

A symplectic group $\Sp_{2m}(q)$ consists of invertible matrices $A$ of order $2m$ which preserve $f$,
i.e. $A W A^t= W$. It has the order
$$
|\Sp_{2m}(q)|= q^{m^2}\cdot (q^2-1)\cdot (q^4-1)\cdot\ldots \cdot (q^{2m}-1)\,.
$$

We may take \rule{0pt}{6mm} $W = \bsm 0 & I_m \\ -I_m & 0 \esm$. Then the rule
\begin{equation}\label{emb_sp}
\phi:~ A\mapsto \bsm A&0\\ 0& A^{-t}\esm 
\end{equation}
defines an embedding of $\GL_m(q)$ into $\Sp_{2m}(q)$.

The projective symplectic group is $\PSp_{2m}(q) = \Sp_{2m}(q)/Z$, where $Z$ is the center of $\Sp_{2m}(q)$.

The groups $\Sp_2(q) \cong \SL_2(q)$ and $\PSp_2(q)\cong \PSL_2(q)$ have been already considered in the previous section. For $m \geq 2$, the groups $\PSp_{2m}(q)$ are simple, except $\PSp_4(2) \cong \Sp_4(2) \cong S_6$. 

If $p \neq 2$, then the principal $p$-blocks of $\Sp_n(q)$ and $\PSp_n(q)$ are the same, because $|Z|=(2,q-1)$. Moreover, $\Sp_{2m}(q) \cong \PSp_{2m}(q)$ for even $q$. 

\begin{proposition}\label{symp}
Let $G$ be one of the groups $\PSp_{2m}(q)$ or $\Sp_{2m}(q)$, where $m \geq 2$, and let $p$ be a prime dividing $|G|$. Then $G \notin \mathfrak{X}_p$.
\end{proposition}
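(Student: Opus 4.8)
\emph{Reductions.} I would first dispose of the trivial cases with the preliminary lemmas. If $p=2$, then $\Sp_{2m}(q)$ and $\PSp_{2m}(q)$ with $m\geq2$ are non-solvable (including $\PSp_4(2)\cong S_6$, which contains $A_6$), so by Lemma~\ref{2-sylow} a Sylow $2$-subgroup is non-cyclic, and $G\notin\mathfrak{X}_2$ straight from the definition of $\mathfrak{X}_p$. So assume $p$ odd. Then $|Z(\Sp_{2m}(q))|=(2,q-1)$ is prime to $p$, so the principal $p$-blocks of $\Sp_{2m}(q)$ and $\PSp_{2m}(q)$ coincide (as observed in the text just before the statement), and it suffices to treat $G=\Sp_{2m}(q)$. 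If in addition $p\mid q$, then $\PSp_{2m}(q)$ is a simple group of Lie type not isomorphic to $\PSL_2(q)$ (the case $\PSp_4(2)\cong S_6$ would force $q=2$, $p=2$, already done), so by Lemma~\ref{p_div_q} its Sylow $p$-subgroup, and with it the isomorphic Sylow $p$-subgroup of $\Sp_{2m}(q)$, is non-abelian, in particular non-cyclic, and again $G\notin\mathfrak{X}_p$. From now on $p$ is odd and $p\nmid q$.

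\emph{Reduction to a count of edges.} If a Sylow $p$-subgroup $P$ of $G$ is non-cyclic there is nothing to prove, so assume $P$ is cyclic and non-trivial. By Lemma~\ref{polyg}, $\tau_0(G)$ is a line, and a line is a star precisely when it has at most two edges; by Lemma~\ref{e} the number of edges equals $e_0(G)=|N_G(P)/C_G(P)|$. So the proposition reduces to proving $e_0(G)\geq3$ for every $m\geq2$ with $P$ cyclic. Put $d=\ord_p(q)$ and $d^{*}=d$ if $d$ is odd, $d^{*}=d/2$ if $d$ is even. From Stather's table \cite[Table~1]{Stat} one reads that, for $p$ odd with $p\nmid q$, a Sylow $p$-subgroup of $\Sp_{2m}(q)$ is cyclic and non-trivial exactly when $d^{*}\leq m<2d^{*}$; together with $m\geq2$ this excludes $d^{*}=1$, so $d^{*}\geq2$ and hence $d\geq3$.

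\emph{Lower bound on the number of edges.} It then suffices to show $e_0(G)\geq d$. I would do this by placing $P$ inside an explicit cyclic maximal torus and estimating its relative Weyl group. If $d$ is odd, then $d\leq m$, and the embedding of the form \eqref{emb_sp} gives $\GL_d(q)\leq\Sp_{2d}(q)\leq\Sp_{2m}(q)$; here $P$ lies in the Singer torus $T\cong C_{q^{d}-1}$ of $\GL_d(q)$, and $N_G(T)$ induces on $T$ both the Frobenius $x\mapsto x^{q}$ and the inversion $x\mapsto x^{-1}$, generating a cyclic group of order $2d$. If $d$ is even, then $d\leq2m$, and $P$ lies in the Coxeter torus $T\cong C_{q^{d/2}+1}$ of $\Sp_{d}(q)\leq\Sp_{2m}(q)$, whose relative Weyl group is cyclic of order $d$. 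In either case $T$ is cyclic, so $P$ is characteristic in $T$, whence $N_G(T)\leq N_G(P)$ while $T\leq C_G(P)$; a short check shows that the cyclic group of order $d$ just produced acts faithfully on the subgroup of order $p$ of $T$, hence on $P$, and so injects into $N_G(P)/C_G(P)$. Therefore $e_0(G)\geq d\geq3$, the line $\tau_0(G)$ has at least three edges, it is not a star, and $G\notin\mathfrak{X}_p$.

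\emph{Main difficulty.} The delicate point is this last step: one must identify the torus through $P$ and its normalizer in each residue class of $d$, and verify that the relevant part of the relative Weyl group is not absorbed into $C_G(P)$ on passing from $T$ to $P$ (namely that $C_G(P)$ is not too large) — in particular one should double-check the few groups with very small $q$ by hand. A cleaner but less self-contained alternative to this torus computation is to read off the number of edges directly from the explicit Brauer trees of symplectic groups in odd characteristic determined in \cite{FS90}. Everything else is routine bookkeeping with the preliminary lemmas.
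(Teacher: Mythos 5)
Your proposal is correct and follows essentially the same route as the paper: the same reductions (Lemma~\ref{2-sylow} for $p=2$, Lemma~\ref{p_div_q} for $p\mid q$, coincidence of the principal blocks of $\Sp_{2m}(q)$ and $\PSp_{2m}(q)$ for odd $p$), the same use of Lemmas~\ref{polyg} and~\ref{e} to reduce to showing $e_0(G)=|N_G(P)/C_G(P)|\geq 3$, and the same lower bound $e\geq d\geq 3$ obtained by locating $P$ in a small block-diagonal subgroup and exhibiting an automorphism of order $d$. The only cosmetic difference is that you read off this order-$d$ automorphism from the relative Weyl groups of the Singer torus $C_{q^d-1}$ (odd $d$) and the Coxeter torus $C_{q^{d/2}+1}$ of $\Sp_d(q)$ (even $d$), whereas the paper cites Stather's Lemma~4.6 for the normalizer after the embeddings \eqref{emb_sp} and $\Sp_d(q)\times\Sp_{2m-d}(q)\leq\Sp_{2m}(q)$; the two justifications are interchangeable.
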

\begin{proof}
Using Lemmas \ref{2-sylow} and \ref{p_div_q}, we may assume that $p > 2$ and $p \nmid q$. In this case, Sylow subgroups of $\PSp_{2m}(q)$ and $\Sp_{2m}(q)$ are isomorphic, and the principal blocks of this groups are the same, because $p$ doesn't divide $|Z(\Sp_{2m}(q))|$. Thus, it suffices to consider only $\Sp_{2m}(q)$.

Assume that $G = \Sp_{2m}(q) \in \mathfrak{X}_p$ for some $m, q$ (where $m \geq 2$). 

Let $d = \ord_p(q)$. If $d=1$, then, by Lemma \ref{GL_sylow_cyclic}, a Sylow $p$-subgroup of $\GL_m(q)$ is not cyclic for $m\geq 2$. The diagonal embedding \ref{emb_sp} shows that the same holds true for a Sylow subgroup $P$ of $\Sp_{2m}(q)$. 

Therefore, we can assume that $d \neq 1$. By Lemma \ref{e}, the number $e$ of edges in the Brauer tree $\tau_0(G)$ is equal to $|N_G(P)/C_G(P)|$.

We will show that $e \geq d \geq 3$, that gives a contradiction with the assumption, because $\tau_0(G)$ is a line by Lemma \ref{polyg}.

The structure of $P$ depends on $d$. 

(1) \emph{Even $d$}. According to \cite[Table 1]{Stat}, $P$ coincides with a Sylow $p$-subgroup of the ambient group $\GL_{2m}(q)$. Since $P$ is non-trivial and cyclic, we conclude that $m< d\leq 2m$. Further, $m\geq 2$ yields $d\geq 4$.

Under a suitable choice of the matrix $W$, the group $\Sp_d(q)\times \Sp_{2m-d}(q)$ is embedded into $\Sp_{2m}(q)$ as a block diagonal (see \cite[Remark 3.2]{Alali} for details); hence, $P$ can be chosen in the left upper corner $G_d = \Sp_d(q)$. 
Now taking into account Lemma \ref{e_subgroup}, it suffices to show that $e_d= |N_{G_d}(P)/C_{G_d}(P)| \geq d$.

Let $\al$ be a generator of $P$. According to \cite[Lemma 4.6]{Stat}, $N_{G_d}(P)/C_{G_d} (P)$ is a cyclic group generated by an element $y$ acting by conjugation as $\al^y= \al^d$. Since this action has order $d$, we obtain $e_d \geq d$. Thus, $e \geq e_d \geq 4$ as desired.

(2) \emph{Odd $d$}. According to \cite{Stat}, the order of $P$ is equal to the order of a Sylow $p$-subgroup $P'$ of $\GL_m(q)$. We may assume that $P$ is the image of $P'$ under embedding \ref{emb_sp}.
Since $P'$ is cyclic, we have $m/2< d\leq m$. Thus, $d\geq 3$.

It remains to show that $e\geq d$. As above, we may assume that $m=d$. Consider an element $y'\in \GL_d(q)$ which acts by conjugation on $P'$ as an automorphism of order $d$. The diagonal image of this element belongs to $N_G(P)$ and acts as an automorphism of order $d$ on this subgroup.
\end{proof}

\vspace{10mm}
\subsection{Unitary groups}\label{S-unitary}

Let us denote by $\ov{\phantom{r}}$ the involution $a \mapsto a^q$ of the field $\F_{q^2}$, and let $V$ be an $n$-dimensional vector space over this field. Then there exists an unique non-singular conjugate-symmetric sesquilinear form $f: V\times V\to \F_{q^2}$, i.e. $f(u,v)= \ov{f(v,u)}$ for all $u, v\in V$.
If $f$ is given by a matrix $W$, then $W= \ov W^t$. For instance, $W$ can be the identity matrix.

The general unitary group $\GU_n(q^2)$ consists of matrices $A\in \GL_n(q^2)$ preserving $f$, i.e. $A W\ov A^{\, t}= W$. The order is
$$
|\GU_n(q^2)|= q^{n(n-1)/2}\cdot (q+1)\cdot (q^2-1)\cdots\ldots\cdot (q^n- (-1)^n)\,.
$$

The unitary matrices of determinant 1 form a normal subgroup in $\GU_n(q^2)$ of index $q+1$, called the special unitary group $\SU_n(q^2)$. The center $Z$ of $\SU_n(q^2)$ consists of scalar matrices, and $|Z| = (n, q+1)$. The quotient group $\PSU_n(q^2)= \SU_n(q^2)/Z$ is called the projective special unitary group. If $n\geq 3$, then this group is simple, except $\PSU_3(2^2)$.

Note that $\PSU_2(q^2)\cong \PSL_2(q)$. Thus, it suffices to consider $n\geq 3$. 

\begin{proposition}\label{P-unitary}

Let $G$ be one of the groups $\PSU_{n}(q^2)$ or $\SU_{n}(q^2)$, where $n \geq 3$, and let $p$ be a prime dividing $|G|$. Then $G \in \mathfrak{X}_p$ if and only if $n=3$, $p>2$ and $p$ divides $q-1$. 

\end{proposition}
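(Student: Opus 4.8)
The plan is to follow the pattern of the linear case in Subsection~\ref{linear}, with the parameter $\ord_p(q)$ replaced by $d=\ord_p(-q)$, which controls the $p$-local structure of unitary groups; note that $p\mid q^{i}-(-1)^{i}$ exactly when $d\mid i$. First I would dispose of the degenerate primes. By Lemma~\ref{2-sylow} a non-solvable group has non-cyclic Sylow $2$-subgroup, and the two solvable groups $\SU_3(2^2)$ and $\PSU_3(2^2)$ have Sylow $2$-subgroup isomorphic to $Q_8$, so $p=2$ never occurs. If $p\mid q$, then $\PSU_n(q^2)$ has a non-abelian (so non-cyclic) Sylow $p$-subgroup by Lemma~\ref{p_div_q}, and the same holds for $\SU_n(q^2)$ since $p\nmid|Z(\SU_n(q^2))|=(n,q+1)$. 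Finally, if $p\mid q+1$, then the subgroup of diagonal matrices with norm-one entries contains a copy of $(C_{q+1})^{\,n-1}$, hence of $(C_p)^{\,n-1}$, in $\SU_n(q^2)$, so a Sylow $p$-subgroup of $\SU_n(q^2)$ is non-cyclic for $n\ge 3$; this descends to $\PSU_n(q^2)$ unless $p\mid(n,q+1)$, which under $n\ge 3$ forces $n=3$, $p=3$, $3\mid q+1$, and there one checks directly that a Sylow $3$-subgroup of $\PSU_3(q^2)$ is non-cyclic (it is elementary abelian of order at least $9$; this also covers the non-simple group $\PSU_3(2^2)$). In all these cases $p\nmid q-1$, in agreement with the statement. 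Hence we may assume $p>2$ and $p\nmid q(q+1)$, and then $d=\ord_p(-q)\ge 2$, with $d=2\iff p\mid q-1$.

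Next I would establish the unitary analogue of Lemma~\ref{GL_sylow_cyclic}: a Sylow $p$-subgroup $P$ of $\GU_n(q^2)$ is cyclic and non-trivial if and only if $d\le n<2d$, and in that case $P$ lies in a maximal torus having a cyclic factor of order $q^{d}-(-1)^{d}$, with $|N_{\GU_n(q^2)}(P)/C_{\GU_n(q^2)}(P)|=d$. This can be read off from Stather's table~\cite[Table~1]{Stat} together with the standard description of the maximal tori of $\GU_n$ and of their normalizers, the relevant Weyl element acting on the $p$-part of the cyclic torus factor as $x\mapsto x^{-q}$, which has order exactly $d$. Since $p\nmid q+1$, we have $p\nmid[\GU_n(q^2):\SU_n(q^2)]$ and $p\nmid|Z(\SU_n(q^2))|$, so the three groups $\GU_n(q^2)$, $\SU_n(q^2)$, $\PSU_n(q^2)$ have isomorphic Sylow $p$-subgroups and the same principal $p$-block; moreover a Frattini-argument computation exactly as in Lemma~\ref{GL-SL-tree} — the factor $q+1$ cancelling between normalizer and centralizer because the determinant maps the relevant torus onto the norm-one group $C_{q+1}$ — yields $e_0(\GU_n(q^2))=e_0(\SU_n(q^2))=e_0(\PSU_n(q^2))=d$. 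Finally $\tau_0$ is a line: by Lemma~\ref{polyg} this holds for $\GU_n(q^2)$ when $q$ is odd and for $\SU_n(q^2)$ when $q$ is even, and it passes to $\SU_n(q^2)$ and then to $\PSU_n(q^2)$ by Lemmas~\ref{L-tree} and~\ref{e_subgroup}, the relevant indices being prime to $p$.

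It then remains only to assemble the conclusion. Under the standing assumptions, $G\in\mathfrak{X}_p$ if and only if $P$ is non-trivial and cyclic and the line $\tau_0(G)$, which has $d$ edges, has diameter at most $2$; that is, if and only if $d\le n<2d$ and $d\le 2$. Since $d\ge 2$ this forces $d=2$, and since $n\ge 3$ it forces $n=3$; conversely $d=2$ and $n=3$ give $2\le n<4$ and a line with two edges. Recalling that $d=2\iff p\mid q-1$, this is exactly the condition $n=3$, $p>2$, $p\mid q-1$.

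The step I expect to be the main obstacle is pinning down the two quantitative inputs for $\GU_n(q^2)$ — the cyclicity criterion $d\le n<2d$ and the equality $|N_{\GU_n(q^2)}(P)/C_{\GU_n(q^2)}(P)|=d$ — since these rest on the explicit structure of the maximal tori of unitary groups and of their Weyl normalizers; the rest is bookkeeping, apart from the slightly delicate exceptional case $n=3$, $p=3$, $3\mid q+1$.
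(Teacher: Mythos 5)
Your route is genuinely different from the paper's and is, in outline, viable. The paper splits the proof: for $n=3$ it simply quotes Geck \cite{Geck90} for the explicit Brauer tree of $\SU_3(q^2)$ (star iff $p\mid q-1$), and for $n\ge 4$ it runs a case analysis on $\ord_p(q)\bmod 4$, using Stather's table and diagonal embeddings of $\GU_s\times\GU_{n-s}$ or $\GL_m(q^2)$ to get only \emph{lower} bounds $e\ge 3$ (resp.\ $e\ge 4$). You instead posit a single uniform input, the unitary analogue of Lemma~\ref{GL_sylow_cyclic}: with $d=\ord_p(-q)$, the Sylow $p$-subgroup of $\GU_n(q^2)$ is nontrivial cyclic iff $d\le n<2d$, and then $|N_{\GU_n(q^2)}(P)/C_{\GU_n(q^2)}(P)|=d$. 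This buys uniformity (no mod-$4$ split, and the base case $n=3$ needs no external tree computation), but it makes the entire ``if'' direction rest on the \emph{exact} value $e_0(\GU_3(q^2))=2$ for $p\mid q-1$, i.e.\ on an upper bound for $|N/C|$, which you assert ``can be read off'' from Stather and the torus normalizers but do not prove. Note that the paper never needs such an upper bound anywhere: Stather is used only for lower bounds, and the star at $n=3$, $p\mid q-1$ is imported from \cite{Geck90}. Your key claim is true (it follows, e.g., from Fong--Srinivasan \cite{FS90} or from the torus-normalizer description you sketch), and you flag it as the main obstacle, but as written it is a citation-level assertion rather than an argument and must be either proved or referenced precisely; the rest of your descent machinery ($\GU_n\to\SU_n\to\PSU_n$ via Frattini, Lemmas~\ref{L-tree} and \ref{e_subgroup}, Lemma~\ref{polyg} for the line shape) matches the paper's toolkit.

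There is also one genuine, though easily repaired, gap in your elimination of $p\mid q+1$: you claim that $p\mid(n,q+1)$ with $n\ge 3$ forces $n=3$, $p=3$, which is false (e.g.\ $n=5$, $p=5$, or $n=6$, $p=3$), so non-cyclicity of the Sylow $p$-subgroup of $\PSU_n(q^2)$ is not established when $p\mid q+1$, $p\mid n$ and $n\ge 4$. The fix is one line: the torus gives an abelian $p$-subgroup of rank $n-1\ge 3$ in $\SU_n(q^2)$, and factoring out the cyclic central subgroup $P\cap Z$ lowers the rank by at most one, so the image in $\PSU_n(q^2)$ still has rank at least $2$. (Also, your parenthetical that the Sylow $3$-subgroup of $\PSU_3(q^2)$ is elementary abelian fails when $9\mid q+1$; only non-cyclicity is needed, and that does hold, as the paper asserts.)
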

\begin{proof}
As before, we assume that $p \neq 2$ and $p \nmid q$. Write $d = \ord_p(q)$, and $s = \ord_p(q^2)$.

We first consider the case $n=3$. If $p=3$ and $p\mid q+1$, then Sylow subgroups of $\PSU_3(q^2)$ and $\SU_3(q^2)$ are not cyclic. Overwise, the Sylow subgroups are cyclic, and the principal $p$-block of $\PSU_3(q^2)$ coincides with the principal $p$-block $B_0$ of $ \SU_3 (q^2)$, because $B_0$ is annihilated by the center of the group $\SU_3 (q^2)$. It follows from  \cite{Geck90} that the Brauer tree of $B_0$ is a start if and only if $p \neq 2$ and $p \mid q-1$.

Now set $n\geq 4$. In this case, if $p$ divides $q\pm 1$, Sylow $p$-subgroups of $\PSU_n(q^2)$ and $\SU_n(q^2)$ are not cyclic. 

Assume that $p \nmid q\pm 1$. Then $2< d\leq 2n$. Further, the principal block of $\PSU_n (q^2)$ coincides with the principal block $B_0$ of the group $\SU_n(q^2)$. 

A Sylow $p$-subgroup $P$ of $H=\SU_n(q^2)$ coincides with a Sylow $p$-subgroup of $G=\GU_n(q^2)$. Moreover, $|N_G(P)/C_G(P)| = |N_H(P)/C_H(P)|$. Therefore, $\tau_0(H) = \tau_0(G)$. This tree is a line by Lemma \ref{polyg} (for odd and even $q$). Thus, it suffices to show that the number of edges in $\tau_0(G)$ is greater than $2$.

There are two possibilities for $P$ in depending on $d$.

(1) \emph{$d\equiv 2 \pmod 4$}. According to \cite{Stat}, $P$ is also a Sylow $p$-subgroup of the ambient group $\GL_n(q^2)$. 
Because $P$ is cyclic, we conclude that $n/2< s = d/2 \leq n$. In particular, $s \geq 3$. 

The group $\GU_s(q^2)\times \GU_{n-s}(q^2)$ is embedded into $\GU_n(q^2)$, and $P$ can be chosen in the left upper corner $G_s = \GU_s(q^2)$. As well as for symplectic groups, it is easy to show that the number of edges
$e_0(G) \geq e_0(G_s) = |N_{G_s}(P)/C_{G_s}(P)| \geq s$, as desired.

(2) \emph{$d\equiv 0, 1, 3 \pmod 4$}. Let $n= 2m+ \eps$, where $\eps= 0, 1$. According to \cite{Stat}, the order of $P$ equals the order of a Sylow $p$-subgroup $P'$ of $\GL_m(q^2)$. If $\eps=0$, then choosing $W= \bsm 0&I_m\\I_m&0\esm$, we obtain the embedding from $\GL_m(q^2)$ into $\GU_n(q^2)$, which sends $A$ to $\bsm A&0\\0& \ov{A}^{\,-t}\esm$. And a similar embedding takes place for $\eps=1$ if we add $1$ to the lower right corner of $W$. 

We first consider the case when $d$ is odd; hence, $s= d> 2$. Because $P'$ is cyclic and non-trivial, we conclude that $m/2< s\leq m$. As above, to estimate $e_0(G)$, we may assume that $n= 2d$. Choose an element in $\GL_d(q^2)$ which acts by conjugation on $P'$ as an automorphism of order $s$. By multiplying by a constant we may assume that this element has determinant $1$. Expanding diagonally, we conclude that $e\geq s> 2$, as desired.

It remains to consider the last case when $d$ is divisible by $4$. In this case, $d= 2s$. If $d> 4$, then, using the diagonal embedding, we conclude that $e\geq s > 2$. 

Finally, if $d=4$, i.e. $G = \GU_4(q^2)$ and $p\mid q^2+1$, then the generator $A=\al'$ for $P'$ can be chosen such that $A\cdot \ov A^{\, t}= I_2$. As above, using the diagonal embedding we obtain that $e\geq 2$. But also the matrix $W$ normalizers $P$, hence $e\geq 4$, as desired.
\end{proof}

\vspace{10mm}
\subsection{Orthogonal groups in odd dimension}\label{S-ort-odd}

Let $n= 2m+1$, where $m\geq 1$, and let $V$ be an $n$-dimensional vector space over  $\mathbb{F}_q$. The general orthogonal group $\GO_n(q)$ consists of invertible matrices of order $n$ which preserve a scalar product given by a matrix $W$, i.e. $A\in \GO_n(q)$ if and only if $AWA^t = W$. For instance, we may take \rule{0pt}{6mm}$W= \bsm 0&I_m&0\\ I_m&0&0\\0&0&1\esm$. It gives the embedding 
\begin{equation}\label{eq-GL-GO}
\GL_m(q) \to \GO_{2m+1}(q), ~ A \mapsto \bsm A&0&0\\0&A^{-t}&0\\0&0&1\esm. 
\end{equation}

\vspace{2mm}

The group $\GO_n(q)$ contains a special orthogonal group $\SO_n(q)$, consisting of matrices with determinant $1$. The derived subgroup of $\SO_n(q)$ is denoted by $\Om_n(q)$. 

If $q$ is even, then $\Om_{2m+1}(q) \cong \PSp_{2m}(q)$ for any $m \geq 1$. If $q$ is odd, then $\Om_3(q)\cong \PSL_2(q)$ and $\Om_5(q)\cong \PSp_4(q)$, according to \cite[p. 43]{K-L}.

Thus, it suffices to consider the groups $\Om_{2m+1}(q)$ for odd $q$ and $m \geq 3$. They are all simple, and the order is
$$
|\Om_{2m+1}(q)|= \frac{\scriptstyle 1}{\scriptstyle 2}\cdot q^{m^2}\cdot (q^2-1)\cdot (q^4-1)\cdot\ldots \cdot (q^{2m}-1)\,.
$$

\begin{lemma}\label{GO-SO}
Suppose that $G = \GO_{2m+1}(q)$ and $H = \SO_{2m+1}(q)$, where $q$ is odd. Suppose that Sylow $p$-subgroup $P$ of $H$ is cyclic and non-trivial, and that $p$ does not divided $q$. Then the Brauer trees of the principal blocks of $G$ and $H$ are the same.
\end{lemma}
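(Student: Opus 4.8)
The plan is to exploit the fact that, in odd dimension over a field of odd characteristic, $\GO_{2m+1}(q)$ splits as an internal direct product $\SO_{2m+1}(q) \times \langle -I\rangle$ with $\langle -I\rangle$ central of order $2$, and then to use that this second factor is a $p'$-group. This is the orthogonal analogue of Lemma \ref{GL-SL-tree}, but easier, since here one genuinely has a direct product.

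First I would record the structure of $G$ over $H$. Every $A \in \GO_{2m+1}(q)$ has $\det(A)^2 = 1$, so $\det A = \pm 1$ and $[G:H] \le 2$; since $q$ is odd the scalar matrix $-I$ lies in $G$ with $\det(-I) = (-1)^{2m+1} = -1$, hence $-I \notin H$ and $[G:H] = 2$. As $-I$ is central of order $2$ and $\langle -I\rangle \cap H = 1$, this gives $G = H \times \langle -I\rangle$; in particular $P$ is a cyclic Sylow $p$-subgroup of $G$ as well, and $N_G(P) = N_H(P) \times \langle -I\rangle$, $C_G(P) = C_H(P) \times \langle -I\rangle$.

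Next, the hypotheses force $p$ to be odd: $H$ is non-solvable (it contains the simple group $\Om_{2m+1}(q)$), so by Lemma \ref{2-sylow} its cyclic subgroup $P$ cannot be a Sylow $2$-subgroup. Hence $\langle -I\rangle$ is a central $p'$-subgroup of $G$, so $\langle -I\rangle \le O_{p'}(G)$ and, since the kernel of $B_0(G)$ equals $O_{p'}(G)$, the block $B_0(G)$ is inflated from $B_0(H)$ under the isomorphism $G/\langle -I\rangle \cong H$. This identification matches up the ordinary and Brauer irreducibles, hence the decomposition matrices, so $\tau_0(G) = \tau_0(H)$. Equivalently, and more in the style of Lemma \ref{GL-SL-tree}, one may combine Lemma \ref{L-tree}(1) — which gives that $\tau_0(G)$ is similar to $\tau_0(H)$, as $[G:H] = 2$ is prime to $p$ — with the equality of edge numbers $e_G = |N_G(P)/C_G(P)| = |N_H(P)/C_H(P)| = e_H$ coming from the direct-product decompositions above; similar Brauer trees with the same number of edges coincide.

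I do not expect a genuine obstacle: the only step requiring care is the verification that $[\GO_{2m+1}(q):\SO_{2m+1}(q)] = 2$ and the consequent splitting $G = H \times \langle -I\rangle$ in odd characteristic — everything after that is formal.
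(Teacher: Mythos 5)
Your proof is correct, but it takes a genuinely different (and cleaner) route than the paper. The paper's own argument invokes Lemma \ref{L-tree} to get similarity of $\tau_0(G)$ and $\tau_0(H)$, then shows $|N_G(P)/C_G(P)| = |N_H(P)/C_H(P)|$ by a Frattini-type index count together with the explicit element $Z=\mathrm{diag}(1,\dots,1,-1)$, which has determinant $-1$ and centralizes a suitably chosen $P$ (sitting in the upper-left $2m\times 2m$ block for the standard form). You instead observe that in odd dimension $-I$ already does the job: it is central, has determinant $-1$, and gives the splitting $G = H \times \langle -I\rangle$. Since $P$ cyclic forces $p$ odd (your non-solvability appeal to the simplicity of $\Om_{2m+1}(q)$ needs $m\geq 3$, which is exactly the range in which the lemma is applied), $\langle -I\rangle$ is a central $p'$-subgroup, so $B_0(G)$ is inflated from $B_0(H)$ and the two blocks have identical decomposition matrices — a stronger conclusion than equality of trees, obtained without choosing $P$ compatibly with the form and without the similarity lemma. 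Your alternative route (similarity via Lemma \ref{L-tree} plus $N_G(P)=N_H(P)\times\langle -I\rangle$, $C_G(P)=C_H(P)\times\langle -I\rangle$) is essentially the paper's argument with $-I$ replacing $\mathrm{diag}(1,\dots,1,-1)$, and is equally valid.
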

\begin{proof}
Since $\tau_0(H)$ is similar to $\tau_0(G)$ by Lemma \ref{L-tree}, it is sufficient to show that $|N_G(P)/C_G(P)| = |N_H(P)/C_H(P)|$.

The equality $|G| = |H|/2$ yields $|N_H(P)| = |N_G(P)|/2$. To deduce the same equality for centralizers of $P$, it suffices to show that there exists a matrix $Z \in G$ such that $\det(Z) = -1$ and $Z$ centralizes $P$. One may take $Z = \mathrm{diag}(1,1,...,1,-1)$.
\end{proof}

\begin{theorem}\label{T-ort}
Let $G$ be one of the groups $\Om_{2m+1}(q)$, $\SO_{2m+1}(q)$ or $\GO_{2m+1}(q)$, where $m\geq 3$ and $q$ is odd. Let $p$ be a prime dividing the order of $G$. Then $G \notin \mathfrak{X}_p$.
\end{theorem}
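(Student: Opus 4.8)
The plan is to reproduce, for odd-dimensional orthogonal groups, the two-step strategy already used for symplectic groups in Proposition~\ref{symp} and for unitary groups in Proposition~\ref{P-unitary}: first show that $\tau_0(G)$ is a line, then show that it has at least three edges, so that it cannot be a star. I would begin with the standard reductions. For $m\geq 3$ the group $\Om_{2m+1}(q)$ is simple, hence non-solvable, so by Lemma~\ref{2-sylow} a non-trivial cyclic Sylow $p$-subgroup forces $p\neq 2$; and since $\Om_{2m+1}(q)\not\cong\PSL_2(q)$, Lemma~\ref{p_div_q} forces $p\nmid q$. Because $p$ is odd and the indices $[\,\SO_{2m+1}(q):\Om_{2m+1}(q)\,]$ and $[\,\GO_{2m+1}(q):\SO_{2m+1}(q)\,]$ are powers of $2$, the three groups share a common Sylow $p$-subgroup $P$, so it suffices to treat them together. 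That $\tau_0(G)$ is a line follows from Lemma~\ref{polyg} for $\SO_{2m+1}(q)$, from Lemma~\ref{GO-SO} for $\GO_{2m+1}(q)$, and from Lemma~\ref{L-tree}(2) for $\Om_{2m+1}(q)$, which is normal of index $2$ in $\SO_{2m+1}(q)$. A line with $e$ edges is a star precisely when $e\leq 2$; and by Lemma~\ref{e_subgroup} applied along the chain $\Om_{2m+1}(q)\trianglelefteq\SO_{2m+1}(q)\trianglelefteq\GO_{2m+1}(q)$ of $p'$-indices, $e_0(\Om_{2m+1}(q))$ divides $e_0(\SO_{2m+1}(q))=e_0(\GO_{2m+1}(q))$. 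Hence everything reduces to proving
\[
e_0\bigl(\Om_{2m+1}(q)\bigr)\ \geq\ 3 .
\]

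I would prove this by a case analysis on $d=\ord_p(q)$, in the spirit of the two previous families: in each case one exhibits an element of $N_{\Om_{2m+1}(q)}(P)$ acting on $P$ as an automorphism of order at least $3$. If $d\leq 2$ then $p\mid q^2-1$, so $\Om_3(q)\cong\PSL_2(q)$ has a non-trivial cyclic Sylow $p$-subgroup; as $m\geq 3$, the orthogonal direct sum of two copies of $\Om_3(q)$ with a non-degenerate $(2m-5)$-dimensional space embeds $\PSL_2(q)\times\PSL_2(q)$ into $\Om_{2m+1}(q)$ (the spinor norm being additive over orthogonal sums, each $\Om$-factor maps into $\Om$), so $P$ would be non-cyclic — this case does not occur. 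For $d\geq 3$ I would combine Stather's description of $P$ in \cite[Table~1]{Stat} with the embedding \eqref{eq-GL-GO} of $\GL_m(q)$ (and of its derived subgroup $\SL_m(q)\subseteq\Om_{2m+1}(q)$) and the block-diagonal embeddings $\Om^{\pm}_{k}(q)\times\Om_{2m+1-k}(q)\hookrightarrow\Om_{2m+1}(q)$ (cf.\ \cite[Remark~3.2]{Alali}) to place $P$ inside a small factor and produce the required automorphism.

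When $d$ is odd, $P$ is a Sylow $p$-subgroup of $\GL_m(q)$ with $m/2<d\leq m$ (so $d\geq 3$ and $m\geq 3$); realising it via \eqref{eq-GL-GO} as the $p$-part of a degree-$d$ Singer torus inside $\SL_m(q)$, the Frobenius $x\mapsto x^q$ of that torus has determinant $1$ (because $d$ is odd), hence lies in $\SL_m(q)\subseteq\Om_{2m+1}(q)$, normalises $P$, and acts on it with order $\ord_{|P|}(q)\geq d\geq 3$. When $d$ is even one is reduced to a minus-type block $\Om^{-}_d(q)$ with $d/2\leq m<d$ (so $d\geq 4$): for $d\geq 6$ the relative Weyl group of the relevant $\Phi_d$-torus has order $\geq d/2\geq 3$ (again produced by a Singer/Frobenius argument), while the borderline $d=4$ forces $m=3$, i.e.\ $G=\Om_7(q)$ with $p\mid q^2+1$, and there I would embed the block $\Om^{-}_6(q)$ — which coincides, modulo its centre (of order prime to $p$), with $\PSU_4(q^2)$ — and invoke the case $G=\GU_4(q^2)$, $p\mid q^2+1$ of Proposition~\ref{P-unitary}, where $e\geq 4$; since $\Om^{-}_6(q)$ and $\Om_7(q)$ have the same Sylow $p$-subgroup here, this yields $e_0(\Om_7(q))\geq 4$. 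In every case $e_0(\Om_{2m+1}(q))\geq 3$, contradicting that $\tau_0$ is a line with at most two edges; hence $G\notin\mathfrak{X}_p$.

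The main obstacle is the feature that makes the orthogonal case heavier than the symplectic one: the tower $\Om\subsetneq\SO\subsetneq\GO$ together with the spinor norm. One must verify that the normalising elements constructed above can really be chosen inside $\Om_{2m+1}(q)$ itself, not merely in $\GO_{2m+1}(q)$ — equivalently, control the $2$-group $N_{\GO_{2m+1}(q)}(P)/N_{\Om_{2m+1}(q)}(P)$. This is why I keep only the uniform lower bounds $e\geq d$ for $d$ odd and $e\geq 4$ for $d$ even, rather than computing $e_0$ exactly: a stray factor of $2$ still leaves $e\geq 3$ in all cases, and the single family in which such a loss would be fatal is exactly $\Om_7(q)$ with $p\mid q^2+1$, which is therefore isolated and handled through the exceptional isomorphism $\POM^{-}_6(q)\cong\PSU_4(q^2)$.
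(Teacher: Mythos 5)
Your proposal is correct in outline and shares the paper's skeleton --- reduce to $p$ odd and $p\nmid q$, show $\tau_0$ is a line via Lemma~\ref{polyg}, Lemma~\ref{GO-SO} and Lemma~\ref{L-tree}, then bound the number of edges from below using Stather's description of the Sylow subgroup \cite{Stat} and block-diagonal embeddings --- but it resolves the critical cases by a genuinely different route. The paper never bounds $e_0(\Om_{2m+1}(q))$ directly: it bounds $e_0(\GO_{2m+1}(q))$, accepts a possible factor-two loss on descent to $\Om$, and therefore must treat the borderline situations separately: for $d=3$, $m\in\{3,4,5\}$ it invokes Lemma~\ref{independ_q} and a GAP computation giving $e=6$, and for $\SO_7(q)$, $p\mid q^2+1$ it computes the entire Brauer tree from Carter's tables \cite{Car} and checks that the exceptional vertex of the $4$-edge line is not central. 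You instead build the normalizing element inside $\Om$ itself when $d$ is odd: the Frobenius of the degree-$d$ Singer torus has determinant $(-1)^{d-1}=1$, so it lies in $\SL_m(q)$, whose image under \eqref{eq-GL-GO} lands in $\Om_{2m+1}(q)$, giving $e_0(\Om)\geq d\geq 3$ uniformly and eliminating the GAP computations; and you replace the Carter computation for $\Om_7(q)$, $p\mid q^2+1$ by the exceptional isomorphism $\POM^-_6(q)\cong\PSU_4(q^2)$ together with the bound $e\geq 4$ for $\GU_4(q^2)$, $p\mid q^2+1$, established inside the proof of Proposition~\ref{P-unitary} (this is legitimate, as that proposition precedes and does not depend on Theorem~\ref{T-ort}). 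Both routes are sound and give consistent answers ($e_0(\Om_7)=4$ in the delicate case); yours is more uniform and computer-free, while the paper's yields exact data (the value $e=6$ for $d=3$ and the precise tree for $\SO_7$). Two assertions in your sketch should be given a line of justification in a write-up: the containment of the image of $\SL_m(q)$ in $\Om_{2m+1}(q)$ (it follows because $\SL_m(q)$ is perfect for $m\geq 3$ and the spinor norm is a homomorphism onto a group of order $2$), and the claim that for even $d\geq 6$ one retains $e_0(\Om)\geq d/2\geq 3$ --- the cleanest way is not a new Weyl-group computation but the chain you implicitly rely on elsewhere: $e_0(\GO)\geq d$ by \cite[Lemma 4.6]{Stat}, $e_0(\SO)=e_0(\GO)$ by Lemma~\ref{GO-SO}, and the loss on passing to $\Om$ is at most the index $2$ by Lemma~\ref{e_subgroup}.
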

\begin{proof}
Assume that $G \in \mathfrak{X}_p$. Then a Sylow $p$-subgroup $P$ of $G$ is cyclic, and $p \neq 2$, $p \nmid q$. 

Since all indices in the normal series $\Om_n(q) \subset \SO_n(q) \subset \GO_n(q)$ are equal to $2$, the Brauer trees of the principal blocks of these groups are similar to each other. Moreover, by Lemma \ref{GO-SO}, the principal blocks of $\SO_n(q)$ and $\GO_n(q)$ have the same Brauer trees. Further, since this tree is a line (by Lemma \ref{polyg}), it suffices to show that the number $e$ of edges in  $\tau_0(\GO_n(q))$ is larger than $4$, or equals $4$ but the exception vertex is not in the center.

Write $d = \ord_p(q)$.

(1) \emph{Even $d$}. In this case, $P$ can be chosen as a subgroup of the ambient group $\GL_{2m}(q)$. Because $P$ is cyclic and non-trivial, we obtain $m< d\leq 2m$; hence, $d> 3$. Using \cite[Lemma 4.5]{Stat}, we derive that $e\geq d$. Thus, if $m\geq 4$, then $d\geq 6$ implies $e\geq 6$, a contradiction.

Consider the remaining case $m=3$ and $d=4$, i.e. $G = \SO_7(q)$, where $p \mid q^2+1$. 
Using \cite[p. 466--467]{Car}, we can calculate the Brauer tree of the principal block. It has the following shape, where the numbers near vertices show the degrees and parameterization symbols of characters.
$$
\vcenter{%
\def\labelstyle{\displaystyle}
\xymatrix@C=68pt@R=10pt{%
*+={\circ}\ar@{-}[r]\ar@{}+<0pt,-14pt>*{_{\bsm 3\\\varnothing\esm}}\ar@{}+<0pt,10pt>*{_1}&
*+={\circ}\ar@{-}[r]\ar@{}+<0pt,-10pt>*{_{\bsm 0&2\\2&\esm}}\ar@{}+<-3pt,10pt>*{_{q^2(q^4+q^2+1)}}&
*+={\circ}\ar@{-}[r]\ar@{}+<0pt,-10pt>*{_{\bsm 0&1&2\\1&3\esm}}\ar@{}+<0pt,10pt>*{_{q^4(q^3+1)(q+1)/2}}&
*+={\bullet}\ar@{-}[r]\ar@{}+<0pt,-10pt>*{_{}}\ar@{}+<3pt,10pt>*{_{(q^6-1)(q^2-1)}}&
*+={\circ}\ar@{}+<0pt,-10pt>*{_{\bsm &1\\0&1&2&3\esm}}\ar@{}+<4pt,10pt>*{_{q^4(q^3-1)(q-1)/2}}\\
}}
$$

\vspace{3mm}

The character degrees are comparable with $1, -1, 1, 4, 1$ modulo $p$; therefore, the second on the right character is exceptional (see \cite[Theorem 7.2.16]{Feit}).

(2) \emph{Odd $d$}. In this case, the order of $P$ equals the order of a Sylow $p$-subgroup $P'$ of $\GL_m(q)$. Thus, $P$ is the image of $P'$ via the embedding \eqref{eq-GL-GO}.

Since $P$ is cyclic, we conclude that $m/2< d\leq m$; hence, $d\geq 3$. Calculating the normalizer of $P'$ in $\GL_d(q)$, we get $e\geq d$. 
If $m > 5$, then $d > 3$ that gives the desired. 

Now consider the remaining cases with $3 \leq m \leq 5$ and $d= 3$ (i.e. $p\mid q^2+q+1$). It follows from Lemma \ref{independ_q} that the number of edges $e_0(G)$ in the principal block of $G = \SO_{2m+1}(q)$ depends on $d$ only. For instance, we may take $q=3$ and $p=13$. A calculation in GAP \cite{gap} gives $e_0(G) =|N_G(P)/C_G(P)| = 6$ for $m= 3, 4, 5$.

\end{proof}

\subsection{Orthogonal groups in even dimension.}\label{S-ort-even}

Write $n=2m$, where $m\geq 1$. A general orthogonal group $\mathrm{GO}_n^\pm(q)$ consists of invertible matrices of order $n$, preserving the quadratic form correspondingly $Q^{(+)} = \bsm 0 & I_m \\ 0 & 0 \esm$ and \rule{0pt}{7mm}$Q^{(-)} = \bsm 0&I_{m-1}&0&0\\0&0&0&0\\0&0&1&a\\0&0&0&b\esm$, where $a= \ga+ \ga^q$, $b= \ga^{q+1}$, and $\ga$ is a primitive element of $\F_{q^2}$.
The corresponding bilinear form for $Q^{(\pm)}$ is $W = Q + Q^T$.

The conformal orthogonal group $\CO^\pm_n(q)$ consists of invertible matrices which preserve this form up to a multiplicative constant. The special orthogonal group $\SO^+_n(q)$ is a subgroup of $\GO_n^\pm(q)$ of matrices with determinant 1. The conformal special orthogonal group $\CSO^\pm_{2m}(q)$ consists of matrices $A$ such that the multiplicative constant $\lam= \lam(A)$ satisfies $\lam^m= \det A$ (see \cite[p. 13]{Liv}). 

For odd $q$, we have the following diagram of normal subgroups of $\CO^\pm_n(q)$.

$$
\vcenter{%
\xymatrix@C=20pt@R=17pt{%
&*+={\circ}\ar@{-}[dl]_2\ar@{-}[dr]^{q-1}\ar@{}+<0pt,10pt>*{_{\CO^\pm_n(q)}}&\\
*+={\circ}\ar@{-}[dr]_{q-1}\ar@{}+<-22pt,0pt>*{_{\CSO^\pm_n(q)}}&&
*+={\circ}\ar@{-}[dl]_2\ar@{}+<20pt,0pt>*{_{\GO^\pm_n(q)}}\\
&*+={\circ}\ar@{-}[d]_2\ar@{}+<20pt,-3pt>*{_{\SO^\pm_n(q)}}&\\
&*+={\circ}\ar@{}+<18pt,-4pt>*{_{\OM^\pm_n(q)}}&
}}
$$

\vspace{3mm}

For even $q$, we have $\GO^\pm_n(q) \cong \SO^\pm_n(q)$ and $\CO^\pm_n(q) \cong \CSO^\pm_n(q)$.

The centre $Z$ of $\OM^\pm_n(q)$ consists of scalar matrices, and $|Z| = (4,q^m-1)/2$. The factor group $\POM^\pm_n(q) = \OM^\pm(q)/Z$ is called a projective orthogonal group. It has the following order.
$$
|\POM^\pm_{2m}(q)|= \frac{\scriptstyle 1}{\scriptstyle (4, q^m-1)}\cdot q^{m(m-1)/2}\cdot (q^m \mp 1)\cdot \prod_{i=1}^{m-1} (q^{2i}-1)\,.
$$

Note (see \cite[p. 43]{K-L}) that the group $\OM^+_2(q)$ is isomorphic to the dihedral group $D_{2(q-1)}$, which is solvable.

Further, the group $\POM^+_4(q)\cong 2. (\PSL_2(q)\times \PSL_2(q))$ is not simple. The simple groups $\POM_4^-(q)\cong \PSL_2(q^2)$ have been already considered.

If $m\geq 3$, all groups $\POM^\pm_{2m}(q)$ are simple. We already know the answer for $\POM^+_6(q) \cong \PSL_4(q)$ and $\POM_6^-(q)\cong \PSU_4(q^2)$. Thus, it is remaining to consider the groups with $m \geq 4$.

\begin{theorem}\label{T-ort+-}
Let $G$ be one of groups $\POM^\pm_{2m}(q)$, $\OM^\pm_{2m}(q)$, $\SO^\pm_{2m}(q)$, $\GO^\pm_{2m}(q)$, where $m\geq 4$, and let $p$ be a prime dividing the order of $G$. Then $G \notin \mathfrak{X}_p$. 
\end{theorem}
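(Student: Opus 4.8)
The plan is to follow exactly the template that worked for Theorems \ref{T-ort} and \ref{symp}: reduce to the general orthogonal group, use that the Brauer tree is a line, and produce a lower bound on the number $e$ of edges showing $e > 4$ (or $e = 4$ with the exceptional vertex off the centre). First I would invoke Lemmas \ref{2-sylow} and \ref{p_div_q} to assume $p \neq 2$ and $p \nmid q$; Lemma \ref{polyg} then gives that $\tau_0(\GO^\pm_{2m}(q))$ is a line (for both odd and even $q$). Next, walking up the normal series $\OM^\pm_{2m}(q) \subset \SO^\pm_{2m}(q) \subset \GO^\pm_{2m}(q) \subset \CSO^\pm_{2m}(q) \subset \CO^\pm_{2m}(q)$, in which all indices are $2$ or $q-1$ (hence coprime to $p$), Lemma \ref{L-tree} shows all these principal blocks have similar Brauer trees, and a centralizer computation as in Lemma \ref{GO-SO} (exhibiting a determinant-$-1$ matrix, e.g. a reflection, that centralizes $P$) should identify the edge counts of $\SO^\pm$ and $\GO^\pm$. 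So it suffices to bound $e = |N_G(P)/C_G(P)|$ for $G = \GO^\pm_{2m}(q)$ from below.

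As before, the structure of a cyclic Sylow $p$-subgroup $P$ is governed by $d = \ord_p(q)$ via Stather's table \cite{Stat}. I would split into the two cases. When $d$ is even, $P$ is a Sylow $p$-subgroup of the ambient $\GL_{2m}(q)$, so cyclicity forces $m < d \le 2m$, and Stather's normalizer description (\cite[Lemma 4.5]{Stat}) gives $e \ge d$; since $m \ge 4$ this yields $d \ge 5$, hence $e \ge 5 > 4$ — except possibly the boundary value $d$ slightly above $m$. When $d$ is odd, $|P|$ equals the order of a cyclic Sylow $p$-subgroup of $\GL_{\lfloor m/2\rfloor \cdot \text{(something)}}$-type factor — more precisely, writing $n=2m$, $P$ comes via an embedding of a $\GL_k(q)$ (with $m/2 < d \le$ roughly $m/2$, analogous to the $\SO_{2m+1}$ and $\Sp_{2m}$ cases) — and expanding the order-$d$ normalizing element of $\GL_d(q)$ diagonally into $\GO^\pm$ gives $e \ge d$; cyclicity then forces $d$ large enough that $e > 4$ once $m \ge 4$. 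In each case I would also check the block-diagonal embedding $\GO^\pm_{2d}(q) \times \GO^\mp_{2m-2d}(q) \hookrightarrow \GO^\pm_{2m}(q)$ (or the appropriate type combination) so that Lemma \ref{e_subgroup} lets me replace $2m$ by $2d$ and reduce to the smallest relevant group.

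The main obstacle, exactly as in Theorem \ref{T-ort}, will be the finitely many boundary cases where the generic lower bound gives only $e = 4$: concretely the situations $d = 4$ with $m = 4$ (so $P \mid q^2+1$-part, $G$ of $\pm$-type in dimension $8$) and $d = 3$ with small $m$ ($p \mid q^2+q+1$). There one cannot argue by a crude order estimate. By Lemma \ref{independ_q} the edge count depends only on $d$, so I would pin it down by an explicit computation at the smallest field — $q = 3$, $p = 13$ for $d = 3$; $q = 2$ (or $q = 3$, $p = 5$) for $d = 4$ — using GAP \cite{gap} to compute $|N_G(P)/C_G(P)|$, or, when that gives exactly $4$, reading off the character degrees modulo $p$ from \cite{Car} to locate the exceptional vertex and verify it is not the central one; this is the same device used to settle $\SO_7(q)$ with $d = 4$. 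Assembling: in every case $\tau_0(G)$ is a line that is either too long or has its exceptional vertex off-centre, so it is not a star, and by Proposition \ref{some_prop}(3) together with the similarity of all the trees in the normal series, none of $\POM^\pm_{2m}(q), \OM^\pm_{2m}(q), \SO^\pm_{2m}(q), \GO^\pm_{2m}(q)$ lies in $\mathfrak{X}_p$.
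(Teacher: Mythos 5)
Your proposal follows essentially the same route as the paper: reduce along the normal series to $\GO^\pm_{2m}(q)$ using Lemmas \ref{polyg}, \ref{L-tree}, \ref{e_subgroup} and a Lemma~\ref{GO-SO}-type centralizer argument, bound $e=|N_G(P)/C_G(P)|\geq d$ via Stather's table and diagonal embeddings according to the parity of $d$, and settle the boundary cases ($d=3$ with small $m$, and $d=m=4$ in the minus type) by explicit computation justified by Lemma \ref{independ_q}. The only cosmetic difference is that for $m=d=4$ in type $-$ the paper simply computes with GAP that $\POM^-_8(q)$ itself has $4$ edges, rather than locating the exceptional vertex as you suggest; both devices work.
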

\begin{proof}
As usual, we exclude the case $p=2$, and also when $p$ divides $q$ or $q-1$. Thus, we may assume that $d\geq 2$, and a Sylow $p$-subgroup $P$ of $G$ is cyclic and non-trivial. 

First, assume that $q$ is odd. By Fact \ref{polyg}, $\tau_0(\CSO^\pm_n(q))$ is a line. By Lemma \ref{L-tree}, the Brauer trees of the principal blocks of $\SO^\pm_n(q)$ and $\OM^\pm_n(q)$ are similar to $\tau_0(\CSO^\pm_n(q))$, hence they are lines, too. Moreover, $\tau_0(\GO^\pm_n(q)) = \tau_0(\SO^\pm_n(q))$, because they have the same number of edges. 
Also $\tau_0(\POM^\pm_n(q))$ coincide with  $\tau_0(\OM^\pm_n(q))$ because $Z \subseteq O_{p'}(\OM^\pm_n(q))$.
Therefore, $e_0(\GO^\pm_n(q)) > 4$ implies $e_0(\POM^\pm_n(q)) > 2$.

Now assume that $q$ is even. Then $\POM^\pm_n(q) \cong \OM^\pm_n(q)$. By Fact \ref{polyg}, the Brauer tree of the principal block of $\GO^\pm_n(q) \cong \SO^\pm_n(q)$ is a line. And, by Fact \ref{L-tree}, the tree $\tau_0(\OM^\pm_n(q))$ is similar to $\tau_0(\GO^\pm_n(q))$.

Thus, for both odd and even $q$, it suffices to show that the number of edges in $\tau_0(\GO^\pm_n(q))$ exceeds $4$.

We first consider the group $G = \GO^+_n(q)$. Since a Sylow $p$-subgroup $P$ of $G$ is cyclic, it gives that $d\leq 2m-2$. According to \cite[Table 1]{Stat}, we have the following possibilities for $P$.

(1.1) \emph{Odd $d$}. Then $P$ can be chosen in $\GL_m(q)$. Since $P$ is non-trivial and cyclic, we obtain $m/2< d\leq m$. Using the diagonal embedding $A\mapsto \bsm A&0\\ 0&A^{-t}\esm$ from $\GL_m(q)$ into $\GO^+_{2m}(q)$, we obtain $e\geq d$. 

If $m\geq 8$, then $e > 4$, as desired. Since $e\geq d$, it remains to consider only cases when $d=3$ and $m \in \{4,5\}$. By Lemma \ref{independ_q}, the shape of $\tau_0(\GO^+_m(q))$ depends only on $d$ and $m$.
Taking $p=7$ and $q=2$, we have found with GAP that $e = 6$ for both $m \in \{4,5\}$.

(1.2) \emph{Even $d$ is even}. The inequality $d\leq 2m-2$ implies that the integer part of the fraction $d/2m$ equals zero; hence, $P$ can be chosen as a subgroup of $\GL_{2m}(q)$. Since $P$ is cyclic and non-trivial, we obtain $m< d$. Now, using \cite[Lemma 4.6]{Stat}, we get $e\geq d$. Then $d> 4$ yields $e> 4$, as desired.

Now consider $G = \GO^-_n(q)$. Since a Sylow $p$-subgroup $P$ of $G$ is cyclic, we have $2\leq d\leq 2m$. Notice that $d=2m$ may occur, when $p \mid q^m+1$. We will consider possibilities for $P$, according to \cite[Table 1]{Stat}.

(2.1) \emph{Odd $d$}. Then $P$ can be chosen as a subgroup of $\GL_{m-1}(q)$. Since $P$ is cyclic, we conclude that $(m-1)/2< d$. By \cite[Lemma 4.6]{Stat}, we have $e\geq d$.

If $m \geq 7$, then $e\geq d\geq 5$, as desired. We should consider only $m \in \{4,5,6\}$ and $d=3$. In all these cases, we have found with GAP that $e=6$. 

(2.2) \emph{Even $d$}. This case splits into two subcases.

If the integer part of $d/2m$ is odd, then $d\leq 2m$ yields $d=2m$. From \cite[Table 1]{Stat} we see, that $P$ can be chosen as a subgroup of $\GL_{2m}(q)$. Since $P$ is cyclic, we conclude that $m< d$ yields $d\geq 5$. Using \cite[Lemma 4.6]{Stat}, we conclude that $e\geq d\geq 5$, as desired.

Otherwise, the integer part of $d/2m$ is zero. Hence, $P$ can be chosen as a subgroup of $\GL_{2m-2}(q)$. Since $P$ is cyclic, we get $m-1< d$. If $m > 4$, it follows from \cite[Lemma 4.6]{Stat} that $e\geq d > 4$.

For the remaining case when $m=d=4$, we have found with GAP that the number of edges in the principal blocks of both groups $\GO^-_{8}(q)$ and  $\POM^-_{8}(q)$ are equal $4$. This completes the proof.
\end{proof}

\section{Exceptional groups of Lie type} 

In this section, we consider the finite exceptional groups of Lie type, namely  $E_6$, $E_7$, $E_8$, $F_4$, $G_2$ and twisted groups $^2 B_2$, $^3 D_4$, $^2E_6$, $^2 F_4$, $^2 G_2$. The last two types are called Ree groups. The groups of the type ${}^2 B_2$ are known as Suzuki groups. 

All these groups are simple except $G_2(2)$ and $^2 F_4(2)$. The group $G_2(2)$ has order $2^6 \cdot 3^3 \cdot 7$, and its derived subgroup $G_2(2)' \cong \mbox{PSU}(3,3)$ is simple of order $2^5 \cdot 3^3 \cdot 7$. 

The group $^2F_4(2)$ has the derived subgroup $^2F_4(2)'$ which is simple and called the Tits group. 

\begin{proposition}
Let $G = {}^2 F_4(2)'$ and $p$ divides $|G|$. Then $G \notin \mathfrak{X}_p$.
\end{proposition}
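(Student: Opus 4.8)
The strategy is the same as for the other small exceptional groups of Lie type: the Tits group ${}^2F_4(2)'$ is a fixed finite group, so the whole question is a finite computation over the primes $p$ dividing $|{}^2F_4(2)'| = 2^{11}\cdot 3^3\cdot 5^2\cdot 13$. First I would dispose of $p=2$ and $p=3$: since ${}^2F_4(2)'$ is simple (hence nonsolvable), Lemma~\ref{2-sylow} shows a Sylow $2$-subgroup is not cyclic, and a glance at the order $3^3$ together with the known subgroup structure shows a Sylow $3$-subgroup is not cyclic either; in both cases the hypothesis of $\mathfrak{X}_p$ fails outright. This leaves $p\in\{5,13\}$, where the Sylow $p$-subgroup is cyclic of order $p$, so the Brauer tree of the principal $p$-block is genuinely defined and Lemma~\ref{e} applies: $e_0(G) = |N_G(P)/C_G(P)|$ and the exceptional multiplicity is $(p-1)/e_0(G)$.

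For $p=13$ the quickest route is Lemma~\ref{tree-prop}: compute $|N_G(P)/C_G(P)|$ from the \texttt{ATLAS} data for ${}^2F_4(2)'$ (the normalizer of a Sylow $13$-subgroup has the form $13\rtimes 12$, so $e_0(G)=12$ is even — that test is inconclusive by itself), but in fact it is cleaner to invoke the published Brauer tree. Hiss and Lux \cite{H-L} list the Brauer tree of the principal $13$-block of ${}^2F_4(2)'$ explicitly, and it has diameter greater than $2$ (indeed $e_0(G)=12$ with a non-trivial real stem that is a line on several vertices), so $\tau_0(G)$ is not a star and $G\notin\mathfrak{X}_{13}$. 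For $p=5$ one proceeds identically: the normalizer of a Sylow $5$-subgroup gives $e_0(G)=|N_G(P)/C_G(P)|$, and either the tree in \cite{H-L} or the $5$-modular decomposition matrix in \cite{Bre} exhibits a path of length $\geq 3$ in $\tau_0(G)$, so $G\notin\mathfrak{X}_5$. (If $|C_G(P)|$ turns out to be odd in one of these cases, Lemma~\ref{tree-prop}(2) gives an even faster contradiction, since ${}^2F_4(2)'$ has a single class of involutions only if one checks the \texttt{ATLAS}; but the direct appeal to the tabulated trees avoids that case distinction.)

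The only genuine content, therefore, is reading off two Brauer trees (for $p=5$ and $p=13$) from the literature, or equivalently two decomposition matrices, and observing in each case that the tree contains a path on four vertices. I expect the main obstacle — such as it is — to be purely bookkeeping: confirming that the trees recorded in \cite{H-L} for ${}^2F_4(2)'$ at $p=5,13$ are complete and unambiguous (the Tits group is small enough that this is not in doubt), and making sure the count $e_0(G)=|N_G(P)/C_G(P)|$ agrees with Lemma~\ref{e} so that the exceptional vertex is correctly placed. Once that is checked, the conclusion $G\notin\mathfrak{X}_p$ for every prime $p\mid|G|$ is immediate.
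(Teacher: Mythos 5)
There is a genuine error in your treatment of $p=5$. You assert that for $p\in\{5,13\}$ the Sylow $p$-subgroup of $G={}^2F_4(2)'$ is cyclic of order $p$, but the order you yourself quote contains $5^2$, and in fact a Sylow $5$-subgroup of the Tits group is elementary abelian of order $25$, not cyclic: $G$ has a maximal subgroup isomorphic to $\PSL_2(25)$ (and also one of shape $5^2{:}4A_4$), whose Sylow $5$-subgroup is $C_5\times C_5$ and already accounts for the full $5$-part of $|G|$. Consequently $G\notin\mathfrak{X}_5$ for the trivial reason that the defining hypothesis of $\mathfrak{X}_p$ (non-trivial \emph{cyclic} Sylow subgroup) fails; there is no Brauer tree of the principal $5$-block with cyclic defect to be found in \cite{H-L} or reconstructed from \cite{Bre}, so the verification you propose for $p=5$ would simply come up empty rather than produce a long path. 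This is exactly how the paper argues: the maximal subgroups $\PSL_3(3).C_2$ and $\PSL_2(25)$ show at once that the Sylow $p$-subgroups are non-cyclic for $p=2,3,5$, leaving only $p=13$ to be settled via the decomposition matrices \cite{Bre}.

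Your $p=13$ argument is in substance the same as the paper's (consult the published modular data and observe the tree is not a star), though two details deserve correction: the parenthetical claim that $N_G(P)\cong 13{\rtimes}12$ is doubtful, since the Sylow $13$-normalizer lies in a maximal subgroup $\PSL_3(3).C_2$, which bounds $|N_G(P)/C_G(P)|$ by $6$ (you do not rely on this, but it should not be stated as fact); and the paper sources this case from the decomposition matrices \cite{Bre} rather than from \cite{H-L}, so if you cite the latter you should check that the Tits group is actually covered there. With the $p=5$ case repaired as above, the rest of your plan goes through.
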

\begin{proof}
The order of $G$ is $2^{11}\cdot 3^3\cdot 5^2\cdot 13$. Since $G$ contains maximal subgroups $\PSL(3,3).C_2$ and $\PSL(2,25)$ (see \cite[p. 74]{ATLAS}), we conclude that a Sylow $p$-subgroup of $G$ is not cyclic for $p= 2, 3, 5$.

If $p= 13$, then it follows from decomposition matrices \cite{Bre} that $\tau_0(G)$ is not a star. 
\end{proof}

\begin{proposition}
Let $G$ be a simple group of any type $^3 D_4$, $E_6$, $^2E_6$, $E_7$, $E_8$, $F_4$, $^2 F_4$ or $G_2$. Then $G \notin \mathfrak{X}_p$ for any $p$ dividing the order of $G$.
\end{proposition}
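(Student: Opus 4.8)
The plan is to reduce, for each of the exceptional types listed, to the statement that whenever a Sylow $p$-subgroup $P$ of $G$ is cyclic and non-trivial, the Brauer tree of the principal $p$-block has a vertex of valency at least $3$ or a path of length at least $3$, so $\tau_0(G)$ is not a star. As usual (Lemmas \ref{2-sylow} and \ref{p_div_q}) we may assume $p>2$ and $p\nmid q$, and we set $d=\ord_p(q)$ (for the twisted types $^2\!E_6$, $^2\!F_4$, $^3D_4$ one uses the appropriate order of $q$ with respect to the relevant cyclotomic factor). By Lemma \ref{e} the number of edges in $\tau_0(G)$ equals $|N_G(P)/C_G(P)|$, so it suffices to show this index is at least $3$ and that $\tau_0(G)$ is not a star on that many edges. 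Since $P$ cyclic forces $p$ to divide exactly one cyclotomic polynomial value $\Phi_d(q)$ in the order formula with multiplicity one, $d$ must be one of the ``regular'' numbers for the Weyl group $W$ of the ambient type; for each type these $d$ are a short explicit list (the degrees $d$ for which $\Phi_d$ occurs to the first power in $|G|$).

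The key steps, carried out type by type, are then: (i) list the admissible $d$ for which $P$ is cyclic and non-trivial — these are tabulated in the literature on $\ell$-blocks of exceptional groups (Hiss, Lübeck, Malle; see the Brauer-tree tables in \cite{H-L}); (ii) for each such $(G,d)$ quote the explicit Brauer tree of the principal block from \cite{H-L} (the principal-block trees for all exceptional groups of Lie type and all such $d$ are known and displayed there) and observe directly that it is not a star, i.e. it has diameter at least $3$; (iii) in the finitely many small residual cases not covered by generic tables, use Lemma \ref{tree-prop} together with the orders of $C_G(P)$ and $N_G(P)$ (available from the known maximal tori / generic conjugacy-class data, or computable in GAP \cite{gap}) to rule out the star shape — for instance when $|N_G(P)/C_G(P)|$ is odd, or when $|C_G(P)|$ is odd while $G$ has more than one class of involutions.

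The main obstacle is the sheer bookkeeping: one must enumerate, for each of the nine exceptional types, every value of $d$ giving a non-trivial cyclic Sylow (there are on the order of a dozen per large type $E_7$, $E_8$), and for each confirm the tree has diameter $>2$. Most of these are immediate from the published trees, which visibly contain a subtree of diameter $3$ (the real stem alone is typically already a line with $\ge 3$ edges, since the number of edges $e=|N_G(P)/C_G(P)|$ equals one of the regular numbers $d$ of $W$, and for exceptional $W$ these are $\ge 3$ except for the $d\le 2$ cases, which either give non-cyclic $P$ or are excluded by $p\nmid q,q-1$). The genuinely delicate points are exactly the analogues of the small cases encountered for classical groups — a handful of pairs $(G,p)$ where $e=4$ and one must check whether the exceptional vertex sits at the centre of a potential star — and these are settled either by inspecting the character degrees modulo $p$ in the known tree (as in Theorem \ref{T-ort}) or by a direct GAP computation of $N_G(P)/C_G(P)$ and the decomposition matrix \cite{Bre}. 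Having disposed of all of these, we conclude $G\notin\mathfrak{X}_p$ for every simple $G$ of type $^3D_4$, $E_6$, $^2\!E_6$, $E_7$, $E_8$, $F_4$, $^2\!F_4$ or $G_2$, which is the assertion of the proposition.
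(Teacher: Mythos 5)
Your overall skeleton does match the paper's: reduce to $p>2$, $p\nmid q$, then go type by type over the admissible $d$, showing either that a Sylow $p$-subgroup is not cyclic or that the known Brauer tree of the principal block is not a star, with a few residual cases settled by ad hoc computation. However, the shortcut you offer in place of the case analysis does not work, and it is precisely where the content lies. You argue that the published trees are visibly of diameter $\geq 3$ because ``the real stem alone is a line with $\geq 3$ edges, since $e=|N_G(P)/C_G(P)|$ equals one of the regular numbers $d\geq 3$.'' Both halves fail: $e$ is the order of the relative Weyl group of the Sylow $\Phi_d$-torus and in general differs from $d$ (for ${}^3D_4(q)$ with $p\mid q^4-q^2+1$ one has $d=12$ but $e=4$), and, more importantly, a large $e$ says nothing about the diameter — a star can have arbitrarily many edges, and the real stem may be much shorter than $e$ because non-real vertices are removed. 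So the proof really does have to invoke the specific shape results: Geck for ${}^3D_4$ \cite{Geck91} and for the unipotent blocks of $E_7$, $E_8$ \cite{Geck92}, Hiss--L\"ubeck--Malle for $E_6$ \cite{HLM}, Hiss--L\"ubeck for $F_4$ and ${}^2E_6$ \cite{H-L_F4}, Hiss for ${}^2F_4$ \cite{His91}, and Shamash for $G_2$ \cite{Sham89b}. Your citation \cite{H-L} is the book on Brauer trees of \emph{sporadic} groups and contains none of these trees, so as written the decisive step rests on a source that does not support it.

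The second gap is the non-cyclic-Sylow cases. They are not ``excluded by $p\nmid q,\,q-1$'' — no such general reduction is available for exceptional groups — and they include values of $d$ larger than $2$ (e.g.\ $d\in\{1,6\}$ for $E_6$, where one must know that no unipotent block has cyclic defect group) as well as all the $p=3$ situations for $E_6$, ${}^2E_6$, $E_7$, $F_4$ and $G_2$. The paper disposes of these with explicit maximal tori from \cite{D-F} (e.g.\ $C_{q+1}^2\times C_{q^2-1}^2$ in $E_6$, $T_{34}$, $T_{36}$, $T_{57}$, $T_{61}$, $T_{67}$ in $E_8$, and tori obtained by the substitution $q\mapsto -q$ for the twisted and $d$-reflected cases) and, for $F_4$ with $p=3$, with conjugacy-class data from \cite{GLS}. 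Without carrying out this enumeration — and without the correct tree references above — the proposal remains a plan whose supporting claims, as stated, are false, rather than a proof of the proposition.
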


\begin{proof}
Assume that $G \in \mathfrak{X}_p$ for some $p$ dividing $|G|$. Then Sylow $p$-subgroup $P$ of $G$ is cyclic and $p\neq 2$, $p \nmid q$. 
Let $d = \ord_p(q)$. In particular, $d$ divides $p-1$. We will consider groups from the list one by one.

\begin{enumerate}

\item Let $G= {}^3D_4(q)$, where $q$ is a prime power. It has the order
$$
|^3D_4(q)|= q^{12}(q^2-1)^2 (q^4- q^2+1) (q^4+ q^2+1)^2\,.
$$

Since $P$ is cyclic, it follows from \cite[Prop.~5.6c and Table~1.1]{D-M} that $P$ is a subgroup of the maximal torus $T_5= C_{q^4- q^2+1}$, i.e. $p$ divides $q^4- q^2+ 1$. In this case according to \cite[p.~3265]{Geck91}, the Brauer tree of $B_0$ is a line with 4 edges; hence, $\tau_0(G)$ is not a star, a contradiction.

\item Let $G= E_6(q)$. Looking at the order
$$
|E_6(q)|= \frac 1{(3, q-1)}\cdot q^{36} (q^2-1)(q^5-1)(q^6-1)(q^8-1)(q^9-1)(q^{12}-1)\,, 
$$

we see that the possible values for $d$ are $1, \dots, 6, 8, 9, 12$.

If $p=3$, then $p$ divides either $q+1$ or $q-1$. According to \cite[p. 897]{D-F}, $G$ contains the maximal torus $C_{q+1}^2\times C_{q^2-1}^2$, so Sylow 3-subgroups of $G$ are not cyclic.

Assume now that $p > 3$. If $d \in \{ 1, 6 \}$, then by \cite[Theorem 3.1]{HLM} there are no unipotent blocks with cyclic defect group. Note that the principal $p$-block is always unipotent, and its cyclic defect group is a Sylow $p$-subgroup of $G$.

For $d \in \{ 3,4,5,8,9,12 \}$, according to \cite[Theorem 3.1]{HLM}, there are no unipotent blocks whose Brauer tree is a star. 

If $d = 2$, then $p \mid q+1$. In this case, $P$ is not cyclic because $G$ contains a subgroup which is isomorphic to $C_{q+1} \times C_{q+1}$.

\item  Let $G= {}^2E_6(q)$. Since
$$
|^2E_6(q)| = \frac 1{(3, q+1)}\cdot q^{36} (q^2-1)(q^5+1)(q^6-1)(q^8-1)(q^9+1)(q^{12}-1)\,,
$$
the possible values for $d$ are $1, 2, 3, 4, 6, 8, 10, 12, 18$.

Again we first consider the case $p> 3$. If $d \in \{ 2, 3 \}$, by \cite[Theorem~2.2]{H-L_F4} the group $G$ has no unipotent blocks with cyclic defect group. If $d \in \{ 4, 6, 8, 10, 12, 18 \}$, then there are no unipotent blocks whose Brauer tree is a star and whose defect group is cyclic.

Finally, assume that $d=1$, i.e. $p \mid q-1$. According to \cite[p.~903]{D-F}, the maximal tori of $G$ can be obtained from the corresponding list for $E_6(q)$ by a formal substitution $q\mapsto -q$. 
It follows from \cite[p.~897]{D-F} that $G$ contains the maximal torus $T_{11}= C_{q^2-1}\times C_{q^4-1}$. Therefore, $P$ is not cyclic.

Thus, it remains to consider the case $p=3$. Since $q\equiv \pm 1 \pmod 3$, we conclude that $P$ is not cyclic by considering the same torus as above.

\item Let $G= E_7$. Since
$$
|E_7(q)| = \frac 1{(2, q-1)}\cdot q^{63} (q^2-1)(q^6-1)(q^8-1)(q^{10}-1)(q^{12}-1)(q^{14}-1)(q^{18}-1)\,,
$$
the possible values for $d$ are $1, \dots, 10, 12, 14, 18$.

If $d \in \{ 5, 7, 8, 9, 10, 12, 14, 18 \}$, then it follows from \cite[Theorem~12.6, and remark on p.~2970]{Geck92} that all unipotent blocks with cyclic defect group have Brauer tree in the shape of a line of length $e \geq 4$.

For remaining values of $d$, considering the maximal tori of $G$, it is easy to see that a Sylow $p$-subgroup of $G$ is not cyclic. 
Indeed, we can take the torus $T_{10}= C_{q-1}\times C_{q^3-1}^2$ (see \cite[p.~898]{D-F}) for $d \in \{ 1, 3 \}$, 
the torus $T_8= C_{q-1}\times C_{q+1}^2\times C_{q^2-1}^2$ for $d= 2$, 
and $T_{28}= C_{(q-1)(q^2+1)}\times C_{q^2-1}\times C_{q^2+1}$ for $d=4$. 
Finally, for $d=6$ replacing $q$ by $-q$ in $T_{10}$, we see that the group contains the torus $C_{q+1}\times C_{q^3+1}^2$.

Note that the case $p=3$, when either $p\mid q+1$ or $p\mid q-1$, is already included into consideration above.

\item Let $G= E_8$. The order is
$$
|E_8(q)| = q^{120} (q^2-1)(q^8-1)(q^{12}-1)(q^{14}-1)(q^{18}-1)(q^{20}-1)(q^{24}-1)(q^{30}-1)\,.
$$

Hence the possible values for $d$ are $1, \dots, 10, 12, 14, 15, 18, 20, 24, 30$.

If $d \in \{ 7, 9, 14, 15, 18, 20, 24, 30 \}$, then it follows from \cite[Thm. 12.7]{Geck92} that the Brauer trees of all unipotent blocks with cyclic defect group are not stars.

For remaining values of $d$, a Sylow $p$-subgroup of $G$ is not cyclic because $G$ contains the following tori (see \cite[p.~899--901]{D-F}): $T_{34}= C_{(q+1)(q^3-1)}^2$ for $d = 1, 2, 3$; $T_{36}= C_{q^4-1}^2$ for  $d = 4$; $T_{57}= C_{q^4+q^3+q^2+q+1}^2$ for $d = 5$; $T_{61}= C_{q^4+1}^2$ for $d = 8$; and $T_{67}= C_{q^4- q^2+1}^2$ for $d = 12$. For $d = 6$, we can take the torus $C_{(q^4-q^2+1)(q^2-q+1)}\times C_{q^2-q+1}$ that obtained from $T_{62}$ by plugging $q \mapsto -q$. And for $d = 10$, we obtain the torus $C_{q^4-q^3+q^2-q+1}$ from $T_{57}$ by the same substitution.

\item Let $G= F_4(q)$. Then 
$$
|F_4(q)| = q^{24} (q-1)^4 (q+1)^4 (q^2-q+1) (q^2+1)^2 (q^2+q+1) (q^4- q^2+ 1) (q^4+1) (q^4+ q^2+ 1)\,,
$$

The possible values for $d$ are $1, 2, 3, 4, 6, 8, 12$.

First assume that $p > 3$. If $d \in \{ 1, 2, 3, 6 \}$, then it follows from \cite[Theorem~2.1]{H-L_F4} that there are no unipotent blocks with cyclic defect group. 
If $d \in \{ 4, 8, 12 \}$, then by the same reference the Brauer tree of unipotent blocks is not a star.

Now suppose that $p=3$. It follows from \cite[Table~4.7.3]{GLS} that $G$ has three conjugacy classes of $3$-elements. Hence, Sylow $3$-subgroups of $G$ are not cyclic.

\item Let $G= {}^2F_4(q^2)$, where $q^2= 2^{2n+1}$, $n > 0$. Then $G$ is a simple group of the order
$$
q^{24} (q^2-1)^2 (q^2+1)^2 (q^4- q^2+1) (q^4+1)^2 (q^4\pm \sqrt{2} q^3+ q^2\pm \sqrt{2} q+1)\,.
$$

If $P$ is cyclic, then it follows from \cite[Section~4.2]{His91} that $p> 3$ and $p$ divides
$q^4- q^2+ 1$ or $q^4\pm \sqrt{2} q^3+ q^2\pm \sqrt{2} q+1$. According to \cite[Theorems 4.5--4.7]{His91}, in each of these cases $\tau_0(G)$ is not a star.

\item Let $G= G_2(q)$, where $q \geq 3$. This group is simple, and its order is 
$$
|G_2(q)|= q^6 (q^2-1)^2(q^2-q+1)(q^2+q+1)\,.
$$

If $p$ divides $q\pm 1$, then the Sylow $p$-subgroup of $G$ is not cyclic, because $G$ contains the tori $C_{q\pm 1}^2$ (see \cite[p.~1902]{Sham89a}).

Suppose that $p$ divides $q^2\pm q+1$. If $p\neq 3$, then by \cite[p.~380--381]{Sham89b} $\tau_0(G)$ is not a star.

Now assume that $p=3$. If $3$ divides $q^2+q+1$, then $q\equiv 1\pmod 3$, and this case has been already considered above. Similarly, $3\mid q^2-q+1$ leads to $3\mid q+1$.

\end{enumerate}
\end{proof}

Now consider the series of groups ${}^2 G_2(q^2)$, where $q^2 = 3^{2n+1}$. They are simple if and only if $n \geq 1$. The order is
$$|{}^2 G_2(q^2)| = q^6(q^2-1)(q^6+1) = q^6(q^2-1)(q^2+1)(q^2+ \sqrt{3} q+1)(q^2-\sqrt{3} q+1)\,.$$

\begin{proposition}
Let $G= {}^2 G_2(q^2)$, where $q^2 = 3^{2n+1}$, $n \geq 1$. Then $G \in \mathfrak{X}_p$ if and only if $p>2$ and $p$ divides either $q^2-1$ or $q^2+ \sqrt{3} q+1$.
\end{proposition}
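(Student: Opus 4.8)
The plan is to follow the same template used for the other small-rank groups of Lie type in this section, namely: determine for which primes $p$ a Sylow $p$-subgroup of $G = {}^2G_2(q^2)$ is cyclic, and then for each such $p$ decide whether $\tau_0(G)$ is a star by consulting the known Brauer trees of the Ree groups. First I would recall from the factorization $|G| = q^6(q^2-1)(q^2+1)(q^2+\sqrt3 q+1)(q^2-\sqrt3 q+1)$ that we may immediately discard $p = 2$ (by Lemma \ref{2-sylow}, since $G$ is non-solvable) and $p = 3$ (by Lemma \ref{p_div_q}, since $G \not\cong \PSL_2(q)$ and $p \mid q^2$), so $p$ divides exactly one of the four cyclotomic-type factors $q^2-1$, $q^2+1$, $q^2+\sqrt3 q+1$, $q^2-\sqrt3 q+1$. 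In each of these four cases a Sylow $p$-subgroup lies in the corresponding maximal torus and is cyclic, so unlike most of the earlier cases there is no cyclicity obstruction to dispose of; the whole content is in reading off the trees.

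Next I would invoke the explicit determination of the Brauer trees of the principal $p$-block of ${}^2G_2(q^2)$ — these are classical and due to Hiss (and Fong--Srinivasan / Landrock--Michler in the relevant ranges); I would cite the appropriate reference (analogous to \cite{His91} used for ${}^2F_4$, or Landrock--Michler for the Ree groups $^2G_2$). The four cases split as follows. When $p \mid q^2 - 1$, the relevant torus normalizer gives $e = |N_G(P)/C_G(P)| = 2$, so $\tau_0(G)$ has two edges and is automatically a star. When $p \mid q^2 + \sqrt3 q + 1$ (equivalently $p$ divides the order of one of the two conjugate tori of order $q^2 \pm \sqrt3 q + 1$), the Brauer tree is again known to be a star with a small number of edges — I would point to the explicit tree in the literature to confirm diameter $\le 2$. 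In the remaining two cases, $p \mid q^2 + 1$ and $p \mid q^2 - \sqrt3 q + 1$, the known Brauer trees are \emph{not} stars: the tree for $p \mid q^2+1$ is a line (or open polygon) with more than two edges, and similarly for $q^2 - \sqrt3 q + 1$ one finds a path of length at least $3$, so $G \notin \mathfrak{X}_p$ there. Combining the four cases yields exactly the stated criterion.

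I expect the main obstacle to be purely bibliographic/careful-bookkeeping rather than conceptual: one must locate the correct normalization of the Brauer trees of ${}^2G_2(q^2)$ in the literature, check that the labeling of the four torus cases there matches the factorization written above (in particular that ``$q^2 + \sqrt3 q + 1$'' versus ``$q^2 - \sqrt3 q + 1$'' is the one whose tree is a star, and not its sign-partner), and confirm that in the star cases the number of edges is genuinely $\le 2$ and not merely ``a short line.'' A subsidiary point is that Lemma \ref{tree-prop} can be used as a consistency check: in the non-star cases one can verify that the number of edges $e$ computed from $|N_G(P)/C_G(P)|$ is even as it must be, and that it exceeds $2$. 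Since ${}^2G_2(q^2)$ is already simple for $n \ge 1$ there is no passage to a quotient or central extension to worry about, which is why this proposition is cleaner than the orthogonal or $E_\ell$ cases. I would write the proof as a short case analysis mirroring the structure of the preceding proof for the other exceptional types, citing the trees in each case and concluding.
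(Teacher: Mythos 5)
Your route is the same as the paper's: discard $p=2$ and the defining characteristic, observe that in each of the four remaining cases a Sylow $p$-subgroup is cyclic inside the corresponding maximal torus, and read the principal-block trees off the known determination of the Brauer trees of the Ree groups; the right citation is \cite{His91} (Theorems 4.1--4.4), which is exactly what the paper uses. The one substantive point of comparison is the case assignment. The paper's printed proof asserts that the star cases are $p\mid q^2+1$ and $p\mid q^2-\sqrt3 q+1$ and the non-star cases are $p\mid q^2-1$ and $p\mid q^2+\sqrt3 q+1$ --- the exact opposite of the proposition being proved and of item 5 of Theorem \ref{t-main}. Your assignment is the correct one, and the swap is a slip in the paper's proof rather than a gap in yours: for $p\mid q^2-1$ the relative Weyl group of the torus $C_{q^2-1}$ has order $2$, so $e=|N_G(P)/C_G(P)|=2$ and the tree is automatically a star (this alone shows the printed pairing cannot be right); for $p\mid q^2+\sqrt3 q+1$ the tree is the star with the Steinberg character as hub, exactly as for $\Sz(q)$ with $p\mid q+r+1$; for $p\mid q^2+1$ the block contains $1$, the Steinberg character of degree $q^6$ and exceptional characters of degree $(q^2-1)(q^4-q^2+1)$, so no vertex can have degree equal to the sum of the degrees of all the other vertices and a star is impossible; and $p\mid q^2-\sqrt3 q+1$ is the Coxeter-torus case, whose tree has diameter larger than $2$.

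One small correction to your wording: in the two non-star cases the trees are not lines (open polygons) --- the complex-conjugate pairs of cuspidal unipotent characters branch off the real stem, just as for $\Sz(q)$ with $p\mid q-r+1$ --- but since all you need is that the diameter exceeds $2$, this does not affect your conclusion. Likewise, the attribution to Landrock--Michler is off (their work concerns other characteristics); Hiss \cite{His91} is the source for the primes needed here.
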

\begin{proof}
Again we may assume that $p \nmid q$ and $p > 2$. According to \cite[Theorems 4.1-4.4]{His91}, if $p$ divides $q^2+1$ or $q^2- \sqrt{3} q+1$, then $P$ is cyclic and $\tau_0(G)$ is a star. 
If $p$ divides $q^2+ \sqrt{3} q+1$ or $q^2-1$, then $P$ is cyclic, but $\tau_0(G)$ is not a star.

\end{proof}

Finally, we will consider the Suzuki groups ${}^2 B_2(q)$.

\begin{proposition}
Let $G = {}^2 B_2(q)$, where $q=2^{2n+1}$, $n \geq 1$, and let $r = 2^{n+1}$. Then $G \in \mathfrak{X}_p$ if and only if $p$ divides either $q-1$ or $q+r+1$.

\end{proposition}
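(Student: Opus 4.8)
The plan is to run through the primes $p$ dividing $|G|=q^2(q-1)(q^2+1)$ using the classical subgroup structure of the Suzuki groups. First I would dispose of $p=2$: since $G$ is simple non-abelian, a Sylow $2$-subgroup is non-cyclic by Lemma~\ref{2-sylow}, so $G\notin\mathfrak{X}_2$; and $p\mid q$ forces $p=2$ because $q$ is a $2$-power. So assume $p$ is odd. Writing $q^2+1=(q-r+1)(q+r+1)$ and using $r^2=2q$, hence $q=2^n r$, elementary gcd computations show that $q-1$, $q-r+1$, $q+r+1$ are pairwise coprime odd integers, so $p$ divides exactly one of them. Next I would recall Suzuki's description of $G$: it contains cyclic, self-centralizing, TI-subgroups $T_1\cong C_{q-1}$, $T_2\cong C_{q-r+1}$, $T_3\cong C_{q+r+1}$, with $N_G(T_1)$ dihedral of order $2(q-1)$ and $N_G(T_2)\cong T_2{:}C_4$, $N_G(T_3)\cong T_3{:}C_4$, and every nontrivial odd-order element has cyclic centralizer conjugate into one of the $T_j$. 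Hence, if $p\mid|T_j|$, a Sylow $p$-subgroup $P$ lies in $T_j$, is cyclic, and $C_G(P)=T_j$, $N_G(P)=N_G(T_j)$; by Lemma~\ref{e}, $e_0(G)=|N_G(P)/C_G(P)|$ equals $2$ when $p\mid q-1$ and $4$ when $p\mid q^2+1$.

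If $p\mid q-1$, then $\tau_0(G)$ has only three vertices, so it is a path of diameter $2$, i.e.\ a star, and $G\in\mathfrak{X}_p$. If $p\mid q^2+1$, then $e=4$, and I would first observe that the general obstructions do not decide between the two subcases $p\mid q-r+1$ and $p\mid q+r+1$: Blau's conditions (Lemma~\ref{tree-prop}) hold in both, since $e=4$ is even, $|C_G(P)|=|T_j|$ is odd, and all involutions of $G$ are conjugate (the Sylow $2$-normalizer $Q{:}C_{q-1}$ permutes the $q-1$ involutions of $Z(Q)=\Omega_1(Q)$ transitively). Therefore one must appeal to the explicit Brauer trees of $\Sz(q)$, which follow from Burkhardt's determination of the Suzuki decomposition matrices \cite{Burkhardt}. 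According to that work the Brauer tree of the principal $p$-block is a star precisely when $p\mid q+r+1$ — with the exceptional vertex at the centre, joined to the trivial and Steinberg characters and to the two characters of degree $r(q-1)/2$ — whereas for $p\mid q-r+1$ it is a line of diameter at least $3$, hence not a star. The fact that the exceptional vertex is central is also what makes the conclusion persist when $|P|>p$, since winding a star around its centre again produces a star. Combining the cases gives $G\in\mathfrak{X}_p$ if and only if $p\mid q-1$ or $p\mid q+r+1$.

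The step I expect to be the main obstacle is exactly this last one: the two subcases $p\mid q-r+1$ and $p\mid q+r+1$ agree in all numerical invariants (the number of edges, the order of $P$, the fusion of involutions, even $\ord_p(q)=4$ in both), and both satisfy every general necessary condition for a star, so there is no shortcut around the Suzuki decomposition matrices. As a consistency check I would confirm the prediction on $\Sz(8)$, where $q-1=7$, $q-r+1=5$, $q+r+1=13$: the claim is $\Sz(8)\in\mathfrak{X}_7$, $\Sz(8)\in\mathfrak{X}_{13}$ and $\Sz(8)\notin\mathfrak{X}_5$, which can be verified directly in GAP.
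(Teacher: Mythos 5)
Your argument is correct and follows essentially the same route as the paper: both reduce to odd $p$ (Sylow $2$-subgroups are non-cyclic) and then rest the decisive case distinction on Burkhardt's determination of the Brauer trees of $\Sz(q)$, which gives a star exactly when $p \mid q-1$ or $p \mid q+r+1$ and a tree of diameter $3$ when $p \mid q-r+1$; your torus/normalizer computation of $e$ and the check of Blau's conditions are additional but harmless scaffolding. One small correction of detail: for $p \mid q+r+1$ the centre of the star is the Steinberg character, with the trivial character, the two characters of degree $r(q-1)/2$, and the exceptional vertex as leaves (indeed $1 + r(q-1) + (q-1)(q-r+1) = q^2$), not the exceptional vertex at the centre — but this, and the ensuing ``winding'' remark, do not affect your conclusion, since the shape of the tree is independent of $|P|$ anyway.
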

\begin{proof}
Since $r^2 = 2q$, the order of $G$ can be written as 
$$|G| = q^2(q^2+1)(q-1) = q^2(q+r+1)(q-r+1)(q-1) \,.$$

It is known that $|G|$ is not divisible by $3$, and that a Sylow $p$-subgroup of $G$ is cyclic for $p>3$. The tree $\tau_0(G)$ is described in \cite{Burk79}. It is a star when $p$ divides either $q-1$ or $q+r+1$, and it is not a star when $p$ divides $q-r+1$.
\end{proof}

 
\vspace{2mm}
\emph{The work was supported by Russian Science Foundation (grant 18-71-10007).}

\end{document}